\newcommand{\thetitle}{Semimartingale detection and goodness-of-fit tests}
\newcommand{\forenames}{Adam D.}  \newcommand{\surname}{Bull}
\newcommand{\fullname}{\forenames\ \surname} \newcommand{\acks}{We
  thank EPSRC for their support under grant EP/K000993/1.  All
  research data was randomly generated using software given in
  \citet{bull_software_2016}}
\newcommand{\mscone}{62M02}
\newcommand{\msctwo}{60J60}
\newcommand{\mscthree}{60J75}
\newcommand{\mscfour}{62G10}
\newcommand{\mscfive}{65T60}
\newcommand{\themsc}{
\mscone\ (primary); \msctwo, \mscthree, \mscfour, \mscfive\ (secondary)}
\newcommand{\kwdone}{diffusion}
\newcommand{\kwdtwo}{goodness-of-fit}
\newcommand{\kwdthree}{jump process}
\newcommand{\kwdfour}{semimartingale}
\newcommand{\kwdfive}{wavelets}
\newcommand{\thekeywords}{\kwdone, \kwdtwo, \kwdthree, \kwdfour, \kwdfive}
\newcommand{\addressone}{Statistical Laboratory}%
\newcommand{\addresstwo}{University of Cambridge}%
\let\oldmarginpar\marginpar
\renewcommand\marginpar[1]{\-\oldmarginpar[\raggedleft\footnotesize 
#1]{\raggedright\footnotesize #1}}
\DeclarePairedDelimiter{\abs}{\lvert}{\rvert}
\DeclarePairedDelimiter{\norm}{\lVert}{\rVert}
\newcommand{\N}{\mathbb{N}}
\newcommand{\R}{\mathbb{R}}
\renewcommand{\P}{\mathbb{P}}
\newcommand{\E}{\mathbb{E}}
\newcommand{\F}{\mathcal{F}}
\newcommand{\Var}{\mathbb{V}\mathrm{ar}}
\newcommand{\Cov}{\mathbb{C}\mathrm{ov}}
\newtheorem{theorem}{Theorem}
\newtheorem{lemma}{Lemma}
\newtheorem{assumption}{Assumption}
\renewcommand{\p@enumii}{}
\newcommand{\tint}{\begingroup\textstyle\int\endgroup}
\newcommand{\tsum}{\begingroup\textstyle\sum\endgroup}
\newcommand{\tprod}{\begingroup\textstyle\prod\endgroup}
\newcommand{\tmax}{\begingroup\textstyle\max\endgroup}
\newcommand{\tmin}{\begingroup\textstyle\min\endgroup}
\newcommand{\tsup}{\begingroup\textstyle\sup\endgroup}
\newcommand{\mel}{\MoveEqLeft}
\newcommand{\bal}{\!\begin{aligned}[t]\mel[1]\mathopen{}}
\newcommand{\eal}{\end{aligned}}
\newcommand{\applink}{\autoref{sec:ptc}}
\title{\thetitle\footnotetext{\emph{Acknowledgements:} \acks.}\footnotetext{\emph{MSC 2010:} \themsc.}
\footnotetext{\emph{Keywords:} \thekeywords.}}
\author{\fullname\\\footnotesize \addressone\\\footnotesize \addresstwo}
\date{}
\begin{document}

\maketitle

\begin{abstract}
  In quantitative finance, we often fit a parametric semimartingale
model to asset prices.  To ensure our model is correct, we must then
perform goodness-of-fit tests. In this paper, we give a new
goodness-of-fit test for volatility-like processes, which is easily
applied to a variety of semimartingale models. In each case, we reduce
the problem to the detection of a semimartingale observed under
noise. In this setting, we then describe a wavelet-thresholding test,
which obtains adaptive and near-optimal detection rates.


\end{abstract}

\section{Introduction}
\label{sec:int}

In quantitative finance, we often model asset prices as
semimartingales; in other words, we assume prices are given by a sum
of drift, diffusion and jump processes. As these models can be
difficult to fit to data, we often restrict our attention to a
parametric class, of which many have been suggested by
practitioners. To verify our choice of parametric class, we must then
perform goodness-of-fit tests.

As semimartingale models can be quite complex, there are many
potential tests to perform. In the following, we will be interested in
testing whether models accurately describe processes such as the
volatility, covolatility, vol-of-vol or leverage.  We will further be
looking for tests which can be shown to obtain good rates of detection
against a variety of alternatives.

While many goodness-of-fit tests exist in the literature, fewer have
been shown to obtain good detection rates. Those tests which do
achieve good rates are generally designed for one type of
semimartingale model, and one way of measuring performance.

In the following, we will therefore describe a new goodness-of-fit
test for volatility-like processes in semimartingales. Our test can
easily be applied to a wide range of models, including stochastic
volatility, jumps and microstructure noise, and obtains good detection
rates against both local and nonparametric alternatives.

Our method involves reducing any goodness-of-fit test to one of
semimartingale detection: given a series of observations, is the
series white noise, or does it contain a hidden semimartingale? We
will show how this problem can be solved efficiently, obtaining
adaptive and near-optimal detection rates.

We now describe in more detail the problems we consider, as well as
relevant previous work. Our goal will be to test the goodness-of-fit
of a parametric semimartingale model. Many such models have been
described, including simple models such as Black-Scholes or
Cox-Ingersoll-Ross; L\'evy models such as the generalised hyperbolic
or CGMY processes; and stochastic volatility models such as the Heston
or Bates models. (For definitions, see \citealp{cont_financial_2004,
  papapantoleon_introduction_2008}.)

In the simplest case, where our observations are known to come from a
stationary or ergodic diffusion process, a great many authors have
described goodness-of-fit tests. We briefly mention some initial work
\citep{ait-sahalia_testing_1996, corradi_specification_1999,
  kleinow_testing_2002} as well as more recent discussion
\citep{gonzalez-manteiga_updated_2013,
  papanicolaou_variation-based_2014, chen_asymptotically_2015}.

In a financial setting, however, even if our model is stationary, we
may need to test it against non-stationary alternatives. When
observations can come from a non-stationary diffusion, goodness-of-fit
tests have been described using the integrated volatility
\citep{corradi_specification_1999}, estimated residuals
\citep{lee_bickelrosenblatt_2006, lee_residual_2008,
  nguyen_time-changed_2010} and marginal density
\citep{ait-sahalia_stationarity-based_2012}. Goodness-of-fit tests
also exist for regressions between diffusions
\citep{mykland_anova_2006}.

In the following, we will be interested in goodness-of-fit tests which
not only detect non-stationary alternatives, but also achieve good
detection rates.  In this setting, \citet{dette_test_2003} propose a
test which can detect misspecification of the volatility at a rate
\(n^{-1/4}\) in \(L^2\) norm \citep[see also][]{dette_estimation_2006,
  podolskij_range-based_2008, papanicolaou_variation-based_2014}.

A similar test proposed by \citet{dette_testing_2008} detects
alternatives in a fixed direction at the faster rate \(n^{-1/2},\)
although the authors do not give rates in \(L^p.\) This test can also
be applied to more complex models, including stochastic volatility
\citep{vetter_estimation_2012} and microstructure noise
\citep{vetter_model_2012}.

In some volatility testing problems, previous work has described tests
which achieve optimal detection rates against nonparametric alternatives
\citep{reis_nonparametric_2014, bibinger_nonparametric_2015}. However,
these tests are specific to the problems considered, and do not assess
the goodness-of-fit of general models.

In the following, we will therefore describe a new method of
goodness-of-fit testing for volatility-like processes. We will show
how our approach applies to a wide variety of semimartingale models,
including those with jumps, stochastic volatility and microstructure
noise. In each case, we will obtain adaptive detection rates, with
near-optimal behaviour not only against alternatives in a fixed
direction, but also against nonparametric alternatives.

To construct our tests, we will reduce each goodness-of-fit problem to
one of semimartingale detection: we will construct a series of
observations \(Z_i,\) which under the null hypothesis are
approximately white noise, and then test whether the \(Z_i\) contain a
hidden semimartingale \(S_t.\)

For example, suppose we have a semimartingale
\[dX_t = b_t\,dt + \sqrt{\mu_t}\,dB_t,\]
where \(B_t\) is a Brownian motion, \(b_t\) and \(\mu_t\) are
predictable processes, and we make observations \(X_{t_i},\) \(i = 0,
\dots, n,\) where the times \(t_i \coloneqq i/n.\) Further suppose we
have a model \(\mu(t, X_t)\) for the volatility, and wish to test the
hypotheses
\[H_0: \mu_t = \mu(t, X_t) \qquad \mathrm{vs.} \qquad H_1: \mu_t\
\mathrm{unrestricted.}\]

To estimate \(\mu_{t},\) we define the realised volatility estimates
\[Y_i \coloneqq n(X_{t_{i+1}}-X_{t_i})^2,\qquad i = 0, \dots, n-1.\]
Since the scaled increments \(\sqrt{n}(X_{t_{i+1}} - X_{t_i})\) are
approximately \(N(0, \mu_{t_i}),\) the observations \(Y_i\) have
approximate mean \(\mu_{t_i}\) and variance \(2\mu_{t_i}^2.\) Under
\(H_0,\) we thus have that the normalised observations
\[Z_i \coloneqq (Y_i - \mu(t_i, X_{t_i}))/\sigma(t_i, X_{t_i}), \qquad
\sigma^2 \coloneqq 2\mu^2,\] are approximately white noise.

Under \(H_1,\) we instead obtain
\begin{equation}
\label{eq:smp}
Z_i = S_{t_i} + \varepsilon_i,
\end{equation}
where the semimartingale
\[S_t \coloneqq (\mu_{t} - \mu(t, X_{t}))/\sigma(t, X_{t}),\]
and the approximately-centred noises
\[\varepsilon_i \coloneqq (Y_i - \mu_{t_i})/\sigma(t_i, X_{t_i}).\]
To test our hypotheses, we must therefore test whether the the series
\(Z_i\) is approximately white noise, or contains a hidden
semimartingale \(S_t.\)

If the noises \(\varepsilon_i\) were independent standard Gaussian,
independent of \(S_t,\) we could consider this a standard detection
problem in nonparametric regression. Conditioning on \(S_t,\) we could
take the semimartingale as fixed, and then apply the methods of
\citet{ingster_nonparametric_2003}, for example.

Under suitable assumptions on the process \(S_t,\) its sample paths
would be almost \(\tfrac12\)-smooth, and we would thus be able to
detect a signal \(S_t\) at rate \(n^{-1/4}\) in supremum norm, up to
log terms. Alternatively, if we wished to detect signals \(S_t \propto
e_t,\) for a fixed direction \(e_t,\) we could do so at a rate
\(n^{-1/2}.\)

In general, however, the signal \(S_t\) may depend on past values of
the noises \(\varepsilon_i,\) and vice versa. We will thus not be able
to appeal directly to results in nonparametric regression, and will
instead need to use arguments developed specifically for the
semimartingale setting.

In the following, we will show that testing problems
like~\eqref{eq:smp} can be solved with detection rates similar to
those of nonparametric regression. We will further show that many
semimartingale goodness-of-fit tests can be described in a form
like~\eqref{eq:smp}, including models with stochastic volatility,
jumps or microstructure noise.

Our approach will be similar to wavelet thresholding
\citep{donoho_wavelet_1995, hoffmann_adaptive_2010}; essentially, we
will reject the null whenever a suitable wavelet-thresholding estimate
of \(S_t\) is non-zero. While this method is known to work well in the
standard nonparametric setting, we will need to prove new results to
apply it to settings like~\eqref{eq:smp}.

Our proofs will use a Gaussian coupling derived from Skorokhod
embeddings. We note that as our results must apply in a general
semimartingale setting, we will not be able to use faster-converging
couplings, such as the KMT approximation. We will show, however, that
under reasonable moment bounds, a Skorokhod embedding will suffice to
achieve the desired detection rates.

Indeed, with this construction we will show our tests detect
semimartingales \(S_t\) at a rate \(n^{-1/4}\) in supremum norm, up to
log terms, even when \(S_t\) contains finite-variation
jumps. Furthermore, our tests will simultaneously detect simpler
signals at faster rates; for example, we will be able to detect
signals \(S_t\) in a fixed direction \(e_t\) at a rate \(n^{-1/2}\) up
to logs, without knowledge of the direction \(e_t.\)

We will finally show that in each case, the rates obtained are
near-optimal.  Applying our tests to problems like~\eqref{eq:smp}, we
will thus be able to construct goodness-of-fit tests for a wide
variety of semimartingale models, obtaining adaptive and near-optimal
detection rates.

The paper will be organised as follows. In \autoref{sec:det}, we give
a rigorous description of the problems we consider, and discuss
examples. In \autoref{sec:tes}, we then construct our tests, and state
our theoretical results. In \autoref{sec:emp}, we then give empirical
results, and in \autoref{sec:pro}, proofs.

\section{Semimartingale detection problems}
\label{sec:det}

We now describe our concept of a semimartingale detection problem. Our
setting will include volatility goodness-of-fit problems
like~\eqref{eq:smp}, as well as many other semimartingale
goodness-of-fit tests.

We begin with some examples of the problems we will consider. In each
case, we will describe a semimartingale model with a volatility-like
process \(\mu_t.\) We will wish to test the null hypothesis that
\(\mu_t\) is given by some known function \(\mu(\theta_0, t, X_t),\)
for an unknown paramter \(\theta_0 \in \Theta,\) and an estimable
covariate process \(X_t \in \R^q;\) our alternative hypothesis will be
that \(\mu_t\) is not given by \(\mu.\)

To test our hypothesis, we will construct \(\mathcal
F_{t_{i+1}}\)-measurable observations \(Y_i,\) and a variance function
\(\sigma^2.\) Under the null, and conditional on \(\mathcal F_{t_i},\)
the observations \(Y_i\) will have approximate mean and variance
\(\mu(\theta_0, t_i, X_{t_i})\) and \(\sigma^2(\theta_0, t_i,
X_{t_i}).\) To estimate these means and variances, we will further
construct estimates \(\widehat \theta\) and \(\widehat X_{i}\) of the
parameters \(\theta_0\) and covariates \(X_{t_i}.\)

We will then be able to estimate the difference between the
observations \(Y_i\) and their means \(\mu,\) scaled according to
their variances \(\sigma^2;\) we will reject the null hypothesis when
the size of these scaled differences are large. In \autoref{sec:tes},
we describe in detail how we perform such tests, as well as giving
theoretical results on their performance.

For now, we proceed with some examples of semimartingale
goodness-of-fit problems in this form.  Let \(B_t\) and \(B_t'\) be
independent Brownian motions, \(\lambda(dx, dt)\) be an independent
Poisson random measure with intensity \(dx\,dt,\) \(b_t\) and \(b_t'\)
be predictable locally-bounded processes, and \(f_t(x)\) be a
predictable function with \(\tint_\R 1 \wedge \abs{f_t(x)}^\beta\,dx\)
locally bounded, for some \(\beta \in [0, 1).\) Further define times
\(t'_i \coloneqq i/n^2.\)

We then have the following examples.

\begin{description}
\item[Local volatility] We wish to test a model \(\mu\) for \(\mu_t\)
  in the process
  \begin{equation}
    \label{eq:lv}
    dX_t = b_t\,dt + \sqrt{\mu_t}\,dB_t,
  \end{equation}
  making observations \(X_{t_i},\) \(i = 0, \dots, n.\) We set
  \(\widehat X_{i} \coloneqq X_{t_i},\) and estimate \(\mu_{t_i}\) by
  the realised volatility \citep{andersen_distribution_2001,
    barndorff-nielsen_econometric_2002},
  \[Y_i \coloneqq n(X_{t_{i+1}}-X_{t_i})^2.\]
  We then define the variance function
  \(\sigma^2 \coloneqq 2\mu^2.\)

\item[Jumps]
  We wish to test a model \(\mu\) for \(\mu_t\) in the process
  \[dX_t = b_t\,dt + \sqrt{\mu_t}\,dB_t + \tint_\R f_t(x)
  \,\lambda(dx,dt),\]
  making observations \(X_{t_i},\) \(i = 0, \dots, n.\) We set
  \(\widehat X_{i} \coloneqq X_{t_i},\) and estimate \(\mu_{t_i}\) by
  the truncated realised volatility \citep{mancini_non_2006,
    jacod_remark_2012},
  \[Y_i = g_n(\sqrt{n}(X_{t_{i+1}} - X_{t_i})), \qquad g_n(x) = x^2
    1_{x^2 < \alpha_n},\] for any sequence \(\alpha_n > 0\) satisfying
  \begin{equation}
    \label{eq:an}
    \log(n) = o(\alpha_n),\qquad \alpha_n = o(n^\kappa) \text{ for all }
    \kappa > 0.
  \end{equation}
  We then define the variance function
  \(\sigma^2 \coloneqq 2\mu^2.\)

\item[Microstructure noise]
  We wish to test a model \(\mu\) for \(\mu_t\) in the process
  \[dX_{1,t} = b_t\,dt + \sqrt{\mu_t}\,dB_t.\]
  We make observations
  \[\widetilde X_{1,i} \coloneqq X_{1,t_i'} + \varepsilon_i,\qquad i =
    0, \dots, n^2,\] where the noises \(\varepsilon_i\) are measurable
  in the filtrations \(\F_{t_i'}^+ \coloneqq \bigcap_{s > {t_i'}} \F_s,\) and
  satisfy
  \begin{align*}
    \E[\varepsilon_i \mid \mathcal F_{t_i'}] &= 0, \\
    \E[\varepsilon_i^2 \mid \mathcal F_{t_i'}] &= X_{2,t_i'},\\
    \E[\abs{\varepsilon_i}^{\kappa}\mid\mathcal F_{t_i'}] &\le C,
  \end{align*}
  for an It\=o semimartingale \(X_{2,t}\) with locally-bounded
  characteristics, and constants \(\kappa > 8,\) \(C > 0.\) We
  estimate \(X_{t_j}\) and \(\mu_{t_j}\) by their pre-averaged
  counterparts \citep{jacod_microstructure_2009,
    reis_asymptotic_2011},
  \begin{align*}
    \widehat X_{1,j}
    &\coloneqq n^{-1} \tsum_{i=0}^{n-1} \widetilde X_{1,nj+i},\\
    \widehat X_{2,j}
    &\coloneqq (2n)^{-1}\tsum_{i=0}^{n-1} (\widetilde X_{1,nj+i+1}
      - \widetilde X_{1,nj + i})^2,\\
    Y_j
    &\coloneqq \pi^2(2n^{-1}(\tsum_{i=0}^{n-1} \cos(\pi (i+\tfrac12)/n)\widetilde
      X_{1,nj+i})^2 - \widehat X_{2,j}).
  \end{align*}
  We then define the variance function \(\sigma^2 \coloneqq 2(\mu +
  \pi^2X_{2,t})^2.\)

\item[Stochastic volatility]
  We wish to test a model \(\mu\) for \(\mu_t\) in the processes
  \begin{align*}
    dX_{1,t} &= b_t\,dt + \sqrt{X_{2,t}}\,dB_t,\\
    dX_{2,t} &= b'_t\,dt + \sqrt{\mu_t}\,dB'_t,    
  \end{align*}
  making observations \(X_{1,t_i'},\) \(i = 0, \dots, n^2.\) We define
  volatility estimates
  \[\widetilde X_{2,i} \coloneqq n^2(X_{1,t_{i+1}'} - X_{1,t_i'})^2, \qquad
  i = 0, \dots, n^2-1,\]
  which we use to estimate \(X_{t_j}\) and \(\mu_{t_j}\)
  \citep{barndorff-nielsen_stochastic_2009, vetter_estimation_2012},
  \begin{align*}
    \widehat X_{1,j} &\coloneqq X_{1,t_j},\\
    \widehat X_{2,j}
    &\coloneqq n^{-1} \tsum_{i=0}^{n-1} \widetilde X_{2,nj+i},\\
    Y_j
    &\coloneqq 2\pi^2(n^{-1}(\tsum_{i=0}^{n-1} \cos(\pi (i+\tfrac12)/n) \widetilde
      X_{2,nj+i})^2
      - \widehat X_{2,j}^2).
  \end{align*}
  We then define the variance function \(\sigma^2 \coloneqq 2(\mu +
  2\pi^2X_{2,t}^2)^2.\)

\item[Others] Many other models, for example including covolatility or
  leverage, or combining any of the above features, can be described
  similarly. For simplicity, we assume in the following that the times
  \(t_i\) are deterministic and uniform; however, models with uneven
  or random times that are suitably dense and predictable can be
  addressed in a similar fashion.
\end{description}

To concisely describe these examples, and others, we will state a set
of assumptions on the observations \(Y_i,\) mean and variance
functions \(\mu\) and \(\sigma^2,\) parameters \(\theta,\) covariates
\(X_t,\) and estimates \(\widehat X_i.\) It will be possible to show
that the above models all lie within our assumptions, and we may thus
work within these assumptions with some generality.

To begin, we define some notation. Let \(\norm{\,\cdot\,}\) denote any
finite-dimensional vector norm; write \(a = O(b)\) if
\(\norm{a} \le C\norm{b},\) for some universal constant \(C;\) and
write \(a = O_p(b)\) if for each \(\varepsilon > 0,\) the random
variables \(a\) and \(b\) satisfy
\(\P(\norm{a} > C_\varepsilon \norm{b}) \le \varepsilon,\) for
universal constants \(C_\varepsilon.\)

We stress here that the implied constants \(C\) and \(C_\varepsilon\)
are universal; in statements such as \(a = O(1),\) we require the
supremum \(\sup\, \norm{a}\) over all such \(a\) to be bounded.  Given
a function \(f : X \to \R,\) we also define the supremum norm
\(\norm{f}_\infty \coloneqq \tsup_{x \in X} \abs{f(x)}.\)

Our assumptions are then as follows.

\begin{assumption}
  \label{ass:mod}
  Let \((\Omega, \mathcal F, (\mathcal F_t)_{t \in [0,1]}, \P)\) be a
  filtered probability space, with adapted unobserved mean, variance
  and covariate processes \(\mu_t \in \R,\) \(\sigma_t^2 \ge 0,\) and
  \(X_t \in \R^q,\) respectively. For \(0 \le t \le t+h \le 1,\)
  letting \(W_t\) denote either of the processes \(\mu_t\) or \(X_t,\)
  we have
\begin{equation}
\label{eq:wsm}
\begin{aligned}
  W_t &= O(1),\\
  \E[W_{t+h} - W_t \mid \mathcal F_t] &= O(h),\\
  \E[\norm{W_{t+h} - W_t}^2 \mid \mathcal F_t] &= O(h).
\end{aligned}
\end{equation}

For \(i = 0, \dots, n-1,\) we have \(\mathcal F_{t_{i+1}}\)-measurable
estimates \(\widehat X_i\) of \(X_{t_i},\) satisfying
\begin{equation}
\label{eq:xha}
\begin{aligned}
  \E[\norm{\widehat X_{i} - X_{t_i}}^2 \mid \mathcal
  F_{t_i}] &= O(n^{-1}),\\
  \E[\norm{\widehat X_{i} - X_{t_i}}^4 \mid \mathcal
  F_{t_i}] &= O(n^{-1}).
\end{aligned}
\end{equation}
We also have \(\mathcal F_{t_{i+1}}\)-measurable observations \(Y_i,\)
satisfying
\begin{equation}
\label{eq:ymb}
\begin{aligned}
  \E[Y_i \mid \mathcal F_{t_i}] &= \mu_{t_i} + O(n^{-1/2}),\\
  \Var[Y_i \mid \mathcal F_{t_i}] &= \sigma_{t_i}^2 + O(n^{-1/4}),\\
  \E[\abs{Y_i}^{4+\varepsilon} \mid \mathcal F_{t_i}] &= O(1),
\end{aligned}
\end{equation}
for a constant \(\varepsilon > 0.\) 

Under the null hypothesis \(H_0,\) we suppose our observations \(Y_i\)
are described by a parametric model,
\[\mu_t = \mu(\theta_0, t, X_t),\qquad \sigma_t^2 = \sigma^2(\theta_0,t,
X_t),\]
for known functions \(\mu,\,\sigma^2 : \Theta \times [0,1] \times \R^q
\to \R,\) and an unknown parameter \(\theta_0 \in \Theta.\) We suppose
that \(\Theta \subseteq \R^p\) is closed, and \(\sigma^2\) is
positive. We also suppose the functions \(\mu\) and \(\sigma^2\) are
locally Lipschitz in \(\theta,\) continuously differentiable in \(t,\)
and twice continuously differentiable in \(X.\) Finally, we suppose we
have a good estimate \(\widehat \theta\) of \(\theta_0,\) satisfying
\[\widehat \theta - \theta_0 = O_p(n^{-1/2}).\]
Under the alternative hypothesis \(H_1,\) we instead allow
\(\mu_t,\,\sigma_t\) unrestricted, and require only that \(\widehat
\theta = O_p(1).\)
\end{assumption}

To ensure the examples given above lie within \autoref{ass:mod}, we
must require that the parameter space \(\Theta \subseteq \R^p\) be
closed, and the model function \(\mu\) be locally Lipschitz in
\(\theta,\) continuously differentiable in \(t,\) and twice
continuously differentiable in \(X_t.\) These conditions should all be
satisfied for most common models.

We must further require the semimartingales \(X_t\) to be bounded, and
have bounded characteristics. In general, this assumption may not hold
directly; however, we can assume it without loss of generality using
standard localisation arguments.

In \applink, we then check that the above examples satisfy our
conditions on the processes \(\mu_t,\) \(\sigma_t,\) and \(X_t;\)
estimates \(\widehat X_i;\) and observations \(Y_i.\) Most of these
conditions follow from standard results on stochastic processes; where
necessary, higher-moment bounds can be proved using our
\autoref{lem:was} below.

To satisfy \autoref{ass:mod}, it remains to choose an estimate
\(\widehat \theta\) of \(\theta_0,\) having error \(O_p(n^{-1/2})\)
under \(H_0,\) and being \(O_p(1)\) under \(H_1.\) While our results
are agnostic as to the choice of \(\widehat \theta,\) a simple choice
is given by the least-squares estimate
\begin{equation}
\label{eq:ls}
\widehat \theta \coloneqq \arg \tmin_{\theta \in \Theta}
\tsum_{i=0}^{n-1} (Y_i - \mu(\theta, t_{i}, \widehat X_i))^2,
\end{equation}
which can be found by numerical optimisation.  Under standard
regularity assumptions for nonlinear regression, this estimate
\(\widehat \theta\) can be shown to satisfy our conditions, arguing
for example as in Section~5 of \citet{vetter_model_2012}.

Finally, we note that in the microstructure noise and stochastic
volatility models, we need to make \(n^2+1\) observations of the
underlying process \(X_t\) to construct the \(n\) estimates \(Y_i.\)
We may thus expect to achieve the square-root of any convergence rates
given below; such behaviour, however, is common to all approaches to
these problems in the literature.

We have thus shown that many different semimartingale goodness-of-fit
problems can be described by our \autoref{ass:mod}. Next, we will
describe our solutions to these problems.

\section{Wavelet detection tests}
\label{sec:tes}

To state our tests for the problems given by \autoref{ass:mod}, we
first consider the signal function
\[S_t(\theta) \coloneqq (\mu_t - \mu(\theta, t, X_t))/\sigma(\theta,t,
  X_t).\] This function measures the distance of the model mean
\(\mu\) from the true mean \(\mu_t,\) weighted by the model variance
\(\sigma^2.\) Under \(H_0,\) we have
\[S_t(\widehat \theta) \approx S_t(\theta_0) = 0,\] while under
\(H_1,\) we can in general expect \(\abs{S_t(\widehat \theta)}\) to be
large. We may thus reject \(H_0\) whenever an estimate of
\(S_t(\widehat \theta)\) is significantly different from zero.

To estimate the signal \(S_t(\theta),\) we will use wavelet
methods. Let \(\varphi\) and \(\psi\) be the Haar scaling function and
wavelet,
\[\varphi \coloneqq 1_{[0, 1)},\qquad
\psi \coloneqq 1_{[0, 1/2)}-1_{[1/2,1)},\]
and for \(j = 0, 1, \dots,\) \(k = 0, \dots, 2^j-1,\) define the
Haar basis functions
\[\varphi_{j,k}(t) \coloneqq 2^{j/2}\varphi(2^jt - k), \qquad
\psi_{j,k}(t) \coloneqq 2^{j/2}\psi(2^jt - k).\]
We can then describe \(S_t(\theta)\) in terms of its
scaling and wavelet coefficients
\[\alpha_{j,k}(\theta) \coloneqq \tint_0^1
\varphi_{j,k}(t)S_t(\theta)\,dt, \qquad \beta_{j,k}(\theta) \coloneqq \tint_0^1 \psi_{j,k}(t)S_t(\theta)\,dt.\]

To estimate these coefficients, we first pick a resolution level \(J
\in \N_0,\) so that \(2^J\) is of order \(n^{1/2}.\) We then estimate
the scaling coefficients \(\alpha_{J,k}(\theta)\) by
\[\widehat \alpha_{J,k}(\theta) \coloneqq n^{-1} \tsum_{i=0}^{n-1}
\varphi_{J,k}(t_{i})Z_i(\theta),\] where the normalised observations
\[Z_i(\theta) \coloneqq (Y_i - \mu(\theta, t_{i}, \widehat
X_i))/\sigma(\theta, t_{i}, \widehat X_i).\]
We note that for fixed \(\theta,\) these estimates can be computed in
linear time, as each observation \(Y_i\) contributes to only one
coefficient \(\widehat \alpha_{J,k}(\theta).\)

To estimate the coefficients \(\alpha_{0,0}(\theta)\) and
\(\beta_{j,k}(\theta),\) \(0 \le j < J,\) we then perform a fast
wavelet transform, obtaining estimates
\[\widehat 
\alpha_{0,0}(\theta) \coloneqq \tsum_l\widehat
\alpha_{J,l}(\theta)\tint_0^1\varphi_{J,l}\varphi_{0,0}, \qquad
\widehat \beta_{j,k}(\theta) \coloneqq \tsum_l\widehat
\alpha_{J,l}(\theta)\tint_0^1\varphi_{J,l}\psi_{j,k}.\]
We note that efficient implementations of this transformation, running
in linear time, are widely available.

To test our hypotheses, we will take the maximum size of these
estimated coefficients, producing test statistics
\[\widehat T(\theta) \coloneqq \tmax_{0 \le j<J,k}\, \abs{\widehat
  \alpha_{0,0}(\theta)},\,\abs{\widehat \beta_{j,k}(\theta)}.\]
We will show that under \(H_0,\) \(\widehat T(\widehat \theta)\) is
asymptotically Gumbel distributed, while under \(H_1,\) \(\widehat
T(\widehat \theta)\) will tend to be greater.

\begin{theorem}Let \autoref{ass:mod} hold.
  \label{thm:lim}
  \begin{enumerate}
  \item \label{it:h0}
    Under \(H_0,\)
    \[a^{-1}_{2^J}(n^{1/2}\widehat T(\widehat \theta) - b_{2^J})
    \overset{d}\to G\] uniformly, where the constants
    \begin{align*}
      a_{m} &\coloneqq (2 \log(m))^{-1/2},\\
      b_{m} &\coloneqq a_{m}^{-1} - \tfrac12 a_{m}\log (\pi \log(m)),
    \end{align*} and \(G\) denotes the standard Gumbel
    distribution.
  \item \label{it:h1} Under \(H_1,\)
    \[\widehat T(\widehat \theta) - T(\widehat \theta) =
    O_p(n^{-1/2}\log(n)^{1/2})\] uniformly, where
    \[T(\theta) \coloneqq \tmax_{0\le j<J,k}\,
    \abs{\alpha_{0,0}(\theta)},\,\abs{\beta_{j,k}(\theta)}.\]
  \end{enumerate}
\end{theorem}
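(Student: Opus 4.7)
The two parts share a common reduction, so I would first set up a single decomposition of $\widehat\beta_{j,k}(\theta)$ and $\widehat\alpha_{0,0}(\theta)$ valid on both hypotheses, and then specialise. Writing $Z_i(\theta) = S_{t_i}(\theta) + \eta_i(\theta)$ with $\eta_i(\theta) \coloneqq (Y_i - \mu_{t_i})/\sigma(\theta,t_i,\widehat X_i) + (\mu(\theta_0,t_i,X_{t_i}) - \mu(\theta,t_i,\widehat X_i))/\sigma(\theta,t_i,\widehat X_i) - S_{t_i}(\theta)$ plus a reparametrisation term, the first step is to show that the fast-wavelet identity makes $\widehat\beta_{j,k}(\theta) = n^{-1}\sum_i \psi_{j,k}(t_i)Z_i(\theta)$ (and analogously for $\widehat\alpha_{0,0}$), up to rounding errors controlled by the spacing $n^{-1}$ and the Lipschitz modulus of $S_t(\theta)$. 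The next step is to write
\[\widehat\beta_{j,k}(\theta) - \beta_{j,k}(\theta) = n^{-1}\tsum_i \psi_{j,k}(t_i)\eta_i(\theta) + \bigl(n^{-1}\tsum_i \psi_{j,k}(t_i)S_{t_i}(\theta) - \beta_{j,k}(\theta)\bigr),\]
where the second (deterministic) term is a Riemann-sum remainder of order $n^{-1/2}$ for a $\tfrac12$-Hölder integrand, using \autoref{ass:mod} and the a priori boundedness of $\mu,\sigma^{-1}$ on compact sets together with $\widehat\theta = O_p(1)$.

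The heart of the argument is to control the noise term. Conditional on $\mathcal F_{t_i}$, the $\eta_i(\theta)$ are approximately centred with variance close to $1$ and uniformly bounded $(4+\varepsilon)$-moments by \eqref{eq:ymb} and \eqref{eq:xha}. I would use the Skorokhod embedding in the manner indicated by the introduction: on an enlarged probability space, construct i.i.d.\ $N(0,1)$ variables $\xi_i$ and stopping times so that $\eta_i(\theta_0)$ and $\xi_i$ agree up to controlled errors in $L^{4+\varepsilon}$. Combined with a maximal inequality for martingale sums indexed by $(j,k)$ with $2^J = O(n^{1/2})$ dyadic indices, this gives the uniform bound
\[\tmax_{j,k}\,\bigl|n^{-1}\tsum_i \psi_{j,k}(t_i)\eta_i(\theta)\bigr| = O_p(n^{-1/2}\log(n)^{1/2})\]
uniformly in $\theta$ on compacts, by a chaining / bracketing argument that exploits the Lipschitz dependence of $\eta_i(\theta)$ on $\theta$ together with $\widehat\theta = O_p(1)$. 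The deterministic remainder is of the same order, giving part \ref{it:h1}.

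For part \ref{it:h0}, one further uses $\widehat\theta - \theta_0 = O_p(n^{-1/2})$ and the smoothness of $\mu,\sigma^2$ to show that replacing $\widehat\theta$ by $\theta_0$ inside $\widehat T$ costs $O_p(n^{-1/2})$, which is negligible on the scale $a_{2^J}/n^{1/2} \sim (n\log n)^{-1/2}$ governing the Gumbel limit. Under $H_0$ we have $\beta_{j,k}(\theta_0)=\alpha_{0,0}(\theta_0)=0$, so $n^{1/2}\widehat T(\theta_0)$ equals the maximum over $O(2^J)$ coordinates of $n^{-1/2}\sum_i w_i \eta_i(\theta_0)$ for Haar weights $w_i$. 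The Skorokhod coupling reduces this, with error $o_p(a_{2^J})$, to $\tmax_{j,k}|G_{j,k}|$ where each $G_{j,k}$ is standard normal; at each resolution level the Haar basis has disjoint supports, so the $G_{j,k}$ for fixed $j$ are independent. The classical extreme-value asymptotic for the maximum of $m\sim 2^J$ i.i.d.\ $|N(0,1)|$ variables then yields the $(a_{2^J},b_{2^J})$-normalised Gumbel limit.

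The hard part is getting a Skorokhod embedding strong enough for the Gumbel limit while working in a general semimartingale setting: the KMT rate is unavailable because $\eta_i$ is neither independent nor light-tailed uniformly, so I must carefully show that the $(4+\varepsilon)$-moment hypothesis in \eqref{eq:ymb}, together with the martingale increment structure, gives coupling errors of order $o(a_{2^J}/n^{1/2})$ after summation against Haar weights. The uniform-in-$(j,k)$ maximal inequality, rather than any single coordinate's fluctuation, is what drives the regime; this is exactly where the $\log(n)^{1/2}$ factor in part \ref{it:h1} and the Gumbel normalisation in part \ref{it:h0} arise.
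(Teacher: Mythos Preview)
Your overall decomposition is right, and the Skorokhod strategy is indeed what the paper uses. But there is a genuine gap in your treatment of part~\eqref{it:h0}.

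You claim that replacing $\widehat\theta$ by $\theta_0$ in $\widehat T$ costs $O_p(n^{-1/2})$, and that this is negligible on the Gumbel scale $a_{2^J}/n^{1/2}\sim(n\log n)^{-1/2}$. But $n^{-1/2}$ is \emph{larger} than $(n\log n)^{-1/2}$ by a factor $\sqrt{\log n}$, so the claim is false as stated. The same issue applies to your Riemann-sum bias term: for coarse-scale coefficients such as $\widehat\alpha_{0,0}$, both the plug-in error $\widehat\alpha_{0,0}(\widehat\theta)-\widehat\alpha_{0,0}(\theta_0)$ and the bias are genuinely of order $n^{-1/2}$, which after rescaling by $n^{1/2}$ gives an $O_p(1)$ contribution---not $o_p(a_{2^J})$.

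The paper resolves this by observing that these nuisance errors carry an extra factor $2^{-j/2}$: one has $\max_{j,k}2^{j/2}\abs{\widetilde\beta_{j,k}(\widehat\theta)-\widetilde\beta_{j,k}(\theta_0)}=O_p(n^{-1/2})$ and likewise $\max_{j,k}2^{j/2}\abs{\overline\beta_{j,k}(\widehat\theta)}=O_p(n^{-1/2})$ for the bias. So the errors are too large at coarse scales but shrink polynomially at fine scales $j\ge J_2\coloneqq\lfloor J/2\rfloor$. The step you are missing is then to split $\widehat T=\max(\overline T,\widetilde T)$ into a coarse-scale part $\overline T$ over $j<J_2$ and a fine-scale part $\widetilde T$ over $J_2\le j<J$, argue that $\widehat T=\widetilde T$ with probability tending to one (because the Gaussian maximum over $\sim 2^J$ variables dominates the one over $\sim 2^{J_2}$), and apply the Gumbel limit only on $\widetilde T$, where the plug-in and bias terms are $O_p(n^{-\varepsilon'})=o_p(a_{2^J})$.

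A secondary issue: you note that Haar functions at a fixed level $j$ have disjoint supports, so the resulting Gaussians $G_{j,k}$ are independent within each level. This is true but insufficient for the Gumbel limit over all $(j,k)$; you need joint independence across levels. The paper obtains this directly: after the Skorokhod coupling the level-$J$ scaling-coefficient Gaussians $\xi_l$ are i.i.d., and since the full Haar transform is orthogonal, the $\widetilde\xi_{j,k}=\sum_l b_{j,k,l}\xi_l$ are jointly i.i.d.\ standard normal across \emph{all} $(j,k)$.
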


We thus obtain that under \(H_0,\) \(\widehat T(\widehat \theta)\)
concentrates around zero at a rate \(n^{-1/2}\log(n)^{1/2}.\) Under
\(H_1,\) it concentrates at the same rate around the quantity
\(T(\widehat \theta),\) which measures the size of the signal
\(S_t(\widehat \theta).\) We can use this result to construct tests of
our hypotheses, and prove bounds on their performance; we first note
that for some of our bounds, we will require the following assumption.

\begin{assumption}
  \label{ass:ito}
  The processes \(\mu_t\) and \(X_t\) are
  It\=o semimartingales,
  \begin{align*}
    \mu_t &= \tint_0^t(b_s^\mu\,ds + (c_s^\mu)^T\,dB_s +
    \tint_\R f_s^{\mu}(x)\,\lambda(dx, ds)),\\
    X_{i,t} &= \tint_0^t(b_{i,s}^X\,ds + (c_{i,s}^X)^T\,dB_s +
    \tint_\R f_{i,s}^X(x)\,\lambda(dx, ds)),
  \end{align*}
  for a Brownian motion \(B_s \in \R^{q+1},\) independent Poisson
  random measure \(\lambda(dx,ds)\) having compensator \(dx\,ds,\)
  predictable processes \(b_s^\mu,\) \(b_{i,s}^X,\) \(c_s^\mu,\) \(c_{i,s}^X =
  O(1),\) and predictable functions \(f_s^\mu(x),\) \(f_{i,s}^X(x)\)
  satisfying \(\tint_\R 1\wedge \abs{f_s(x)}\,dx = O(1).\)
\end{assumption}

Under \autoref{ass:ito}, we thus have that \(\mu_t\) and \(X_t\) are
It\=o semimartingales, with bounded characteristics and
finite-variation jumps. This assumption holds for many common
financial models, if necessary after a suitable localisation
step. Using this condition, we are now ready to describe our tests,
and bound their performance.

\begin{theorem}
  \label{thm:ub}
  Let \autoref{ass:mod} hold, and for \(\alpha \in (0,1),\)
  define the Gumbel quantile
  \[q_{n,\alpha} \coloneqq -a_{2^J}\log(-\log(1 - \alpha)) +
  b_{2^J},\] and critical region
  \[C_{n,\alpha} \coloneqq \{n^{1/2}\widehat T(\widehat \theta) > q_{n,\alpha}\}.\]

  \begin{enumerate}
  \item \label{it:u0} Under \(H_0,\) we have \(\P[C_{n,\alpha}] \to \alpha\) uniformly.
  
  \item \label{it:u1} Under \(H_1,\) let \(M_n > 0\) be a fixed
    sequence with \(M_n \to \infty.\) If \(E_n\) is one of the events:
    \begin{enumerate}
    \item \label{it:ua} \(\{\norm{S(\widehat \theta)}_\infty \ge
      M_nn^{-1/4}\log(n)^{1/2}\},\) given also \autoref{ass:ito}; or
    \item \label{it:ub} \(\{\tmax_{0 \le j \le J,k}\,2^{j/2}\abs{\tint_{2^{-j}k}^{2^{-j}(k+1)} S_t(\widehat \theta)\,dt} \ge
      M_nn^{-1/2}\log(n)^{1/2}\};\)
    \end{enumerate}
    we have \(\P[E_n \setminus C_{n,\alpha}] \to 0\) uniformly.
  \end{enumerate}
\end{theorem}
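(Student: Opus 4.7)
Part~(i) is a direct corollary of part~(i) of \autoref{thm:lim}: unwinding the definitions,
\[\P[C_{n,\alpha}] = \P\bigl[a_{2^J}^{-1}(n^{1/2}\widehat T(\widehat\theta) - b_{2^J}) > -\log(-\log(1-\alpha))\bigr],\]
and by \autoref{thm:lim}(i) this converges uniformly to $1 - G(-\log(-\log(1-\alpha))) = \alpha$, using the Gumbel formula $G(x) = \exp(-e^{-x})$.

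For part~(ii), the unifying observation is that \autoref{thm:lim}(ii) gives $n^{1/2}\widehat T(\widehat\theta) = n^{1/2} T(\widehat\theta) + O_p(\log(n)^{1/2})$, while $q_{n,\alpha} = O(\log(n)^{1/2})$ from $a_{2^J} = O(\log(n)^{-1/2})$ and $b_{2^J} = O(\log(n)^{1/2})$. It therefore suffices to show, on each event $E_n$, the lower bound $n^{1/2}T(\widehat\theta) \ge cM_n\log(n)^{1/2}$ with probability tending to one, for a universal constant $c>0$.

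For case~(ii)(b), the tool is the Haar synthesis identity: for any $0 \le j \le J$ and $k$,
\[\varphi_{j,k} = 2^{-j/2}\varphi_{0,0} + \sum_{j'=0}^{j-1}\epsilon_{j'}\,2^{(j'-j)/2}\,\psi_{j',k_{j'}},\]
where $k_{j'}$ is the unique index with $I_{j',k_{j'}} \supseteq I_{j,k}$ and $\epsilon_{j'} \in \{-1,+1\}.$ Integrating against $S_t(\widehat\theta)$ and summing the geometric series $\sum_{j'<j}2^{(j'-j)/2} \le \sqrt 2 + 1$ yields $|\alpha_{j,k}(\widehat\theta)| \le (2+\sqrt 2)\,T(\widehat\theta),$ uniformly in $j\le J$ and $k.$ Since $E_n$ in this case reads exactly $\max_{j,k}|\alpha_{j,k}(\widehat\theta)| \ge M_n n^{-1/2}\log(n)^{1/2},$ the required lower bound on $T(\widehat\theta)$ follows. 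For case~(ii)(a), I reduce to case~(ii)(b) by a modulus-of-continuity argument: under \autoref{ass:ito}, on each dyadic interval $I_{J,k}$ of length $2^{-J}\asymp n^{-1/2},$ the oscillation of the continuous part of $S_t(\widehat\theta)$ is $O_p(n^{-1/4}\log(n)^{1/2})$ by L\'evy's modulus applied to the bounded-coefficient It\^o semimartingales driving $S.$ Taking $t^*$ approximately attaining $\norm{S(\widehat\theta)}_\infty,$ this gives a dyadic interval whose integral average lies within $O_p(n^{-1/4}\log(n)^{1/2})$ of $|S_{t^*}(\widehat\theta)|$; multiplying by $2^{J/2}\asymp n^{1/4}$ converts this into $\max_k|\alpha_{J,k}(\widehat\theta)| \gtrsim M_n n^{-1/2}\log(n)^{1/2},$ placing us in the hypothesis of case~(ii)(b).

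\textbf{Main obstacle.} The delicate step is the uniform modulus-of-continuity estimate in case~(ii)(a), specifically the handling of the finite-variation jumps permitted by \autoref{ass:ito}. If $t^*$ lies at or just after a jump, the dyadic interval $I_{J,k^*}$ containing $t^*$ may have an average far from $|S_{t^*}(\widehat\theta)|.$ The workaround is that, by Poisson thinning of jumps above a fixed size threshold, with high probability each of the $2^J$ dyadic intervals contains at most one such ``large'' jump, and therefore at least one of the two dyadic intervals $I_{J,k^*\pm 1}$ sees $S_t(\widehat\theta)$ close to $|S_{t^*}(\widehat\theta)|$ throughout its length. This is what forces the near-optimal rate $n^{-1/4}\log(n)^{1/2}$ in case~(ii)(a), whereas case~(ii)(b)---which measures wavelet-scale averages rather than the pointwise supremum---enjoys the faster rate $n^{-1/2}\log(n)^{1/2}.$
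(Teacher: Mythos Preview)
Your plan for part~(i) and case~(ii)(b) matches the paper's argument essentially verbatim; the paper also uses the Haar projection identity $P_j = P_0 + \sum_{j'<j} Q_{j'}$ to bound $|\alpha_{j,k}(\widehat\theta)|$ by a geometric sum of the wavelet coefficients, just written pointwise rather than as a basis expansion.

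For case~(ii)(a), reducing to case~(ii)(b) via a modulus-of-continuity estimate is the right idea and is exactly what the paper does (packaged as \autoref{lem:smt}), but your jump-handling sketch has a gap. With a \emph{fixed} threshold~$\epsilon$, the small-jump contribution to the oscillation of $S_t(\widehat\theta)$ over a dyadic interval of length $2^{-J}$ is only $O_p(\epsilon)$, not $O_p(n^{-1/4}\log(n)^{1/2})$: \autoref{ass:ito} permits infinite jump activity, and a single sub-threshold jump of size near $\epsilon$ landing in your chosen interval---which happens for $O_p(1/\epsilon)$ of the $2^J$ intervals---already spoils the estimate. L\'evy's modulus controls only the Brownian-plus-drift part, and you give no argument for the sub-threshold jumps. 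The paper's remedy, in the proof of \autoref{lem:smt}, is to take a \emph{shrinking} threshold $n^{-1/4}\log(n)^{1/2}$: then the small-jump part does satisfy the required uniform modulus, via the exponential martingale bound of \autoref{lem:exp}\eqref{it:de}, while the number of large jumps grows only like $O_p(n^{1/4}/\log(n)^{1/2})$, still sparse enough that with probability tending to one their minimum spacing exceeds $2^{1-J}$. With that correction in place, your reduction to~(ii)(b) goes through.
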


We thus obtain that the test which rejects \(H_0\) on the event
\(C_{n,\alpha}\) is of asymptotic size \(\alpha,\) and under
\autoref{ass:ito}, can detect signals \(S_t(\widehat \theta)\) at the
rate \(n^{-1/4}\log(n)^{1/2}\) in supremum norm. We further have that,
even without \autoref{ass:ito}, our test can detect a signal whenever
the size of its mean over a dyadic interval is large.

In particular, if \(S_t(\widehat \theta) \propto e_t\) for some
non-zero deterministic process \(e_t,\) then \(e_t\) must have
non-zero integral over some dyadic interval \(2^{-j}[k, k+1).\) We
deduce that our test can detect signals in the fixed direction \(e_t\)
at the rate \(n^{-1/2}\log(n)^{1/2},\) without prior knowledge of
\(e_t.\)

We can further show that these detection rates are
near-optimal.

\begin{theorem}
  \label{thm:lb}
  Let \autoref{ass:mod} hold, and \(\delta_n >0\) be a fixed sequence
  with \(\delta_n \to 0.\) If \(E_n\) is one of the events:
  \begin{enumerate}
  \item \label{it:la} \(\{\norm{S(\widehat \theta)}_\infty \ge
    \delta_nn^{-1/4}\},\) given also \autoref{ass:ito}; or
  \item \label{it:lb} \(\{\tmax_k
    2^{j_n/2}\abs{\tint_{2^{-j_n}k}^{2^{-j_n}(k+1)} S_t(\widehat
      \theta)\,dt} \ge \delta_nn^{-1/2}\},\) for some \(j_n =
    0, \dots, J;\)
  \end{enumerate}
  then no sequence of critical regions \(C_n\) can satisfy
  \[\lim \tsup_n \P[C_n] < 1\]
  uniformly over \(H_0,\) and
  \[\P[E_n \setminus C_n] \to 0\]
  uniformly over \(H_1.\)
\end{theorem}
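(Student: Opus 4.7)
The plan is a standard Le Cam two-point reduction. Suppose, toward a contradiction, that a sequence of critical regions $C_n$ satisfies both $\lim\tsup_n \P[C_n] < 1$ uniformly over $H_0$ and $\P[E_n \setminus C_n] \to 0$ uniformly over $H_1.$ I will construct, for each $n,$ a single null law $P_n \in H_0$ and a finite family $\{Q_n^{(k)}\}_{k=0}^{N-1}$ of laws in $H_1$ under each of which $E_n$ occurs with probability one, and show that the mixture $Q_n := N^{-1}\tsum_k Q_n^{(k)}$ satisfies $\chi^2(Q_n, P_n) \to 0.$ The $H_1$ hypothesis then forces $Q_n^{(k)}[C_n] \to 1$ for each $k,$ hence $Q_n[C_n] \to 1;$ a Cauchy--Schwarz bound yields $\abs{Q_n[C_n] - P_n[C_n]} \le \tfrac12 \chi^2(Q_n,P_n)^{1/2} \to 0,$ so $P_n[C_n] \to 1,$ contradicting the $H_0$ hypothesis.

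For the construction I work within the local-volatility example~\eqref{eq:lv} with the trivial model $\mu(\theta, t, X) := \theta,$ $\sigma^2 := 2\theta^2$ on $\Theta := [\theta_-, \theta_+] \ni \theta_0 > 0,$ covariate $X_t = \widehat X_i := 0$ and deterministic estimator $\widehat\theta \equiv \theta_0.$ Under $P_n$ I take $\mu_t \equiv \theta_0,$ and under $Q_n^{(k)}$ I take $\mu_t := \theta_0 + \rho_n\phi_{j^\ast,k}(t),$ where $\phi : \R \to [0,1]$ is a fixed smooth function supported on $[0,1]$ with $\norm{\phi}_\infty = 1$ and $\tint_0^1\phi > 0,$ and $\phi_{j,k}(t) := 2^{j/2}\phi(2^jt - k).$ For part~(i) I take $j^\ast := J,$ $N := 2^J,$ and for part~(ii) I take $j^\ast := j_n,$ $N := 2^{j_n};$ in both cases $\rho_n$ is a suitable fixed multiple of $\delta_n n^{-1/2}.$ For $n$ large $\mu_t$ is smooth and uniformly bounded between two positive constants, so Assumption~\ref{ass:mod} (and also Assumption~\ref{ass:ito}) is satisfied; since $S_t(\widehat\theta) = \rho_n\phi_{j^\ast,k}(t)/(\sqrt 2\,\theta_0),$ a one-line calculation using $\norm{\phi}_\infty = 1$ for part~(i) and $\tint \phi > 0$ for part~(ii) shows that $E_n$ holds surely under every $Q_n^{(k)}.$

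For the chi-squared bound, under either $P_n$ or $Q_n^{(k)}$ the price increments $\Delta_i := X_{t_{i+1}}-X_{t_i}$ are independent centred Gaussians with variances $v_i^{(k)} := \tint_{t_i}^{t_{i+1}}\mu_s^{(k)}\,ds,$ and $Y_i = n\Delta_i^2$ is a deterministic function of them. For $k \ne k',$ the bumps $\phi_{j^\ast,k}$ and $\phi_{j^\ast,k'}$ have disjoint supports, so the likelihood ratios $L_k := dQ_n^{(k)}/dP_n$ depend on disjoint blocks of increments and hence $\E_{P_n}[L_k L_{k'}] = 1.$ Consequently $\chi^2(Q_n,P_n) = N^{-1}(\E_{P_n}[L_0^2] - 1),$ and a short Gaussian moment-generating computation gives
\[\E_{P_n}[L_0^2] = \prod_i (1-r_i^2)^{-1/2}, \qquad r_i := nv_i^{(0)}/\theta_0 - 1.\]
Combined with the pointwise bound $\tsup_i\abs{r_i} = O(\rho_n 2^{j^\ast/2}) = O(\delta_n n^{-1/4}) = o(1)$ and the aggregate bound $\tsum_i r_i^2 = O(n\rho_n^2 \norm{\phi}_2^2/\theta_0^2) = O(\delta_n^2),$ this yields $\E_{P_n}[L_0^2] - 1 = O(\delta_n^2)$ and so $\chi^2(Q_n,P_n) = O(\delta_n^2/N) \to 0,$ completing the contradiction.

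The technical heart of the argument is the chi-squared computation and the disjoint-support factorisation; both rest on the simultaneous scalings $\rho_n = O(\delta_n n^{-1/2})$ and $2^{j^\ast} \le 2^J = O(n^{1/2}),$ which ensure $\rho_n 2^{j^\ast/2} \to 0$ so that all Gaussian MGFs are finite and the second-order Taylor expansion of $\log(1-r_i^2)^{-1/2}$ is justified. Once these bounds are in hand, the Le Cam reduction is routine, and an entirely analogous construction in any of the other examples of Section~\ref{sec:det} gives the same conclusion for those settings.
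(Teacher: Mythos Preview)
Your mixture-of-bumps construction has a genuine gap: the alternatives $Q_n^{(k)}$ you build do not belong to $H_1$, because they violate the increment conditions~\eqref{eq:wsm} of \autoref{ass:mod} (and, for part~\eqref{it:la}, the bounded-characteristics requirement of \autoref{ass:ito}). With $j^\ast = J$ and $\rho_n \asymp \delta_n n^{-1/2}$, your volatility $\mu_t = \theta_0 + \rho_n\phi_{J,k}(t)$ is deterministic, so $\E[\mu_{t+h}-\mu_t\mid\F_t] = \mu_{t+h}-\mu_t$, and the bump has Lipschitz constant of order $\rho_n 2^{3J/2} \asymp \delta_n n^{1/4}$. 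Unless $\delta_n = O(n^{-1/4})$ this blows up, so the condition $\E[\mu_{t+h}-\mu_t\mid\F_t] = O(h)$ with a \emph{universal} constant fails; likewise $b_t^\mu = \mu_t'$ is unbounded, contradicting \autoref{ass:ito}. (The second-moment condition $\E[(\mu_{t+h}-\mu_t)^2\mid\F_t]=O(h)$ does hold, but the first-moment one does not.) The same issue arises in part~\eqref{it:lb} whenever $j_n$ is close to $J$. Replacing $\phi$ by an indicator does not help: a deterministic jump of size $\rho_n 2^{j^\ast/2}$ still violates $\mu_{t+h}-\mu_t=O(h)$.

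The paper's proof avoids this by taking the signal under $H_1$ to be a (scaled) Brownian motion started at a late time, $\mu_t = \delta_n^{1/2}n^{-1/2}2^{j_n}(B_{t\vee\tau}-B_\tau)$. Because $\mu_t$ is a martingale, the drift condition is trivially $0 = O(h)$, and its diffusion coefficient $\delta_n^{1/2}n^{-1/2}2^{j_n}$ is $O(1)$, so both assumptions hold. The lower bound then reduces to testing $N(0,I)$ against $N(0,I+\delta_n\Sigma)$ with $\norm{\Sigma}_F = O(1)$, handled by a Gaussian covariance-testing lemma. The moral is that within this paper's hypothesis class, narrow deterministic bumps are forbidden; the indistinguishable alternatives must be genuinely stochastic, and this is exactly what a Brownian signal provides.
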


We thus conclude that our goodness-of-fit tests achieve the
near-optimal detection rate of \(n^{-1/4}\log(n)^{1/2}\) against
general nonparametric alternatives, in a wide variety of
semimartingale models. This result is already a significant
improvement over previous work; we note that similar methods do not
establish near-optimality for the procedures of
\citet{dette_test_2003}, for example, where the corresponding lower
bound would be \(n^{-1/3}.\)

Furthermore, we have shown that our method simultaneously provides
near-optimal detection rates against alternatives which are easier to
detect, including the case where the signal \(S_t(\widehat \theta)\)
lies in a fixed direction \(e_t.\) We may thus achieve good detection
rates in a fully nonparametric setting, without sacrificing
performance against fixed alternatives.

\section{Finite-sample tests}
\label{sec:emp}

We next consider the empirical performance of our tests. As convergence
to the Gumbel distribution can be quite slow, in the following, we
will consider a bootstrap version of our tests, which will be more
accurate in finite samples.

The general procedure is as follows. First, we estimate the parameters
\(\theta\) from the data, using some estimate \(\widehat \theta.\)
Next, we simulate many sets of observations \(Y_i^{(j)}\) from the
null hypothesis, with parameters chosen by \(\widehat \theta.\) Any
components of the null hypothesis not described by \(\theta,\) such as
drift or jump processes, are set to zero.

For each set of simulated observations \(Y_i^{(j)},\) we then compute
a parameter estimate \(\widehat \theta^{(j)},\) and statistic
\(\widehat T^{(j)}(\widehat \theta^{(j)}).\) Finally, we reject the
null hypothesis if the original statistic
\(\widehat T(\widehat \theta)\) is larger than the
\((1-\alpha)\)-quantile of the simulated statistics
\(\widehat T^{(j)}(\widehat \theta^{(j)}).\)

We now perform some simple Monte Carlo experiments on these tests.  We
will compare our tests to those of \citet{dette_test_2003},
\citet{dette_estimation_2006} and \citet{dette_testing_2008}, using
the same methodology as \citeauthor{dette_testing_2008}.  As in that
paper, we will generate Monte Carlo observations in the local
volatility setting \eqref{eq:lv}. We will then use our tests to
evaluate the goodness-of-fit of various parametric models for the
volatility.

In each case, we consider receiving \(n=100,\) 200 or 500
observations, and constructing confidence tests at the
\(\alpha = 5\%\) or 10\% level.  We then generate 1,000 realisations of
simulated data, compare our statistic against 1,000 bootstrap samples
in each realisation, and report the proportion of runs in which the
null hypothesis is rejected.

In our tests, we set the resolution level
\(J \coloneqq \lfloor \log_2(n) / 2 \rfloor,\) and use the
least-squares parameter estimates \(\widehat \theta\) given by
\eqref{eq:ls}. As the models we consider will be linear in the
parameters \(\theta,\) we will be able to compute these estimates in
closed form, as linear regressions.

\autoref{tab:results} then gives the observed rejection probabilities
of our tests in two models: a constant volatility model, where
\(\mu(x, t, \theta) = \theta;\) and a proportional volatility model,
where \(\mu(x, t, \theta) = \theta x^2.\) In each case, we give
results for our tests under a variety of null and alternative
hypotheses.

We note the hypotheses tested are the same as in Tables 1--4 of
\citet{dette_testing_2008}, as well as Table 3 of
\citet{dette_test_2003}, and Tables 3.1 and 3.4 of
\citet{dette_estimation_2006}. We may thus directly compare the
performance of our tests to those given in previous work.

We find that in both models, our tests have good coverage under the
null hypothesis, and reliably reject under the alternative
hypothesis. The power of our tests is competitive with previous work
under the constant volatility model, and generally improves upon
previous work under the proportional volatility model.

We conclude that our tests not only achieve good theoretical detection
rates, but also provide strong finite-sample performance. They may
thus be recommended for many different goodness-of-fit problems,
whether previously discussed in the literature, or newly described by
our more general assumptions.

\begin{table}
\centering
\begin{tabular}{lcccccc}
\toprule
\(n\) & \multicolumn{2}{c}{100} & \multicolumn{2}{c}{200} & \multicolumn{2}{c}{500} \\
\cmidrule(lr){2-3}\cmidrule(lr){4-5}\cmidrule(lr){6-7}
\(\alpha\) & 5\% & 10\% & 5\% & 10\% & 5\% & 10\% \\
\midrule
\addlinespace[1em]
\multicolumn{7}{l}{\em Constant volatility, null, \(\mu_t = 1\)}\\
\quad \(b_t = 0\) & 0.048 & 0.105 & 0.056 & 0.101 & 0.035 & 0.089 \\
\quad \(b_t = 2\) & 0.055 & 0.114 & 0.057 & 0.103 & 0.044 & 0.084 \\
\quad \(b_t = X_t\) & 0.056 & 0.101 & 0.041 & 0.093 & 0.037 & 0.092 \\
\quad \(b_t = 2 - X_t\) & 0.048 & 0.095 & 0.052 & 0.105 & 0.051 & 0.100 \\
\quad \(b_t = tX_t\) & 0.038 & 0.094 & 0.060 & 0.101 & 0.063 & 0.111 \\
\addlinespace[1em]
\multicolumn{7}{l}{\em Constant volatility, alternative, \(b_t = X_t\)}\\
\quad \(\sqrt{\mu_t} = 1 + X_t\) & 0.777 & 0.840 & 0.898 & 0.932 & 0.976 & 0.985 \\
\quad \(\sqrt{\mu_t} = 1 + \sin{5X_t}\) & 0.964 & 0.977 & 0.997 & 0.999 & 1.000 & 1.000 \\
\quad \(\sqrt{\mu_t} = 1 + X_t \exp{t}\) & 0.954 & 0.975 & 0.987 & 0.994 & 0.999 & 0.999 \\
\quad \(\sqrt{\mu_t} = 1 + X_t \sin{5t}\) & 0.851 & 0.908 & 0.970 & 0.982 & 0.994 & 0.995 \\
\quad \(\sqrt{\mu_t} = 1 + tX_t\) & 0.742 & 0.796 & 0.883 & 0.914 & 0.951 & 0.972 \\
\addlinespace[1em]
\multicolumn{7}{l}{\em Proportional volatility, null, \(\mu_t = X_t^2\)}\\
\quad \(b_t = 0\) & 0.062 & 0.119 & 0.044 & 0.090 & 0.043 & 0.087 \\
\quad \(b_t = 2\) & 0.073 & 0.120 & 0.056 & 0.106 & 0.043 & 0.081 \\
\quad \(b_t = X_t\) & 0.070 & 0.115 & 0.055 & 0.100 & 0.043 & 0.098 \\
\quad \(b_t = 2 - X_t\) & 0.053 & 0.085 & 0.055 & 0.100 & 0.034 & 0.081 \\
\quad \(b_t = tX_t\) & 0.070 & 0.106 & 0.062 & 0.123 & 0.045 & 0.106 \\
\addlinespace[1em]
\multicolumn{7}{l}{\em Proportional volatility, alternative, \(b_t = 2 - X_t\)}\\
\quad \(\mu_t = 1 + X_t^2\) & 0.602 & 0.673 & 0.700 & 0.766 & 0.844 & 0.884 \\
\quad \(\mu_t = 1\) & 0.832 & 0.871 & 0.927 & 0.951 & 0.979 & 0.991 \\
\quad \(\mu_t = 5\abs{X_t}^{3/2}\) & 0.580 & 0.669 & 0.672 & 0.760 & 0.854 & 0.902 \\
\quad \(\mu_t = 5\abs{X_t}\) & 0.896 & 0.932 & 0.963 & 0.974 & 0.995 & 0.998 \\
\quad \(\mu_t = (1+X_t)^2\) & 0.831 & 0.878 & 0.894 & 0.929 & 0.964 & 0.979 \\
\addlinespace[1em]
\bottomrule
\end{tabular}
\caption{Observed rejection probabilities for bootstrap test.}
\label{tab:results}
\end{table}

\section{Proofs}
\label{sec:pro}

We now give proofs of our results. In \autoref{sec:pst} we will state
some technical results, in \autoref{sec:pmn} give our main proofs, and
in \applink\ prove our technical results.

\subsection{Technical results}
\label{sec:pst}

We first state the technical results we will require.  Our main
technical result will be a central limit theorem for martingale
difference sequences, bounding the exponential moments of the distance
from Gaussian.

\begin{lemma}
  \label{lem:was}
  Let \((\Omega, \mathcal F, (\mathcal F_j)_{j=0}^n, \P)\) be a
  filtered probability space, and let \(X_i,\) \(i = 0, \dots, n-1,\)
  be \(\mathcal F_{i+1}\)-measurable real random variables.  Suppose
  that for some \(\kappa \ge 1,\)
  \begin{align*}
    \E[X_{i} \mid \mathcal F_{i}] &= 0,\\
    \tsum_{i=0}^{n-1} \E[\abs{X_{i}}^{4\kappa} \mid \mathcal F_{i}] &= O(n^{1-2\kappa}).
  \end{align*}

  \begin{enumerate}
  \item \label{it:gd} If also
    \[\E[\abs{\tsum_{i=0}^{n-1} \E[X_i^2 \mid \F_i] -
        1}^{2\kappa} \mid \F_0] = O(n^{-\kappa}),\] 
    then on a suitably-extended probability space, we
    have real random variables \(\xi,\) \(\eta\) and \(M,\)
    independent of \(\mathcal F\) given \(\mathcal F_n,\) such that
    \[\tsum_{i=0}^{n-1} X_i = \xi + \eta;\]
    \(\xi\) is standard Gaussian given \(\mathcal F_0;\) we have
    \[\E[\abs{\eta}^{4\kappa} \mid \F_0] = O(n^{-\kappa});\]
    for \(u \in \R,\)
    \[\E[\exp(u\eta - \tfrac12 u^2M) \mid \mathcal F_0] \le 1;\]
    and \(M \ge 0\) satisfies
    \begin{equation}
      \label{eq:ms}
      \E[M^{2\kappa} \mid \mathcal F_0] = O(n^{-\kappa}).
    \end{equation}

  \item \label{it:em} For random variables \(c_i = O(1),\) let
    \(\upsilon_c \coloneqq \tsum_{i=0}^{n-1}c_iX_i.\) Then on a
    suitably-extended probability space, we have a constant
    \(A = O(1)\) and real random variable \(M,\) independent of
    \(\mathcal F\) given \(\mathcal F_n,\) such that
    \[\tsup_c \E[\abs{\upsilon_c}^{4\kappa} \mid \F_0] = O(1);\]
    for \(u \in \R,\)
    \[\tsup_{c}\, \E[\exp(u\upsilon_c - \tfrac12 u^2(A+M))\mid \mathcal F_0]
      \le 1;\] and \(M \ge 0\) satisfies~\eqref{eq:ms}.
  \end{enumerate}
\end{lemma}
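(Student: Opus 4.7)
The plan is to use a Skorokhod embedding, as flagged in the introduction. I would first extend the probability space to carry a standard Brownian motion \(B\) that is independent of \(\F\), and work in the enlarged filtration \(\mathcal G_t \coloneqq \F_n \vee \F_t^B,\) in which \(B\) remains a Brownian motion. Iteratively, I would construct stopping times \(0=\tau_0 \le \tau_1 \le \dots \le \tau_n\) with \(B_{\tau_i} - B_{\tau_{i-1}} = X_{i-1},\) using a Skorokhod construction (e.g.\ Dubins, or Azéma--Yor) that produces \(\sigma_i \coloneqq \tau_i-\tau_{i-1}\) with \(\E[\sigma_i\mid\mathcal G_{\tau_{i-1}}] = \E[X_{i-1}^2\mid\F_{i-1}]\) and the standard Skorokhod moment bound \(\E[\sigma_i^k\mid\mathcal G_{\tau_{i-1}}] \le C_k \E[X_{i-1}^{2k}\mid\F_{i-1}].\)

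For part~\ref{it:gd}, I would set \(\xi \coloneqq B_1,\) which is standard Gaussian and independent of \(\F_0\) by the extension, \(\eta \coloneqq B_{\tau_n}-B_1,\) so that \(\tsum_i X_i = B_{\tau_n}= \xi+\eta,\) and \(M \coloneqq \abs{\tau_n-1}.\) To bound \(M\) in \(L^{2\kappa},\) I would decompose \(\tau_n-1 = (\tau_n-V_n)+(V_n-1)\) with \(V_n\coloneqq \tsum_i \E[X_i^2\mid\F_i];\) the second term is \(O(n^{-1/2})\) in \(L^{2\kappa}\) by hypothesis, and the first is a \(\mathcal G\)-martingale sum, so Burkholder--Davis--Gundy (BDG) gives
\[\E[\abs{\tau_n-V_n}^{2\kappa}\mid\F_0]\le C\,\E[(\tsum_i \sigma_i^2)^\kappa\mid\F_0] \le C n^{\kappa-1}\tsum_i\E[\sigma_i^{2\kappa}\mid\F_0],\]
which via the Skorokhod moment bound reduces to the \(4\kappa\)-moment hypothesis on \(X_i\) and yields \(O(n^{-\kappa}).\) Another application of BDG to \(B_{\tau_n}-B_1\) then gives \(\E[\abs{\eta}^{4\kappa}\mid\F_0] \le C\,\E[M^{2\kappa}\mid\F_0] = O(n^{-\kappa}).\)

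For the exponential inequality, I would split on the sign of \(\tau_n-1.\) On \(\{\tau_n\ge 1\},\) optional stopping applied to the exponential martingale \(\exp(u(B_{1+t}-B_1)-\tfrac12 u^2 t)\) at the stopping time \(\tau_n-1\) gives \(\E[\exp(u\eta-\tfrac12 u^2 M)1_{\tau_n\ge1}\mid\mathcal G_1] = 1_{\tau_n\ge1};\) on \(\{\tau_n<1\},\) the increment \(\eta = -(B_1-B_{\tau_n})\) is a Brownian increment over a time interval of length \(1-\tau_n\) in the shifted filtration, so the same trick applied from time \(\tau_n\) gives \(\E[\exp(u\eta-\tfrac12 u^2 M)1_{\tau_n<1}\mid\mathcal G_{\tau_n}]=1_{\tau_n<1}.\) Summing and taking \(\F_0\)-expectations gives the claimed bound (with equality; any integrability issues in the optional-stopping step can be dispatched by localisation, using the \(L^{2\kappa}\)-bound on \(\tau_n\)).

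For part~\ref{it:em}, I would reuse the same \(B\) and \(\tau_i,\) and represent \(\upsilon_c\) as the stochastic integral \(\upsilon_c = \tint_0^{\tau_n}\gamma_s\,dB_s\) with \(\gamma_s \coloneqq c_i\) for \(s\in(\tau_i,\tau_{i+1}],\) which is \(\mathcal G\)-predictable and bounded by \(C\coloneqq \tsup_i\norm{c_i}_\infty.\) Then \(\tint_0^{\tau_n}\gamma_s^2\,ds \le C^2\tau_n \le A+M\) with \(A\coloneqq C^2\) and \(M\coloneqq C^2\abs{\tau_n-1};\) BDG gives the uniform \(L^{4\kappa}\)-bound \(\E[\abs{\upsilon_c}^{4\kappa}\mid\F_0] \le C'\,\E[\tau_n^{2\kappa}\mid\F_0] = O(1),\) and the exponential supermartingale \(\exp(u\upsilon_c - \tfrac12 u^2\tint\gamma^2)\) yields the desired inequality, both uniformly in \(c.\) I expect the main obstacle to be the \(L^{2\kappa}\)-control of \(\tau_n-V_n\) in the first paragraph: chaining the BDG inequality, the Skorokhod moment bound and Hölder in just the right order to extract the \(O(n^{-\kappa})\) rate from the hypothesis \(\tsum_i\E[\abs{X_i}^{4\kappa}\mid\F_i] = O(n^{1-2\kappa})\) is the only step that is not essentially formal.
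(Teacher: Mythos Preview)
There is a genuine gap in the construction: you cannot take the Brownian motion \(B\) independent of \(\mathcal F\). In your filtration \(\mathcal G_t = \mathcal F_n \vee \mathcal F_t^B\), every \(X_i\) is already \(\mathcal G_0\)-measurable, so conditionally on \(\mathcal G_{\tau_{i-1}}\) the target increment \(X_{i-1}\) is a deterministic constant. Embedding a fixed constant \(x\ne 0\) into a Brownian path forces \(\sigma_i \ge \inf\{t:B_{\tau_{i-1}+t}-B_{\tau_{i-1}} = x\}\), which has infinite expectation; no Dubins or Az\'ema--Yor scheme yields finite \(\E[\sigma_i\mid\mathcal G_{\tau_{i-1}}]\) here, let alone the claimed value \(\E[X_{i-1}^2\mid\F_{i-1}]\). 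Put differently: if \(B\) is independent of \(\mathcal F\) then \(B_1\) is independent of \(\tsum_i X_i\), so \(\eta=\tsum_i X_i-B_1\) has conditional variance at least \(1\), directly contradicting \(\E[\abs{\eta}^{4\kappa}\mid\F_0]=O(n^{-\kappa})\).

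The paper resolves this by building the Brownian motion \emph{from} the \(X_i\): for each \(i\) it first Skorokhod-embeds the conditional law of \(X_i\) given \(\F_i\) into an auxiliary Brownian motion, then pulls back the regular conditional law of the stopped path given its terminal value and plugs in the actual outcome \(X_i\), obtaining \(B_{i,\tau_i}=X_i\) exactly with \(\E[\tau_i\mid\widetilde{\mathcal F}_i]=\E[X_i^2\mid\F_i]\) and the usual Skorokhod moment bounds. After concatenating the pieces into a single Brownian motion, your BDG and exponential-supermartingale steps go through essentially as written. One smaller issue: in part~\eqref{it:em} your choice \(M=C^2\abs{\tau_n-1}\) need not satisfy~\eqref{eq:ms}, since part~\eqref{it:em} does not assume the variance hypothesis of part~\eqref{it:gd} and \(\abs{V_n-1}\) may be of order one; instead take \(M\) of the order of the martingale fluctuation \(\tau_n-V_n\), and absorb the bound \(V_n=\tsum_i\E[X_i^2\mid\F_i]=O(1)\) (by Jensen from the \(4\kappa\)-moment hypothesis) into \(A\).
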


We will also need the following result on combining exponential moment
bounds.

\begin{lemma}
  \label{lem:tec}
  Let \((\Omega, \mathcal F, \P)\) be a probability space, with real
  random variables \((X_i)_{i=0}^{n-1}\) and \(M.\) Suppose that for
  \(u \in \R,\)
  \[\E[\exp(u X_i - \tfrac12 u^2 M)] = O(1),\]
  and \(M = O_p(r_n)\) for some rate \(r_n > 0.\) Then
  \[\tmax_i\, \abs{X_i} = O_p(r_n^{1/2}\log(n)^{1/2}).\]
\end{lemma}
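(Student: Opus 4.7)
\textbf{Proof plan for \autoref{lem:tec}.}
The plan is to turn the hypothesis $\E[\exp(uX_i - \tfrac12 u^2 M)] = O(1)$ into a sub-Gaussian tail bound for each $X_i$, conditional on a high-probability event where $M$ is bounded, and then to union-bound over $i$. Concretely, fix $\varepsilon > 0.$ Since $M = O_p(r_n),$ there exists $K = K_\varepsilon$ with $\P(M > K r_n) \le \varepsilon/2$ for all large $n,$ and I will work on the event $E \coloneqq \{M \le K r_n\}.$

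Next I would derive a one-sided Cram\'er bound. For $u > 0$ and $t > 0,$ on $E$ we have $uX_i - \tfrac12 u^2 M \ge uX_i - \tfrac12 u^2 K r_n,$ so $\{X_i > t\} \cap E \subseteq \{\exp(uX_i - \tfrac12 u^2 M) > \exp(ut - \tfrac12 u^2 K r_n)\}.$ Markov's inequality and the hypothesis give
\[\P(X_i > t,\, E) \le C \exp\bigl(-ut + \tfrac12 u^2 K r_n\bigr),\]
and optimising at $u = t/(K r_n)$ yields $\P(X_i > t,\, E) \le C\exp(-t^2/(2K r_n)).$ The symmetric argument with $u < 0$ gives the same bound for $\P(-X_i > t,\, E),$ so
\[\P(\abs{X_i} > t,\, E) \le 2C\exp\bigl(-t^2/(2K r_n)\bigr).\]

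The remaining step is a union bound over $i \below{n}$ and a choice of threshold. I would set $t = \sqrt{2K r_n \log(4Cn/\varepsilon)},$ which is of order $r_n^{1/2}\log(n)^{1/2}$ for fixed $\varepsilon,$ giving
\[\P\bigl(\tmax_i\,\abs{X_i} > t,\, E\bigr) \le 2Cn\exp\bigl(-\log(4Cn/\varepsilon)\bigr) = \varepsilon/2.\]
Combining with $\P(E^c) \le \varepsilon/2$ yields $\P(\tmax_i\,\abs{X_i} > t) \le \varepsilon,$ which is exactly the definition of $\tmax_i\,\abs{X_i} = O_p(r_n^{1/2}\log(n)^{1/2}).$

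There is no real obstacle here beyond the initial bookkeeping: the only subtle point is that $M$ is random rather than a deterministic constant, which forces the split into $E$ and $E^c$; once that is done the calculation is a standard Chernoff-plus-union-bound argument, and the optimal $u = t/(Kr_n)$ produces precisely the sub-Gaussian exponent with effective variance of order $r_n.$
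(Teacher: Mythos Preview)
Your proof is correct and follows essentially the same skeleton as the paper's: restrict to the event $\{M \le Kr_n\}$, convert the hypothesis into a sub-Gaussian bound, and union-bound over $i$. The only technical difference is in how the sub-Gaussian tail is extracted: the paper introduces an independent standard Gaussian $\xi$ and uses the identity $\E_\xi[e^{x\xi}] = e^{x^2/2}$ to obtain $\E[\exp(X_i^2/4R)\,1_{M\le R}] = O(1)$ directly, then bounds $\E[\max_i \exp(X_i^2/4R)\,1_{M\le R}]$ by the sum; you instead optimise the Chernoff exponent $-ut + \tfrac12 u^2 K r_n$ at $u = t/(Kr_n)$ and union-bound on probabilities. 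Your route is slightly more elementary (no extended probability space needed), while the paper's randomisation trick packages both tails at once; both are standard and yield the same conclusion.
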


Our next technical result will bound the moments of our observations
\(Y_i,\) and their normalisations \(Z_i(\theta).\) The result will be
stated using the H\"older spaces \(C^s,\) defined as follows.  Given a
function \(f : X \to \R,\) for suitable \(X \subseteq \R^d,\) we
define the 1-H\"older norm
\[\norm{f}_{C^1} \coloneqq \norm{f}_\infty \vee \tsup_{x,y \in X} \abs{f(x) -
  f(y)}/\norm{x-y},\]
and the \(2\)-H\"older norm
\[\norm{f}_{C^2} \coloneqq \begin{cases} 
  \norm{f}_{\infty} \vee \tmax_{i=1}^d \norm{(\nabla f)_i}_{C^1}, &\text{\(f\) is differentiable,}\\
  \infty, &\text{otherwise}.
\end{cases}\]
We also say \(f\) is \(C^s\) if \(\norm{f}_{C^s} < \infty.\)

\begin{lemma}
  \label{lem:obs}
  Under \(H_0\) or \(H_1,\) suppose the \(\widehat X_i = O(1),\) and
  \(\Theta\) is bounded.
  \begin{enumerate}
  \item \label{it:yl} For fixed \(i\) and \(Y_i,\) the variables
    \(Z_i(\theta)\) are \(C^1\) functions of \(\theta\) and \(\widehat
    X_i,\) with H\"older norm \(O(1 + \abs{Y_i}).\)
  \item \label{it:sl} The variables \(S_t(\theta)\) are \(C^1\)
    functions of \(\theta,\) \(t,\) \(\mu_t\) and \(X_t,\) and for
    fixed \(\theta\) and \(t,\) also \(C^2\) functions of \(\mu_t\)
    and \(X_t,\) both with H\"older norm \(O(1).\)
  \item \label{it:zm} For \(\theta \in \Theta,\) we have
    \begin{align*}
      \E[Z_i(\theta) \mid \mathcal F_{t_i}]
      &= S_{t_i}(\theta) + O(n^{-1/2}),\\
      \E[\abs{Z_i(\theta)}^{4+\varepsilon} \mid \mathcal F_{t_i}]
      &= O(1),\\
      \intertext{and under \(H_0,\) also}
      \E[Z_i(\theta_0)^2 \mid \mathcal F_{t_i}]
      &= 1 + O(n^{-1/4}).
    \end{align*}
  \item \label{it:ys}
    Define times
    \begin{equation}
      \label{eq:sk}
      s_k \coloneqq \lceil n2^{-J}k \rceil / n, \qquad k = 0, \dots,
      2^J.
    \end{equation}
    Then
    \[\tmax_k n^{-1/2} \tsum_{i=ns_k}^{ns_{k+1}-1} Y_i^2 = O_p(1).\]
\end{enumerate}
\end{lemma}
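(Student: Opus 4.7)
\emph{Plan.} Parts (i) and (ii) reduce to direct calculus. Since $\Theta$ is bounded (and closed), $\widehat X_i = O(1)$, and $\mu_t, X_t = O(1)$ by \autoref{ass:mod}, the arguments of $\mu$ and $\sigma^2$ range over a compact set; on it, $\mu$ and $\sigma^2$ are jointly Lipschitz continuous by the stated differentiability conditions, and $\sigma^{-1}$ is bounded because $\sigma^2$ is continuous and strictly positive. For (i), applying the quotient rule to $Z_i(\theta) = (Y_i - \mu)/\sigma$ produces derivatives in $\theta$ and $\widehat X_i$ dominated by $C(1 + \abs{Y_i})$, since $Y_i$ appears only linearly in the numerator. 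For (ii), the identical calculation with $\mu_t, X_t$ in place of $Y_i, \widehat X_i$ gives the $C^1$ bound; the $C^2$ statement follows because $S_t$ is affine (hence smooth) in $\mu_t$, while $\mu/\sigma$ is $C^2$ in $X$ by assumption.

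\emph{Part (iii).} I would write $Z_i(\theta) = \widetilde Z_i(\theta) + R_i(\theta)$ with $\widetilde Z_i(\theta) \coloneqq (Y_i - \mu(\theta, t_i, X_{t_i}))/\sigma(\theta, t_i, X_{t_i})$. The H\"older bound of (i) yields $\abs{R_i} \le C(1 + \abs{Y_i})\norm{\widehat X_i - X_{t_i}}$, so Cauchy--Schwarz combined with \eqref{eq:xha} and the $(4{+}\varepsilon)$-moment bound of \eqref{eq:ymb} gives $\E[\abs{R_i} \mid \F_{t_i}] = O(n^{-1/2})$; together with $\E[\widetilde Z_i(\theta) \mid \F_{t_i}] = S_{t_i}(\theta) + O(n^{-1/2})$ from \eqref{eq:ymb}, this is the first claim. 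The moment bound on $Z_i$ is immediate from $\abs{Z_i}^{4+\varepsilon} \le C(1 + \abs{Y_i}^{4+\varepsilon})$. For the second-moment identity under $H_0$, substitute $\mu_{t_i} = \mu(\theta_0, t_i, X_{t_i})$ and $\sigma_{t_i}^2 = \sigma^2(\theta_0, t_i, X_{t_i})$ and expand $Z_i(\theta_0)^2$ around $X_{t_i}$ using the $C^2$-in-$X$ smoothness; the replacement error has conditional mean $O(n^{-1/2})$ by the same Cauchy--Schwarz argument applied to $1 + Y_i^2$, while the leading term $\E[(Y_i - \mu_{t_i})^2 \mid \F_{t_i}]/\sigma_{t_i}^2$ equals $1 + O(n^{-1/4})$, the $n^{-1/4}$ rate being inherited from the variance approximation in \eqref{eq:ymb}.

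\emph{Part (iv).} This is the main technical obstacle, as we must control a maximum over $2^J \asymp n^{1/2}$ dyadic blocks of size $n_k \asymp n^{1/2}$ armed with only $(4{+}\varepsilon)$ moments on $Y_i$. I would split $Y_i^2 = \E[Y_i^2 \mid \F_{t_i}] + M_i$; by \eqref{eq:ymb} the predictable part is $O(1)$ uniformly, contributing at most $n^{-1/2}\cdot n_k\cdot O(1) = O(1)$ per block. For the martingale part, apply Rosenthal's inequality at exponent $p \coloneqq 2 + \varepsilon/2$ (the largest for which $Y_i^2$ admits uniform $L^p$ control): both $\tsum_i \E[M_i^2 \mid \F_{t_i}]$ and $\tsum_i \E[\abs{M_i}^p \mid \F_{t_i}]$ are $O(n^{1/2})$, giving $\norm{\tsum_i M_i}_p = O(n^{1/4})$ and hence $\norm{n^{-1/2}\tsum_i M_i}_p = O(n^{-1/4})$ per block. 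A Markov plus union bound over the $2^J$ blocks then gives
\[\P\bigl[\tmax_k\, n^{-1/2}\abs{\tsum_i M_i} > C\bigr] = O\bigl(C^{-p}\cdot n^{1/2 - p/4}\bigr) = O\bigl(C^{-p}n^{-\varepsilon/8}\bigr),\]
which tends to zero; combined with the uniform bound on the predictable piece this yields the $O_p(1)$ conclusion. The balance is tight---with only second moments the exponent $1/2 - p/4$ would fail to be negative, so the $(4{+}\varepsilon)$-th moment in \eqref{eq:ymb} is used essentially here.
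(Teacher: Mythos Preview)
Your treatment of parts (i)--(iii) is correct and matches the paper's almost line for line: restrict to a compact set, read off the $C^1$/$C^2$ bounds, then for (iii) write $Z_i(\theta)=\widetilde Z_i(\theta)+\gamma_i$ with $\widetilde Z_i$ using $X_{t_i}$ in place of $\widehat X_i$, bound $\gamma_i$ via the Lipschitz estimate from (i), and close with Cauchy--Schwarz. (The paper's version of the second-moment computation gets only $O(n^{-1/4})$ for the replacement error, via $\E[\gamma_i^2\mid\F_{t_i}]^{1/2}$, but your $O(n^{-1/2})$ is also fine and in any case dominated by the $O(n^{-1/4})$ from the variance line of \eqref{eq:ymb}.)

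Part (iv) is correct but takes a genuinely different route. The paper stays in $L^2$: it sets
\[R_k \coloneqq m^{-1}\tsum_{i=ns_k}^{ns_{k+1}-1}\bigl((Y_i-\E[Y_i\mid\F_{t_i}])^2-\Var[Y_i\mid\F_{t_i}]\bigr),\]
an average of a martingale-difference sequence whose conditional variances are bounded (this uses only the \emph{fourth} moment of $Y_i$), so $\E[R_k^2]=O(m^{-1})=O(n^{-1/2})$; then $\E[\max_k R_k^2]\le\sum_k\E[R_k^2]=O(2^J n^{-1/2})=O(1)$, which immediately controls $\E[(\max_k n^{-1/2}\sum Y_i^2)^2]$. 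Your Rosenthal-at-$p=2+\varepsilon/2$ plus union bound is a perfectly good alternative, but your closing remark that the $(4+\varepsilon)$-moment is ``used essentially here'' is specific to your approach: the paper's $L^2$ summation over the $2^J\asymp n^{1/2}$ blocks is exactly balanced and goes through with just fourth moments. The extra $\varepsilon$ in \eqref{eq:ymb} is genuinely needed elsewhere (e.g.\ for the moment condition in \autoref{lem:was} feeding into \autoref{lem:alp}), but not for this lemma.
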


Finally, we will need a result controlling the behaviour of the
processes \(S_t(\theta)\) under \autoref{ass:ito}.

\begin{lemma}
  \label{lem:smt}
  Under \(H_1,\) suppose \(\Theta\) is bounded, let \(\underline
  \Theta_n \subseteq \Theta\) be a sequence of finite sets, of size
  \(O(n^\kappa)\) for some \(\kappa \ge 0,\) and let \(\delta_n = O(n^{-1/2}).\)
  Given \autoref{ass:ito}, we have
  \[S_t(\theta) = \widetilde S_t(\theta) + \overline S_t(\theta),\]
  where the processes \(\widetilde S_t(\theta)\) and \(\overline S_t(\theta)\) are as
  follows.
  \begin{enumerate}
  \item \label{it:hs} We have
    \[\tsup_{\theta \in\underline \Theta_n,\,\abs{s-t} \le \delta_n}
    \abs{\widetilde S_s(\theta) - \widetilde S_t(\theta)} =
    O_p(n^{-1/4}\log(n)^{1/2}).\]
  \item \label{it:sj} In \(L^2([0,1]),\) let \(P_Jf\) denote the
    orthogonal projection of \(f\) onto the subspace spanned by the
    scaling functions \(\varphi_{J,k},\) and define the remainder
    \(R_Jf \coloneqq f - P_Jf.\) Then
    \[\tsup_{\theta \in \underline \Theta_n}
    \norm{R_J\widetilde S(\theta)}_\infty = O_p(n^{-1/4}\log(n)^{1/2}).\]
  \item \label{it:lj} We have a random variable \(N \in \N,\) and
    random times \(0 = \tau_0 < \dots < \tau_N = 1,\) such that the
    processes \(\overline S_t(\theta),\) \(\theta \in \underline \Theta_n,\) are
    constant on intervals \([\tau_i, \tau_{i+1}),\)
    \([\tau_{N-1},\tau_N],\) and
    \[\P[\tmin_i (\tau_{i+1} - \tau_i) < \delta_n] \to 0.\]
  \end{enumerate}
\end{lemma}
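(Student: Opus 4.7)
The plan is to write $S_t(\theta) = F(\theta, t, \mu_t, X_t)$ with $F(\theta, t, m, x) \coloneqq (m - \mu(\theta, t, x))/\sigma(\theta, t, x).$ By \autoref{ass:mod}, $F$ is $C^1$ in $(\theta, t)$ and $C^2$ in $(m, x)$ with bounded partial derivatives on the compact range of $(\mu_t, X_t).$ Applying the It\=o formula with jumps yields
\[S_t(\theta) = S_0(\theta) + \int_0^t b^S_u(\theta)\,du + \int_0^t c^S_u(\theta)^T\,dB_u + \sum_{s \le t}\Delta F_s(\theta),\]
where the drift and diffusion coefficients $b^S, c^S$ are predictable and bounded uniformly in $\theta \in \Theta,$ and each jump satisfies $\abs{\Delta F_s(\theta)} \le C(\abs{f^\mu_s(x)} + \norm{f^X_s(x)})$ for the underlying Poisson atom $(s, x)$ of $\lambda.$

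Next, I fix a threshold $\epsilon_n \coloneqq Cn^{-1/4}\log(n)^{1/2}$ with $C$ a large constant, and let $\tau_1 < \dots < \tau_{N-1}$ denote the times of those Poisson atoms with $\abs{f^\mu_s(x)} + \norm{f^X_s(x)} > \epsilon_n,$ setting also $\tau_0 \coloneqq 0$ and $\tau_N \coloneqq 1.$ Define $\overline S_t(\theta) \coloneqq \sum_{i \ge 1: \tau_i \le t} \Delta F_{\tau_i}(\theta);$ since the partition $\{\tau_i\}$ does not depend on $\theta,$ $\overline S(\theta)$ is piecewise constant on the same intervals for every $\theta.$ Since $\int 1 \wedge (\abs{f^\mu} + \norm{f^X})\,dx = O(1),$ the intensity of large atoms is $O(1/\epsilon_n),$ so $\E[(N-1)^2] = O(\epsilon_n^{-2}).$ A Campbell-type moment bound then gives $\P(\tmin_i(\tau_{i+1} - \tau_i) < \delta_n) = O(\delta_n/\epsilon_n^2) = O(1/\log(n)) \to 0,$ establishing (iii).

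For (i), set $\widetilde S \coloneqq S - \overline S$ and decompose $\widetilde S_t - \widetilde S_s$ into the drift, Brownian, and compensated small-jump martingale $M^S$ contributions, plus the bounded drift from the small-jump compensator. The drift and compensator terms are $O(\delta_n),$ and L\'evy's modulus of continuity gives $O_p(n^{-1/4}\log(n)^{1/2})$ for $\int c^S\,dB.$ The martingale $M^S$ has jumps of size $O(\epsilon_n)$ and predictable quadratic variation on an interval of length $\delta_n$ of size $O(\epsilon_n \delta_n);$ Bennett's inequality applied on each of the $O(n^{1/2})$ overlapping intervals of length $2\delta_n$ covering $[0,1],$ with a union bound over the $O(n^\kappa)$ points of $\underline\Theta_n,$ yields $\sup\, \abs{M^S_t(\theta) - M^S_s(\theta)} = O_p(n^{-1/4}\log(n)^{1/2})$ over $\theta \in \underline\Theta_n,\, \abs{s-t}\le \delta_n.$ Part (ii) follows similarly: $R_J\widetilde S(t)$ on each dyadic interval of length $2^{-J} = O(n^{-1/2})$ is bounded by the oscillation of $\widetilde S$ there, which is again $O_p(n^{-1/4}\log(n)^{1/2}).$

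The main obstacle is the simultaneous tuning of $\epsilon_n.$ It must be large enough that $\E[N^2]\,\delta_n = O(\delta_n/\epsilon_n^2) \to 0$ for the min-gap condition in (iii), yet small enough that both the individual jumps and the compensated fluctuation of the small-jump martingale are $O(n^{-1/4}\log(n)^{1/2})$ for (i). The choice $\epsilon_n \asymp n^{-1/4}\log(n)^{1/2}$ sits exactly at this boundary; achieving the sharp $\log(n)^{1/2}$ factor requires Bennett's inequality, as Bernstein in the relevant Poisson regime would overshoot by a factor of $\log(n).$
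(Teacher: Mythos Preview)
Your proposal is essentially correct and follows the same strategy as the paper: split the jumps of $(\mu_t,X_t)$ at the threshold $\epsilon_n\asymp n^{-1/4}\log(n)^{1/2}$, put the large-jump contribution to $S$ into $\overline S$, and control the three pieces of $\widetilde S$ (drift, Brownian integral, compensated small-jump martingale) on intervals of length $O(n^{-1/2})$ uniformly over $(\theta,k)$ via exponential tail bounds and a union bound. Your Bennett argument for the small-jump martingale is exactly the content of the paper's \autoref{lem:exp}\eqref{it:de}, and your Campbell-type count of close pairs for part~\eqref{it:lj} is equivalent to the paper's conditioning argument over stopping times.

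One point to tighten: invoking ``L\'evy's modulus of continuity'' for $\int c^S(\theta)\,dB$ is imprecise, because L\'evy's modulus is an almost-sure statement about a single Brownian path, whereas here you need control uniformly over the $O(n^\kappa)$ stochastic integrals indexed by $\theta\in\underline\Theta_n$ (with $\theta$-dependent integrands and hence $\theta$-dependent DDS time changes). What is actually needed is the exponential martingale inequality $\P(\sup_{u\in[s,t]}|\int_s^u c^S(\theta)\,dB|>x)\le 2\exp(-x^2/2C\delta_n)$ for each fixed $(\theta,k)$, followed by a union bound over the $O(n^{\kappa+1/2})$ pairs; this is precisely how the paper proceeds via \autoref{lem:exp}\eqref{it:se} and \autoref{lem:tec}. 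The fix is routine, and the rest of your argument goes through.
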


\subsection{Main proofs}
\label{sec:pmn}

We may now proceed with our main proofs. We first prove
\autoref{thm:lim}, beginning with a lemma controlling the variance of
our estimated scaling coefficients \(\widehat \alpha_{J,k}(\theta).\)

\begin{lemma}
  \label{lem:alp}
  For \(k = 0, \dots, 2^J-1,\) \(\theta \in \Theta,\) define
  scaling-coefficient variance terms
  \[\widetilde \alpha_{J,k}(\theta) \coloneqq n^{-1}\tsum_{i = 0}^{n-1}
    \varphi_{J,k}(t_{i})(Z_i(\theta) - \E[Z_i(\theta) \mid \mathcal F_{t_{i}}]).\]

  \begin{enumerate}
  \item \label{it:gd2} Under \(H_0,\) suppose the \(\widehat X_i =
    O(1).\) Then on a suitably-extended probability space, we have a
    filtration \((\mathcal G_k)_{k=0}^{2^J},\) and \(\mathcal
    G_{k+1}\)-measurable real random variables \(\xi_{k},\)
    \(\eta_{k},\) \(M_k,\) such that
    \[n^{1/2}\widetilde \alpha_{J,k}(\theta_0) = \xi_{k} + \eta_{k};\]
    the variables \(\xi_{k}\) are standard Gaussian given \(\mathcal
    G_{k};\)
    \[\E[\exp(u\eta_{k} - \tfrac12 u^2 M_k) \mid \mathcal G_{k}]
    \le 1;\]
    and the variables \(M_k \ge 0\) satisfy
    \begin{equation}
      \label{eq:mk}
      \E[M_k^{2 + \varepsilon/2} \mid \mathcal G_{k}] =
      O(n^{-(1/2+\varepsilon/8)}).
    \end{equation}

  \item \label{it:em2} Under \(H_1,\) suppose \(\Theta\) is bounded,
    and the \(\widehat X_i = X_{t_{i}}.\) We then have constants
    \(A_k = O(1),\) and on a suitably-extended probability space, a
    filtration \((\mathcal G_k)_{k=0}^{2^J}\) and real random
    variables \(M_k,\) such that
    \[\tsup_{\theta \in \Theta}\E[\exp(un^{1/2}\widetilde \alpha_{J,k}(\theta) - \tfrac12 u^2 (A_k +
    M_k)) \mid \mathcal G_{k}] \le 1;\]
    the variables \(M_k \ge 0\) satisfy~\eqref{eq:mk}; and the
    \(\widetilde \alpha_{J,k}(\theta)\) and \(M_k\) are \(\mathcal
    G_{k+1}\)-measurable.
  \end{enumerate}
\end{lemma}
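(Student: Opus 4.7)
The plan is to apply \autoref{lem:was} block-by-block on each dyadic interval \([2^{-J}k, 2^{-J}(k+1))\), and glue the resulting Skorokhod couplings together on an enlarged probability space. Since \(\varphi_{J,k}(t_i) = 2^{J/2}\mathbf 1_{\{i \in [ns_k, ns_{k+1})\}}\) with \(s_k\) as in \autoref{lem:obs}, and \(2^J\) is of order \(n^{1/2}\), each block contains \(m_k = ns_{k+1} - ns_k \asymp n^{1/2}\) indices; \autoref{lem:was} will be used with its ``\(n\)'' replaced by \(m_k\) and \(\kappa := 1 + \varepsilon/4\), so that \(4\kappa = 4 + \varepsilon\) matches the moment bounds available.

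For part~(\ref{it:gd2}), under \(H_0\) I would take the \(\mathcal F_{t_{i+1}}\)-measurable martingale differences
\[X_i := n^{-1/2}\varphi_{J,k}(t_i)\bigl(Z_i(\theta_0) - \E[Z_i(\theta_0)\mid\F_{t_i}]\bigr),\]
so \(\tsum_i X_i = n^{1/2}\widetilde\alpha_{J,k}(\theta_0)\). \autoref{lem:obs}~(\ref{it:zm}) gives \(\E[\abs{X_i}^{4\kappa}\mid\F_{t_i}] = O(n^{-1-\varepsilon/4})\) on the support of \(\varphi_{J,k}\), summing to \(O(m_k^{1-2\kappa})\). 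Since \(S_t(\theta_0)\equiv 0\) under \(H_0\), part~(\ref{it:zm}) of \autoref{lem:obs} also yields \(\E[Z_i(\theta_0)\mid\F_{t_i}] = O(n^{-1/2})\) and \(\E[Z_i(\theta_0)^2\mid\F_{t_i}] = 1 + O(n^{-1/4})\), so \(\tsum_i \E[X_i^2\mid\F_{t_i}] - 1 = O(n^{-1/4})\) deterministically; its \(2\kappa\)-th power is \(O(m_k^{-\kappa})\), verifying the final hypothesis of \autoref{lem:was}~(\ref{it:gd}). Applying that lemma conditionally on \(\F_{t_{ns_k}}\) delivers \(\xi_k, \eta_k, M_k\) with the required properties, and the moment bound matches~\eqref{eq:mk} because \(2\kappa = 2 + \varepsilon/2\) and \(m_k^{-\kappa} \asymp n^{-1/2-\varepsilon/8}\).

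For part~(\ref{it:em2}), the hypothesis \(\widehat X_i = X_{t_i}\) makes \(\mu(\theta, t_i, X_{t_i})\) and \(\sigma(\theta, t_i, X_{t_i})\) both \(\F_{t_i}\)-measurable, yielding the factorisation
\[Z_i(\theta) - \E[Z_i(\theta)\mid\F_{t_i}] = c_i(\theta)\bigl(Y_i - \E[Y_i\mid\F_{t_i}]\bigr), \qquad c_i(\theta) := 1/\sigma(\theta, t_i, X_{t_i}),\]
with \(\abs{c_i(\theta)} = O(1)\) uniformly in \(\theta \in \Theta\) by continuity and positivity of \(\sigma\) on a compact set. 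This removes the \(\theta\)-dependence from the martingale differences \(X_i := n^{-1/2}\varphi_{J,k}(t_i)(Y_i - \E[Y_i\mid\F_{t_i}])\), which satisfy the same fourth-moment-sum hypothesis as before. Applying \autoref{lem:was}~(\ref{it:em}) conditionally on \(\F_{t_{ns_k}}\) with \(\kappa = 1 + \varepsilon/4\) produces \(A_k = O(1)\) and \(M_k\) satisfying~\eqref{eq:mk}, together with \(\tsup_c \E[\exp(u\tsum_l c_l X_{ns_k+l} - \tfrac12 u^2(A_k+M_k))\mid\F_{t_{ns_k}}] \le 1\) over families \(\abs{c_l} = O(1)\). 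Specialising \(c_l := c_{ns_k+l}(\theta)\) and moving \(\tsup_{\theta \in \Theta}\) outside the expectation gives the stated bound.

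To treat all \(k\) jointly, I would enlarge the probability space once with mutually independent auxiliary variables \(U_0, U_1, \dots\) sufficient to carry out the Skorokhod embeddings, and set \(\mathcal G_k := \F_{t_{ns_k}} \vee \sigma(U_0, \dots, U_{k-1})\); the ``independent of \(\F\) given \(\F_n\)'' clause in \autoref{lem:was} then ensures that \(\xi_k\) is standard Gaussian given \(\mathcal G_k\) and that \(\widetilde\alpha_{J,k}(\theta), \xi_k, \eta_k, M_k\) have the requisite \(\mathcal G_{k+1}\)-measurability. The main obstacle is the exponent bookkeeping: one must choose \(\kappa = 1 + \varepsilon/4\) precisely so that, with effective length \(m_k \asymp n^{1/2}\), \autoref{lem:was} outputs a remainder moment of the correct order \(n^{-1/2-\varepsilon/8}\), and verify that the deterministic \(O(n^{-1/4})\) control of the conditional variance provided by \autoref{lem:obs}~(\ref{it:zm}) is strong enough to meet the \(L^{2\kappa}\) Lindeberg-type hypothesis.
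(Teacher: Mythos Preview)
Your proposal is correct and takes essentially the same route as the paper: both arguments apply \autoref{lem:was} block-by-block with \(\kappa = 1+\varepsilon/4\), verify its hypotheses via \autoref{lem:obs}\eqref{it:zm}, use the factorisation \(Z_i(\theta) - \E[Z_i(\theta)\mid\F_{t_i}] = c_i(\theta)(Y_i - \E[Y_i\mid\F_{t_i}])\) for part~\eqref{it:em2}, and glue the couplings together in a filtration \((\mathcal G_k)\) built from \(\F_{s_k}\) and the Skorokhod auxiliary randomness. The paper phrases the filtration construction as an explicit induction maintaining the invariant that \(\mathcal G_k\) is independent of \(\mathcal F\) given \(\mathcal F_{s_k}\), while you pre-fix independent \(U_k\)'s up front; these are equivalent realisations of the same idea.
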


\begin{proof}
  We first prove part~\eqref{it:gd2}, and argue by induction on \(k.\)
  Let \(\mathcal G_0 = \mathcal F_0,\) and suppose that for \(i = 0,
  \dots, k-1\) we have constructed, on an extended probability space,
  \(\sigma\)-algebras \(\mathcal G_{i+1},\) and random variables
  \(\xi_i,\) \(\eta_i,\) \(M_i\) satisfying our conditions.  We
  suppose also that \(\mathcal G_{k}\) has been chosen to be
  independent of \(\mathcal F\) given \(\mathcal F_{s_{k}},\) where
  the times \(s_k\) are given by~\eqref{eq:sk}; we note this condition
  is trivially satisfied for \(\mathcal G_0.\)

  We can then write
  \[n^{1/2}\widetilde \alpha_{J,k}(\theta_0) = \tsum_{i =
    ns_k}^{ns_{k+1}-1} \zeta_{i},\]
  where the \(m \coloneqq n(s_{k+1}-s_k)\) summands
  \[\zeta_{i} \coloneqq n^{-1/2}2^{J/2}(Z_i(\theta_0) -
  \E[Z_i(\theta_0) \mid \mathcal F_{t_{i}}]).\]
  To compute the moments of the \(\zeta_i,\) we may apply
  \autoref{lem:obs}\eqref{it:zm}, noting that since we are only
  interested in \(\theta = \theta_0,\) we may assume \(\Theta\) is
  bounded. We thus have
  \begin{align*}
    \E[\zeta_{i} \mid \mathcal F_{t_{i}}, \mathcal G_{k}]
    &= 0,\\
    \tsum_{i=ns_k}^{ns_{k+1}-1} \E[\zeta_{i}^2 \mid
    \mathcal F_{t_{i}}, \mathcal G_{k}]
    &= 1 + O(m^{-1/2}),\\
    \tsum_{i=ns_k}^{ns_{k+1}-1} \E[\abs{\zeta_{i}}^{4+\varepsilon}
    \mid \mathcal F_{t_{i}}, \mathcal G_{k}]
    &= O(m^{-(1 + \varepsilon/2)}),
  \end{align*}
  using also that the \(\zeta_i\) are independent of \(\mathcal
  G_{k}\) given \(\mathcal F_{t_{i}}.\)

  We may therefore apply \autoref{lem:was}\eqref{it:gd} to the
  variables \(n^{1/2}\widetilde \alpha_{J,k}(\theta_0).\) On a
  further-extended probability space, we obtain random variables
  \(\xi_k,\) \(\eta_k,\) \(M_k\) satisfying the conditions of
  part~\eqref{it:gd2}, independent of \(\mathcal F\) given \(\mathcal
  G_{k}\) and \(\mathcal F_{s_{k+1}}.\) Defining \(\mathcal G_{k+1}\)
  to be the \(\sigma\)-algebra generated by \(\mathcal G_{k},\)
  \(\mathcal F_{s_{k+1}},\) \(\xi_k,\) \(\eta_k\) and \(M_k,\) we
  deduce that \(\mathcal G_{k+1}\) satisfies the conditions of our
  inductive hypothesis.  By induction, we conclude that
  part~\eqref{it:gd2} of our result holds.

  To prove part~\eqref{it:em2}, we argue similarly, noting that the
  random variables
  \[n^{1/2}\widetilde \alpha_{J,k}(\theta) = \tsum_{i =
    ns_k}^{ns_{k+1}-1} c_i(\theta)\widetilde \zeta_{i},\]
  where the \(\mathcal F_{t_{i+1}}\)-measurable summands
  \[\widetilde \zeta_{i} \coloneqq n^{-1/2}2^{J/2}(Y_i - \E[Y_i \mid
  \mathcal F_{t_i}]),\]
  and the \(\mathcal F_{t_{i}}\)-measurable coefficients
  \[c_i(\theta) \coloneqq 1/\sigma(\theta,t_{i}, X_{t_{i}}).\]
  As the function \(\sigma\) is continuous and positive, and
  \(\theta\) and \(X_t\) are bounded, we have the variables
  \(c_i(\theta) = O(1).\) We may thus apply
  \autoref{lem:was}\eqref{it:em}, producing random variables \(A_k,\)
  \(M_k\) satisfying the conditions of part~\eqref{it:em2}. The result
  then follows as before.
\end{proof}

We now prove a lemma bounding the variance of our estimated scaling
and wavelet coefficients \(\widehat \alpha_{0,0}(\theta),\) \(\widehat
\beta_{j,k}(\theta).\)

\begin{lemma}
  \label{lem:var}
  Suppose the \(\widehat X_i = O(1),\) and for \(j = 0, \dots, J-1,\)
  \(k = 0, \dots, 2^j-1\) and \(\theta \in \Theta,\) define the
  wavelet-coefficient variance terms
  \[\widetilde \beta_{j,k}(\theta) \coloneqq n^{-1}\tsum_{i = 0}^{n-1}
  \psi_{j,k}(t_{i})(Z_i(\theta) - \E[Z_i(\theta) \mid \mathcal
  F_{t_{i}}]).\]
  Similarly define scaling-coefficient variance terms \(\widetilde
  \alpha_{0,0}(\theta)\) using \(\varphi_{0,0}.\)

  \begin{enumerate}
  \item \label{it:nv} Under \(H_0,\) suppose \(\widehat \theta -
    \theta_0 = O(n^{-1/2}).\) Then on a suitably-extended probability
    space, we have real random variables \(\widetilde \xi_{j,k},
    \widetilde \eta_{j,k},\widetilde \upsilon_{j,k}\) such that
    \begin{align*}
      n^{1/2}\widetilde \alpha_{0,0}(\widehat \theta)
      &= \widetilde \xi_{-1,0} +
        \widetilde \eta_{-1,0} + \widetilde \upsilon_{-1,0};\\
      n^{1/2}\widetilde \beta_{j,k}(\widehat \theta)
      &= \widetilde \xi_{j,k} + \widetilde \eta_{j,k} + \widetilde \upsilon_{j,k};
    \end{align*}
    the \(\widetilde \xi_{j,k}\) are independent standard Gaussian;
    and for some \(\varepsilon' > 0,\)
    \[\tmax_{j,k}\, \abs{\widetilde \eta_{j,k}} =
    O_p(n^{-\varepsilon'}), \qquad \tmax_{j,k}\,2^{j/2}\abs{\widetilde
      \upsilon_{j,k}} = O_p(1).\]

  \item \label{it:av}
    Under \(H_1,\) suppose \(\Theta\) is bounded. Then
    \[\tsup_{j,k,\theta \in \Theta}\, \abs{\widetilde
      \alpha_{0,0}(\theta)},\,\abs{\widetilde \beta_{j,k}(\theta)} =
    O_p(n^{-1/2}\log(n)^{1/2}).\]
  \end{enumerate}  
\end{lemma}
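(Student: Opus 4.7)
The plan is to reduce both parts to controlling the finest-scale coefficients $\widetilde\alpha_{J,\ell}(\theta),$ for which Lemma~\ref{lem:alp} already supplies the needed Gaussian decomposition or uniform exponential moments. Writing $c^{(j,k)}_\ell \coloneqq \tint_0^1 \varphi_{J,\ell}\psi_{j,k}$ (with the obvious variant using $\varphi_{0,0}$ for the scaling coefficient at level $-1,0$), the fast wavelet transform gives $n^{1/2}\widetilde\beta_{j,k}(\theta) = \tsum_\ell c^{(j,k)}_\ell \, n^{1/2}\widetilde\alpha_{J,\ell}(\theta);$ since both $\{\varphi_{J,\ell}\}_\ell$ and $\{\varphi_{0,0}\}\cup\{\psi_{j,k}\}_{j<J,k}$ are orthonormal bases of the same scale-$J$ subspace, the map $(\widetilde\alpha_{J,\ell})_\ell \mapsto (\widetilde\alpha_{0,0}, (\widetilde\beta_{j,k})_{j,k})$ is orthogonal.

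For part~\eqref{it:nv}, I first split $n^{1/2}\widetilde\beta_{j,k}(\widehat\theta) = n^{1/2}\widetilde\beta_{j,k}(\theta_0) + \widetilde\upsilon_{j,k},$ with $\widetilde\upsilon_{j,k}$ defined as $n^{1/2}$ times the difference, and apply Lemma~\ref{lem:alp}\eqref{it:gd2} to decompose $n^{1/2}\widetilde\alpha_{J,\ell}(\theta_0) = \xi_\ell + \eta_\ell.$ Each $\xi_\ell$ is conditionally $N(0,1)$ given $\mathcal G_\ell \supseteq \sigma(\xi_0,\dots,\xi_{\ell-1}),$ so the $\xi_\ell$ are in fact i.i.d.\ standard Gaussian; orthogonality of the wavelet transform then makes $\widetilde\xi_{j,k} \coloneqq \tsum_\ell c^{(j,k)}_\ell \xi_\ell$ an i.i.d.\ standard Gaussian family (including the $(-1,0)$ coordinate). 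For $\widetilde\eta_{j,k} \coloneqq \tsum_\ell c^{(j,k)}_\ell \eta_\ell,$ iterating the tower property on the exponential bound of Lemma~\ref{lem:alp}\eqref{it:gd2} yields $\E[\exp(u\widetilde\eta_{j,k} - \tfrac12 u^2 M)] \le 1$ with $M \coloneqq \tmax_\ell M_\ell,$ since $\tsum_\ell (c^{(j,k)}_\ell)^2 M_\ell$ is a convex combination of the $M_\ell.$ A union bound using~\eqref{eq:mk} gives $M = O_p(n^{-\gamma})$ for some $\gamma > 0,$ so Lemma~\ref{lem:tec} yields the claimed $\tmax_{j,k}|\widetilde\eta_{j,k}| = O_p(n^{-\varepsilon'})$ for some $\varepsilon' > 0.$

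For the remainder $\widetilde\upsilon_{j,k},$ the Lipschitz estimate of Lemma~\ref{lem:obs}\eqref{it:yl} combined with $\widehat\theta - \theta_0 = O_p(n^{-1/2})$ gives both $|Z_i(\widehat\theta) - Z_i(\theta_0)|$ and its $\F_{t_i}$-conditional expectation of size $O_p(n^{-1/2})(1+|Y_i|).$ Plugging into the definition and using $|\psi_{j,k}| \le 2^{j/2}$ on its support $I_{j,k},$ I obtain $2^{j/2}|\widetilde\upsilon_{j,k}| \le C\cdot O_p(n^{-1/2}) \cdot 2^j n^{-1}\tsum_{i \in I_{j,k}}(1+|Y_i|),$ and Cauchy--Schwarz together with the block-wise estimate of Lemma~\ref{lem:obs}\eqref{it:ys} (summed over the $2^{J-j}$ level-$J$ blocks inside $I_{j,k}$) bounds this by $O_p(1)$ uniformly in $(j,k).$ For part~\eqref{it:av} I use Lemma~\ref{lem:alp}\eqref{it:em2} in place of \eqref{it:gd2}; this already provides exponential moments of $n^{1/2}\widetilde\alpha_{J,\ell}(\theta)$ uniform in $\theta$ with parameter $A_\ell+M_\ell,$ $A_\ell=O(1).$ Combining along the wavelet transform exactly as above gives $\E[\exp(u n^{1/2}\widetilde\beta_{j,k}(\theta) - \tfrac12 u^2 (A+M))]\le 1$ with $A=O(1)$ and $M=O_p(n^{-\gamma}).$ To handle the supremum over $\theta,$ I discretise $\Theta$ by a polynomial-size $\delta$-net with $\delta=n^{-C}$ for $C$ large, apply Lemma~\ref{lem:tec} to the $O(n^{1/2+pC})$ test points to obtain an $O_p(n^{-1/2}\log(n)^{1/2})$ bound on the net, and close the gap to arbitrary $\theta \in \Theta$ using the same Lipschitz plus Lemma~\ref{lem:obs}\eqref{it:ys} argument as in part~\eqref{it:nv}.

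The hardest step will be the $\widetilde\upsilon_{j,k}$ bound (and its analogue in the net-to-$\Theta$ step of part~\eqref{it:av}): the $\widehat\theta$-induced error must be promoted through a Lipschitz constant of order $1+|Y_i|,$ and because there is no uniform control on $\tmax_i|Y_i|,$ any naive argument loses a factor of at least $n^\kappa.$ Routing the estimate instead through the block-wise $\tsum Y_i^2$ bound of Lemma~\ref{lem:obs}\eqref{it:ys}, combined with Cauchy--Schwarz at each dyadic scale, is what produces a uniform-in-$(j,k)$ conclusion and closes the argument.
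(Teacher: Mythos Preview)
Your approach matches the paper's closely: the same wavelet-transform reduction to the level-$J$ coefficients, the same use of Lemma~\ref{lem:alp} for the Gaussian/exponential decomposition, the same $M=\tmax_\ell M_\ell$ trick combined with Lemma~\ref{lem:tec}, and the same Lipschitz-plus-Cauchy--Schwarz-plus-Lemma~\ref{lem:obs}\eqref{it:ys} argument for the $\widetilde\upsilon_{j,k}$ term and the net-to-$\Theta$ step.

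There is one genuine gap in part~\eqref{it:av}. Lemma~\ref{lem:alp}\eqref{it:em2} is stated under the hypothesis $\widehat X_i = X_{t_i}$, not merely $\widehat X_i = O(1)$ as in Lemma~\ref{lem:var}. You cannot apply it directly. The paper first reduces to this case: defining $Z_i'(\theta)$ with $X_{t_i}$ in place of $\widehat X_i$ and the corresponding $\widetilde\beta_{j,k}'(\theta)$, it shows
\[
  \tsup_{j,k,\theta\in\Theta}\abs{\widetilde\beta_{j,k}(\theta)-\widetilde\beta_{j,k}'(\theta)} = O_p(n^{-1/2}),
\]
using the Lipschitz bound of Lemma~\ref{lem:obs}\eqref{it:yl} in the $\widehat X$-variable, Cauchy--Schwarz to separate $(1+Y_i^2)$ from $\norm{\widehat X_i - X_{t_i}}^2$, the moment bound $\E\bigl[\tsum_i\norm{\widehat X_i-X_{t_i}}^2\bigr]=O(1)$ from~\eqref{eq:xha}, and Lemma~\ref{lem:obs}\eqref{it:ys}. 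This is the same flavour of argument you already give for $\widetilde\upsilon_{j,k}$, so once you notice the missing hypothesis the fix is routine; but as written, your proposal invokes Lemma~\ref{lem:alp}\eqref{it:em2} under assumptions it does not cover.

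A minor arithmetic point: in your displayed bound for $2^{j/2}\abs{\widetilde\upsilon_{j,k}}$ you seem to have dropped the factor $n^{1/2}$ coming from the definition $\widetilde\upsilon_{j,k}=n^{1/2}(\widetilde\beta_{j,k}(\widehat\theta)-\widetilde\beta_{j,k}(\theta_0))$; the correct prefactor is $O_p(n^{-1})\cdot 2^j$ rather than $O_p(n^{-1/2})\cdot 2^j n^{-1}$. This does not affect the conclusion, since the block estimate still yields $O_p(1)$.
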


\begin{proof}
  We will consider the wavelet-coefficient variance terms \(\widetilde
  \beta_{j,k}(\theta);\) we note we may include scaling-coefficient
  variance terms \(\widetilde \alpha_{0,0}(\theta)\) similarly.  To
  prove part~\eqref{it:nv}, we then apply
  \autoref{lem:alp}\eqref{it:gd2}.  We obtain a filtration \(\mathcal
  G_l,\) and variables \(M_l,\) \(\xi_{l}\) and \(\eta_{l}\) as in the
  statement of the lemma.  Since
  \[\widetilde \beta_{j,k}(\theta) =
  \tsum_l b_{j,k,l} \widetilde \alpha_{J,l}(\theta),\]
  where the coefficients
  \[b_{j,k,l} \coloneqq \tint_0^1 \psi_{j,k}\varphi_{J,l},\]
  we have
  \[n^{1/2}\widetilde \beta_{j,k}(\widehat \theta) = \widetilde \xi_{j,k} +
  \widetilde \eta_{j,k} + \widetilde \upsilon_{j,k},\] for terms
  \[\widetilde \xi_{j,k} \coloneqq \tsum_{l}  b_{j,k,l} \xi_{l},\qquad
  \widetilde \eta_{j,k} \coloneqq \tsum_{l} b_{j,k,l} \eta_{l},\]
  and
  \[\widetilde \upsilon_{j,k} \coloneqq n^{1/2}(\widetilde
  \beta_{j,k}(\widehat \theta) - \widetilde
  \beta_{j,k}(\theta_0)).\]

  We first describe the terms \(\widetilde \xi_{j,k}.\) Since the
  \(\xi_l\) are jointly centred Gaussian, so are the \(\widetilde
  \xi_{j,k}.\) Furthermore, we have
  \begin{align}
    \notag \Cov[\widetilde \xi_{j,k} \widetilde \xi_{j',k'}] &= 
    \tsum_l b_{j,k,l}b_{j',k',l}\\
    \notag &= \tint_0^1(\tsum_l b_{j,k,l}\varphi_{J,l})(\tsum_{l'} b_{j',k',l'}\varphi_{J,l'})\\
    \notag &= \tint_0^1
    \psi_{j,k}\psi_{j',k'}\\
    \label{eq:bs} &= 1_{(j,k)=(j',k')}.
  \end{align}
  We deduce that the \(\widetilde \xi_{j,k}\) are independent standard
  Gaussian.

  We next bound the \(\widetilde \eta_{j,k}.\) Setting
  \[M \coloneqq \tmax_l M_l,\] we have that
  \[\E[M^{2+\varepsilon/2}] \le \tsum_l
  \E[M_l^{2+\varepsilon/2}] = O(n^{-\varepsilon/8}),\]
  so \(M = O_p(n^{-\varepsilon'})\) for some \(\varepsilon' > 0.\)
  Using~\eqref{eq:bs}, we also have
  \begin{align*}
    \E[\exp(u\widetilde \eta_{j,k} - \tfrac12 u^2
    M)]
    &\le \E[\tprod_l\exp(ub_{j,k,l}\eta_{l} -
    \tfrac12 u^2 b_{j,k,l}^2M_l)]\\
    &\le 1.
  \end{align*}
  The desired result follows by applying \autoref{lem:tec}. 

  Finally, we control the \(\widetilde \upsilon_{j,k}.\) Since we are only
  interested in \(\theta = \theta_0,\,\widehat \theta,\) we may assume
  \(\Theta\) is bounded. For \(\theta,\theta' \in \Theta,\)
  \(\abs{\theta - \theta'} = O(n^{-1/2}),\) we then have
  \begin{align}
    \notag \mel\tsup_{j,k,\theta,\theta'} 2^{j/2}
    \abs{\widetilde \beta_{j,k}(\theta) - \widetilde
    \beta_{j,k}(\theta')}\\
    \notag &= \tmax_{j,k} O(n^{-3/2}2^{j/2})
      \tsum_{i=0}^{n-1} \abs{\psi_{j,k}(t_{i})}(1 + \abs{Y_i}),\\
    \intertext{using \autoref{lem:obs}\eqref{it:yl},}
    \notag &= O(n^{-1/2})(1 + \tmax_k n^{-1/2} \tsum_{i=ns_k}^{ns_{k+1}-1} \abs{Y_i})\\
    \notag &= O(n^{-1/2})(1 + (\tmax_k n^{-1/2}
             \tsum_{i=ns_k}^{ns_{k+1}-1} Y_i^2)^{1/2}),\\
    \intertext{by Cauchy-Schwarz,}
    \label{eq:bt} &= O_p(n^{-1/2}),
  \end{align}
  using \autoref{lem:obs}\eqref{it:ys}.  We deduce that
  \[\tsup_{j,k}2^{j/2}\abs{\widetilde \upsilon_{j,k}} = O_p(1).\]

  To prove part~\eqref{it:av}, we first claim we may assume the
  \(\widehat X_i = X_{t_{i}}.\) To prove the claim, we define terms
  \[Z_i'(\theta) \coloneqq (Y_i - \mu(\theta, t_{i},
  X_{t_{i}}))/\sigma(\theta, t_{i}, X_{t_{i}}),\]
  and
  \[\widetilde \beta_{j,k}'(\theta) \coloneqq n^{-1}\tsum_{i = 0}^{n-1}
  \psi_{j,k}(t_{i})(Z_i'(\theta) - \E[Z_i'(\theta) \mid \mathcal
  F_{t_{i}}]).\]
  We then have
  \begin{align*}
    \mel
    \tsup_{j,k,\theta \in \Theta} \abs{\widetilde
    \beta_{j,k}(\theta) - \widetilde \beta_{j,k}'(\theta)}\\
 &= O(n^{-1})\tmax_{j,k}\tsum_{i=0}^{n-1} \abs{\psi_{j,k}(t_{i})}(1 +
   \abs{Y_i})\norm{\widehat X_i - X_{t_{i}}},\\
    \intertext{using \autoref{lem:obs}\eqref{it:yl},}
 &= \bal O(n^{-1/2})(\tmax_{j,k}n^{-1}\tsum_{i=0}^{n-1} \psi_{j,k}^2(t_{i})(1 +
   Y_i^2))^{1/2}\\
 &\times (\tsum_{i=0}^{n-1}\norm{\widehat X_i -
   X_{t_{i}}}^2)^{1/2},\eal\\
    \intertext{by Cauchy-Schwarz,}
 &= O_p(n^{-1/2})(1 + \tmax_{k}n^{-1/2}\tsum_{i=ns_k}^{ns_{k+1}-1} Y_i^2)^{1/2},\\
    \intertext{since \(\E[\tsum_{i=0}^{n-1} \norm{\widehat X_i -
    X_{t_{i}}}^2] = O(1),\)}
 &= O_p(n^{-1/2}),
  \end{align*}
  using \autoref{lem:obs}\eqref{it:ys}.

  We may thus assume the \(\widehat X_i = X_{t_{i}},\) and so apply
  \autoref{lem:alp}\eqref{it:em2}. On an extended probability space,
  we obtain a filtration \(\mathcal G_l,\) constants \(A_l = O(1),\)
  and variables \(M_l\) as in the statement of the lemma. Setting
  \[M \coloneqq \tmax_l (A_l + M_l),\]
  we obtain that \(M = O_p(1),\) and
  \[\tsup_{\theta \in \Theta} \E[\exp(un^{1/2}\widetilde \beta_{j,k}(\theta) - \tfrac12 u^2
    M)] \le 1,\] arguing as in part~\eqref{it:gd2}.  Letting
  \(\underline \Theta_n\) denote a \(n^{-1/2}\)-net for
  \(\Theta \subset \R^p,\) of size \(O(n^{p/2}),\) we thus have
  \begin{align*}
    \tmax_{j,k,\theta \in \underline \Theta_n}
    \abs{\widetilde \beta_{j,k}(\theta)} &= O_p(n^{-1/2}\log(n^{p/2})^{1/2})\\
    &= O_p(n^{-1/2}\log(n)^{1/2}),
  \end{align*}
  using \autoref{lem:tec}.

  Next, for any \(\theta \in \Theta,\) we have a point
  \(\underline \theta \in \underline \Theta_n\) with \(\theta -
  \underline \theta = O(n^{-1/2}).\) Using~\eqref{eq:bt}, we deduce
  that
  \[\tsup_{j,k,\theta \in \Theta}
    \abs{\widetilde \beta_{j,k}(\theta) - \widetilde
    \beta_{j,k}(\underline \theta)} = O_p(n^{-1/2}).\]
  We conclude that
  \[\tsup_{j,k,\theta \in \Theta} \abs{\widetilde \beta_{j,k}(\theta)} =
  O_p(n^{-1/2}\log(n)^{1/2}).\qedhere\]
\end{proof}

Next, we prove a lemma bounding the bias of our estimated scaling and
wavelet coefficients \(\widehat \alpha_{0,0}(\theta),\) \(\widehat
\beta_{j,k}(\theta).\)

\begin{lemma}
  \label{lem:bia}
  Suppose the \(\widehat X_i = O(1),\) and for \(j = 0, \dots, J-1,\)
  \(k = 0, \dots, 2^J-1\) and \(\theta \in \Theta,\) define the
  wavelet-coefficient bias terms
  \[\overline \beta_{j,k}(\theta) \coloneqq n^{-1}\tsum_{i = 0}^{n-1}
  \psi_{j,k}(t_{i})\E[Z_i(\theta) \mid \mathcal F_{t_{i}}] -
  \beta_{j,k}(\theta),\]
  Similarly define scaling-coefficient bias terms \(\overline
  \alpha_{0,0}(\theta)\) using \(\varphi_{0,0}.\)

  \begin{enumerate}
  \item \label{it:nb} Under \(H_0,\) suppose \(\widehat \theta -
    \theta_0 = O(n^{-1/2}).\) Then
    \[\tmax_{j,k} \abs{\overline \alpha_{0,0}(\widehat \theta)},
    \,2^{j/2}\abs{\overline \beta_{j,k}(\widehat \theta)} =
    O_p(n^{-1/2}).\]
  \item \label{it:ab}
    Under \(H_1,\) suppose \(\Theta\) is bounded. Then
    \[\tsup_{j,k,\theta \in \Theta} \abs{\overline \alpha_{0,0}(\theta)},\,\abs{\overline \beta_{j,k}(\theta)} =
    O_p(n^{-1/2}).\]
  \end{enumerate}
\end{lemma}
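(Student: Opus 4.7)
The plan is to split $\overline\beta_{j,k}(\theta)$ into a conditional-mean error and a quadrature error and treat each separately. Write $\overline\beta_{j,k}(\theta) = T_1(\theta) + T_2(\theta),$ where
\[T_1(\theta) \coloneqq n^{-1}\tsum_{i=0}^{n-1} \psi_{j,k}(t_i)(\E[Z_i(\theta)\mid\mathcal F_{t_i}] - S_{t_i}(\theta)),\]
\[T_2(\theta) \coloneqq n^{-1}\tsum_{i=0}^{n-1} \psi_{j,k}(t_i) S_{t_i}(\theta) - \beta_{j,k}(\theta),\]
so that $T_1$ captures the error in replacing $\E[Z_i\mid\mathcal F_{t_i}]$ by $S_{t_i}(\theta)$ and $T_2$ is the left-Riemann-sum quadrature error for $\tint_0^1 \psi_{j,k}S(\theta)\,dt;$ the scaling-coefficient terms $\overline\alpha_{0,0}$ are handled identically.

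The term $T_1$ is bounded uniformly using \autoref{lem:obs}\eqref{it:zm}, which gives $|\E[Z_i(\theta)\mid\mathcal F_{t_i}] - S_{t_i}(\theta)| = O(n^{-1/2})$ uniformly in $\theta \in \Theta.$ Since $\psi_{j,k}$ has supremum norm $2^{j/2}$ and its support contains at most $n2^{-j}+1$ grid points, this yields $|T_1(\theta)| = O(2^{-j/2}n^{-1/2})$ uniformly in $(j,k,\theta).$ This handles $T_1$ in both parts of the lemma, including the $2^{j/2}$-rescaling required under $H_0.$

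For $T_2,$ I would rewrite
\[T_2(\theta) = -\tint_0^1 \psi_{j,k}(t)(S_t(\theta) - \underline S_t(\theta))\,dt + R_{j,k}(\theta),\]
where $\underline S_t(\theta) \coloneqq S_{t_i}(\theta)$ for $t \in [t_i,t_{i+1})$ and $|R_{j,k}(\theta)| = O(2^{j/2}/n) = O(n^{-3/4})$ is a boundary term from the at most three intervals $[t_i,t_{i+1})$ on which $\psi_{j,k}$ is non-constant (using $\norm{S}_\infty = O(1)$). Under $H_0,$ $S_t(\theta_0)=0$ combined with the Lipschitz-in-$\theta$ bound from \autoref{lem:obs}\eqref{it:sl} gives $\norm{S(\widehat\theta)}_\infty = O_p(n^{-1/2}),$ so $|T_2(\widehat\theta)| \le 2\norm{\psi_{j,k}}_{L^1}\norm{S(\widehat\theta)}_\infty + |R_{j,k}| = O_p(2^{-j/2}n^{-1/2}),$ and part~\eqref{it:nb} follows on multiplying by $2^{j/2}.$

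Under $H_1,$ I would instead apply Parseval: orthonormality of the Haar system gives
\[\abs{\tint_0^1 \psi_{j,k}(t)(S_t(\theta) - \underline S_t(\theta))\,dt}^2 \le \norm{S(\theta) - \underline S(\theta)}_{L^2}^2\]
simultaneously for every $(j,k),$ so it suffices to show $\tsup_{\theta\in\Theta}\norm{S(\theta) - \underline S(\theta)}_{L^2}^2 = O_p(n^{-1}).$ This uniform-in-$\theta$ step is the main obstacle: a naive net-and-union bound from the pointwise second moment $\E\norm{S(\theta) - \underline S(\theta)}_{L^2}^2 = O(n^{-1})$ loses a polynomial factor in $n,$ and higher moments do not sharpen the rate because $|\mu_{t+h}-\mu_t|$ is uniformly bounded rather than subgaussian, so $\E|\mu_{t+h}-\mu_t|^q$ retains the rate $O(h)$ for every $q\ge 2.$ My resolution is to interchange the supremum and the integral: by the uniform-in-$\theta$ Lipschitz estimate from \autoref{lem:obs}\eqref{it:sl},
\[\tsup_{\theta\in\Theta}(S_t(\theta) - \underline S_t(\theta))^2 \le C\bigl((t - t_i)^2 + (\mu_t - \mu_{t_i})^2 + \norm{X_t - X_{t_i}}^2\bigr)\]
for $t \in [t_i,t_{i+1}),$ with $C$ independent of $\theta;$ the right-hand side has expectation $O(t - t_i) = O(n^{-1})$ by~\eqref{eq:wsm}. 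Integrating over $t$ and applying Markov then gives $\tsup_{\theta\in\Theta}\norm{S(\theta) - \underline S(\theta)}_{L^2}^2 = O_p(n^{-1}),$ completing part~\eqref{it:ab}.
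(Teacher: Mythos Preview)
Your proof is correct, and the overall skeleton---splitting into a conditional-mean error, a quadrature error $\tint \psi_{j,k}(S_{\underline t}-S_t)\,dt,$ and a boundary remainder---matches the paper's decomposition into $\overline\beta_{j,k}-\underline\beta_{j,k}$ and $\underline\beta_{j,k}.$ Your treatment of part~\eqref{it:ab} is essentially identical to the paper's: both push the supremum over $\theta$ inside via the Lipschitz bound from \autoref{lem:obs}\eqref{it:sl}, leaving the single $\theta$-free quantity $\tint_0^1(\abs{\mu_{\underline t}-\mu_t}^2+\norm{X_{\underline t}-X_t}^2+n^{-2})\,dt$ whose expectation is $O(n^{-1}),$ and then apply Cauchy--Schwarz (your ``Parseval'' is the same bound, since $\norm{\psi_{j,k}}_{L^2}=1$).

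The genuine difference is in part~\eqref{it:nb}. You exploit directly that under $H_0$ one has $S_t(\theta_0)\equiv 0,$ whence $\norm{S(\widehat\theta)}_\infty=O_p(n^{-1/2})$ by Lipschitz in $\theta,$ and the crude estimate $\abs{\tint\psi_{j,k}(S-\underline S)}\le 2\norm{\psi_{j,k}}_{L^1}\norm{S(\widehat\theta)}_\infty$ already gives the required $O_p(2^{-j/2}n^{-1/2}).$ The paper instead writes $\underline\beta_{j,k}(\theta_0)=\tsum_i\zeta_{i,j,k},$ expands $S_t(\theta_0)-S_{t_i}(\theta_0)$ by Taylor, computes conditional first and second moments of $\zeta_{i,j,k},$ and sums over $(j,k)$ to obtain $\E[\tmax_{j,k}\underline\beta_{j,k}(\theta_0)^2]=O(n^{-3/2}).$ Your route is shorter and entirely adequate for the lemma as stated; the paper's second-moment argument is more elaborate but has the advantage that it would continue to work in a setting where $S_t(\theta_0)$ is not identically zero but merely a semimartingale satisfying~\eqref{eq:wsm} (indeed, under $H_0$ as written each $\zeta_{i,j,k}$ is actually zero, so the paper's computation is stronger than needed).
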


\begin{proof}
  We will bound the wavelet-coefficient bias terms \(\overline
  \beta_{j,k}(\theta);\) we note we may include the
  scaling-coefficient bias terms \(\overline \alpha_{0,0}(\theta)\)
  similarly.  For \(t \in [0,1],\) define \(\underline t \coloneqq
  \lfloor n t \rfloor / n,\) and set
  \[\underline \beta_{j,k}(\theta) \coloneqq
  \tint_0^1 \psi_{j,k}(t)(S_{\underline t}(\theta) -
  S_t(\theta))\,dt.\]
  In each part~\eqref{it:nb} and~\eqref{it:ab}, we will show that
  \(\overline \beta_{j,k}(\theta)\) is close to \(\underline
  \beta_{j,k}(\theta),\) which is small.

  We note that in either part we may assume \(\Theta\) is bounded,
  since in part~\eqref{it:nb}, we are only interested in \(\theta =
  \theta_0,\,\widehat \theta.\) We then have
  \begin{align}
    \notag \abs{\overline \beta_{j,k}(\theta) - \underline
    \beta_{j,k}(\theta)} &\le \bal n^{-1}\tsum_{i=0}^{n-1}
    \abs{\psi_{j,k}(t_{i})}\abs{\E[Z_i(\theta) \mid
      \mathcal
      F_{t_{i}}] - S_{t_i}(\theta)}\\
    &+ \tint_0^1 \abs{\psi_{j,k}(t) - \psi_{j,k}(\underline
      t)}\abs{S_{\underline t}(\theta)}\,dt\eal\\
    \label{eq:ob} &= O(n^{-1/2}2^{-j/2}),
  \end{align}
  using \hyperref[lem:obs]{Lemmas}~\ref{lem:obs}\eqref{it:sl}
  and~\eqref{it:zm}.  It thus remains to bound the \(\underline
  \beta_{j,k}(\theta).\)

  To prove part~\eqref{it:nb}, we note that
  \[\underline \beta_{j,k}(\theta_0) = \tsum_{i=0}^{n-1} \zeta_{i,j,k},\]
  where the \(\mathcal F_{t_{i+1}}\)-measurable summands
  \[\zeta_{i,j,k} \coloneqq -\tint_{t_{i}}^{t_{i+1}} \psi_{j,k}(t)
  (S_{t}(\theta_0) - S_{t_i}(\theta_0))\,dt.\]
  Using \autoref{lem:obs}\eqref{it:sl} and Taylor's theorem, we also
  have that
  \begin{align*}S_{t}(\theta_0) - S_{t_i}(\theta_0)
    = \bal c_{i}(\mu_t -
  \mu_{t_{i}}) + d_{i}^T(X_t - X_{t_{i}})\\
  &+ O(\abs{\mu_t -
  \mu_{t_{i}}}^2 + \norm{X_t - X_{t_{i}}}^2 + n^{-1}),\eal
  \end{align*}
  for bounded \(\mathcal F_{t_i}\)-measurable random variables \(c_i
  \in \R,\) \(d_i \in \R^q.\)

  We deduce that
  \[\E[\zeta_{i,j,k} \mid \mathcal F_{t_{i}}]
  = O(n^{-2}2^{j/2}),\]
  and similarly
  \[ \Var[\zeta_{i,j,k} \mid \mathcal F_{t_{i}}] \le
  \E[\zeta_{i,j,k}^2 \mid \mathcal F_{t_{i}}] = O(n^{-3}2^{j}).\]
  Furthermore, for fixed \(j\) and \(k,\) we have that all but
  \(O(n2^{-j})\) of the \(\zeta_{i,j,k}\) are almost-surely zero.  We
  thus have
  \[\E[\underline \beta_{j,k}(\theta_0)^2] = O(n^{-2}).\]

  We deduce that
  \begin{align*}
    \E[\tmax_{j,k} \underline \beta_{j,k}(\theta_0)^2]
    &\le \tsum_{j,k}\E[\underline \beta_{j,k}(\theta_0)^2]\\
    &\le O(n^{-2}) \tsum_j 2^j\\
    &= O(n^{-3/2}),
  \end{align*}
  so \(\tmax_{j,k} \abs{\underline \beta_{j,k}(\theta_0)} =
  O_p(n^{-3/4}).\) We also have
  \[\underline \beta_{j,k}(\theta_0) - \underline
  \beta_{j,k}(\widehat \theta) = O(n^{-1/2}2^{-j/2}),\] using
  \autoref{lem:obs}\eqref{it:sl}. We conclude that
  \begin{align*}
    \tmax_{j,k} 2^{j/2}\abs{\overline
      \beta_{j,k}(\widehat \theta)} &\le \bal\tmax_{j,k}
    2^{j/2}\abs{\underline \beta_{j,k}(\theta_0)}\\
    &+ \tmax_{j,k} 2^{j/2}\abs{\underline
      \beta_{j,k}(\theta_0) - \underline \beta_{j,k}(\widehat \theta)}\\
    &+ \tmax_{j,k} 2^{j/2}\abs{\underline
      \beta_{j,k}(\widehat \theta) - \overline
      \beta_{j,k}(\widehat \theta)}\eal\\
    &= O_p(n^{-1/2}),
  \end{align*}
  using~\eqref{eq:ob}.

  To prove part~\eqref{it:ab}, using \autoref{lem:obs}\eqref{it:sl},
  we have
  \[S_{\underline t}(\theta) - S_{t}(\theta) = O(\abs{\mu_{\underline
      t} - \mu_t} + \norm{X_{\underline t} - X_t} + n^{-1}).\] We deduce that
  \begin{align*}
    \mel \tsup_{j,k,\theta \in \Theta}\abs{\underline
      \beta_{j,k}(\theta)}\\
    &= O(1) \tsup_{j,k} \tint_0^1 \abs{\psi_{j,k}(t)}
    (\abs{\mu_{\underline t} - \mu_t} + \norm{X_{\underline
        t} - X_t} + n^{-1})\,dt\\
    &= \bal O(1) (\tsup_{j,k} \tint_0^1 \psi^2_{j,k}(t)\,dt)^{1/2}\\
      &\times (\tint_0^1 (\abs{\mu_{\underline t} - \mu_t}^2 +
    \norm{X_{\underline t} - X_{t}}^2 + n^{-2})\,dt)^{1/2},\eal\\
    \intertext{by Cauchy-Schwarz,} &= O_p(n^{-1/2}),
  \end{align*}
  since \(\tint_0^1 \psi_{j,k}^2(t)\,dt = 1,\) and
  \[\E[\tint_0^1 (\abs{\mu_{\underline
      t} - \mu_t}^2 + \norm{X_{\underline t} - X_t}^2 + n^{-2})\,dt] =
    O(n^{-1}).\] Using~\eqref{eq:ob}, we conclude that
  \[\tsup_{j,k,\theta \in \Theta} \abs{\overline
    \beta_{j,k}(\theta)} = O_p(n^{-1/2}).\qedhere\]
\end{proof}

We can now prove our limit theorem for the statistic \(\widehat
T(\widehat \theta).\)

\begin{proof}[Proof of \autoref{thm:lim}]
  We first note that our estimated scaling and wavelet coefficients are
  equivalently given by
  \[\widehat \alpha_{0,0}(\theta) = n^{-1} \tsum_{i=0}^{n-1}
  \varphi_{0,0}(t)Z_i(\theta), \qquad \widehat \beta_{j,k}(\theta) =
  n^{-1} \tsum_{i=0}^{n-1} \psi_{j,k}(t)Z_i(\theta).\]
  We may thus make the variance-bias decomposition
  \begin{align*}
    \widehat \alpha_{0,0}(\theta) - \alpha_{0,0}(\theta) &= \widetilde
    \alpha_{0,0}(\theta) + \overline \alpha_{0,0}(\theta),\\
    \widehat \beta_{j,k}(\theta) - \beta_{j,k}(\theta) &= \widetilde
    \beta_{j,k}(\theta) + \overline \beta_{j,k}(\theta),
  \end{align*}
  where the terms \(\widetilde \alpha_{0,0},\) \(\overline
  \alpha_{0,0},\) \(\widetilde \beta_{j,k}\) and \(\overline
  \beta_{j,k}\) are defined by
  \hyperref[lem:var]{Lemmas}~\ref{lem:var} and~\ref{lem:bia}.  We will
  proceed to bound the distribution of \(\widehat T(\widehat \theta)\)
  using these lemmas.

  We begin by showing we may assume the estimated covariates
  \(\widehat X_i = O(1).\) We note that
  \[\E[\tmax_i \norm{\widehat X_i}^2] \le \E[\tsup_t \norm{X_t}^2]
  + \tsum_i \E[\norm{\widehat X_i - X_{t_{i}}}^2] = O(1),\]
  so \(\max_i \norm{\widehat X_i} = O_p(1).\) For a constant \(R >
  0,\) define the variables
  \[\widetilde X_i \coloneqq
  \begin{cases} \widehat X_i,
    &\norm{\widehat X_i} \le R,\\X_{t_{i}},
    &\text{otherwise}.
  \end{cases}\]
  Then as \(R \to \infty,\) the probability that the \(\widetilde
  X_i\) and \(\widehat X_i\) agree tends to one, uniformly in \(n.\)
  It thus suffices to prove our results replacing the \(\widehat X_i\)
  with the \(\widetilde X_i;\) equivalently, we may assume the
  \(\widehat X_i = O(1).\)

  We now prove part~\eqref{it:h0}. Since \(\widehat \theta - \theta_0
  = O_p(n^{-1/2}),\) we may similarly assume \(\widehat \theta -
  \theta_0 = O(n^{-1/2}).\) Let \(J_2 = \lfloor J / 2 \rfloor,\) and write
  \[\widehat T(\theta) = \max(\overline T(\theta),\widetilde T(\theta)),\]
  where the terms
  \begin{align*}
    \overline T(\theta) &\coloneqq \tmax_{0\le j< J_2, k}\, \abs{\widehat \alpha_{0,0}(\theta)},\,\abs{\widehat \beta_{j,k}(\theta)},\\
    \widetilde T(\theta) &\coloneqq \tmax_{J_2 \le j < J, k}
    \,\abs{\widehat \beta_{j,k}(\theta)}.
  \end{align*}
  Under \(H_0,\) using
  \hyperref[lem:var]{Lemmas}~\ref{lem:var}\eqref{it:nv}
  and~\ref{lem:bia}\eqref{it:nb}, we can then write
  \begin{align*}
    n^{1/2}\overline T(\widehat \theta) &= \tmax_{0\le j< J_2,k}\,
    \abs{\widetilde \xi_{j,k}} + O_p(1),\\
    n^{1/2}\widetilde T(\widehat \theta) &= \tmax_{J_2 \le j < J,k}\,
    \abs{\widetilde \xi_{j,k}} + O_p(n^{-\varepsilon'}),
  \end{align*}
  for some \(\varepsilon' > 0,\) and independent standard Gaussians
  \(\widetilde \xi_{j,k}.\)

  By standard Gumbel limits, we also have
  \begin{align*}
    a^{-1}_{2^{J_2}}(\tmax_{0\le j<J_2,k}\,\abs{\widetilde \xi_{j,k}} -
    b_{2^{J_2}}) &\overset{d}\to G,\\
    a^{-1}_{2^J}(\tmax_{J_2 \le j<J,k}\,\abs{\widetilde \xi_{j,k}} -
    b_{2^J}) &\overset{d}\to G;
  \end{align*}
  we note that in the second limit, we may use the constants
  \(a_{2^J}\) and \(b_{2^J},\) rather than \(a_{2^J-2^{J_2}}\) and
  \(b_{2^J-2^{J_2}},\) as the difference is negligible.  We deduce
  that
  \[\P[\widehat T(\widehat \theta) = \widetilde T(\widehat \theta)] \to 1,\]
  and so
  \begin{align*}
    a^{-1}_{2^J}(n^{1/2}\widehat T(\widehat \theta) - b_{2^J})
    &= a^{-1}_{2^J}(n^{1/2}\widetilde T(\widehat \theta) - b_{2^J}) + o_p(1)\\
    &= a^{-1}_{2^J}(\tmax_{J_2 \le j < J,k}
      \abs{\widetilde \xi_{j,k}} - b_{2^J}) + o_p(1)\\
    &\overset{d}\to G.
  \end{align*}

  Next, we prove part~\eqref{it:h1}. As before, since \(\widehat
  \theta = O_p(1),\) we may assume \(\widehat \theta = O(1),\) and
  hence that \(\Theta\) is bounded. Using
  \hyperref[lem:var]{Lemmas}~\ref{lem:var}\eqref{it:av}
  and~\ref{lem:bia}\eqref{it:ab}, we then have
  \begin{align*}
    \widehat T(\widehat \theta) - T(\widehat \theta) &=
    O(1)\tmax_{0\le j<J,k} \abs{\widehat \alpha_{0,0}(\widehat \theta) -
      \alpha_{0,0}(\widehat \theta)}, \abs{\widehat \beta_{j,k}(\widehat \theta) -
      \beta_{j,k}(\widehat \theta)}\\
    &= O_p(n^{-1/2}\log(n)^{1/2}).
  \end{align*}

  Finally, we note that the rates of convergence proved depend only
  upon the bounds assumed on the inputs. They therefore hold uniformly
  over models satisfying our assumptions.
\end{proof}

Next, we will prove our results on test coverage and detection rates.

\begin{proof}[Proof of \autoref{thm:ub}]
  We first note that part~\eqref{it:u0} is immediate from
  \autoref{thm:lim}\eqref{it:h0}. To prove part~\eqref{it:u1}, we
  consider separately the cases~\eqref{it:ua} and~\eqref{it:ub}.  In
  each case, we will prove that with probability tending to one, the
  event \(E_n\) implies
  \[T(\widehat \theta) \ge M_n'
  n^{-1/2}\log(n)^{1/2},\] for a fixed sequence \(M_n' \to \infty.\) The
  result will then follow from \autoref{thm:lim}\eqref{it:h1}.

  In case~\eqref{it:ua}, we note that arguing as in
  \autoref{thm:lim}, we may assume \(\Theta\) is bounded. Let
  \(\underline \Theta_n\) be an \(n^{-1/4}\)-net for \(\Theta,\) of size
  \(O(n^{p/4}),\) and \(\underline{\widehat \theta}\) be an element of
  \(\underline \Theta_n\) satisfying
  \[\underline{\widehat \theta} - \widehat \theta = O(n^{-1/4}).\]
  Using \autoref{lem:obs}\eqref{it:sl}, we have
  \[S_t(\widehat \theta) = S_t(\underline{\widehat \theta}) +
  O(n^{-1/4}),\]
  so on \(E_n,\)
  \[\norm{S(\underline{\widehat \theta})}_\infty \ge
  \norm{S(\widehat \theta)}_\infty - O(n^{-1/4}) \ge
  M_nn^{-1/4}\log(n)^{1/2}/2,\]
  for large \(n.\) We may thus assume further that \(\widehat \theta
  \in \underline \Theta_n.\)

  We then apply \autoref{lem:smt}, obtaining processes
  \(\widetilde S_t(\theta),\,\overline S_t(\theta),\) and times \(\tau_i.\) On the event
  \(E_n,\) for some point \(u \in [0,1],\) we have
  \[\abs{S_{u}(\widehat \theta)} \ge M_n n^{-1/4}\log(n)^{1/2}.\]
  We thus have \(u \in [\tau_i, \tau_{i+1})\) for some \(i < N-1,\) or
  \(u \in [\tau_{i}, \tau_{i+1}]\) for \(i = N-1.\) From
  \autoref{lem:smt}\eqref{it:lj}, with probability tending to one we
  also have \[\tau_{i+1}-\tau_i \ge 2^{1-J},\]
  and so there exists a point \(v \in [\tau_i+2^{-J},
  \tau_{i+1}-2^{-J}],\) \(\abs{u - v} \le 2^{-J}.\)

  We deduce that with probability tending to one,
\begin{align*}
  \mel \abs{\alpha_{0,0}(\widehat \theta)} + \tsum_{j=0}^{J-1}
  2^{j/2}\abs{\beta_{j,2^{-j}\lfloor 2^j v \rfloor}(\widehat
  \theta)}\\
&=\abs{\alpha_{0,0}(\widehat \theta)\varphi_{0,0}(v)}
  + \tsum_{0\le j<J,k} \abs{\beta_{j,k}(\widehat
  \theta)\psi_{j,k}(v)}\\
&\ge \abs{\alpha_{0,0}(\widehat \theta)\varphi_{0,0}(v)
  + \tsum_{0\le j<J,k} \beta_{j,k}(\widehat
  \theta)\psi_{j,k}(v)}\\
&=\abs{P_JS_{v}(\widehat \theta)},\\
  \intertext{writing the projection \(P_J\) in terms of the wavelet functions \(\psi_{j,k},\)}
&\ge \abs{S_{u}(\widehat
  \theta)} - \abs{S_{v}(\widehat \theta) - S_{u}(\widehat
  \theta)} - \abs{R_JS_{v}(\widehat \theta)}\\
&\ge \abs{S_{u}(\widehat \theta)} - \abs{\widetilde
  S_{v}(\widehat \theta) - \widetilde S_{u}(\widehat \theta)}
  - \abs{R_J\widetilde S_{v}(\widehat \theta)},\\
  \intertext{since \(\overline S_t(\widehat \theta)\) is constant within a distance \(2^{-J}\) of \(v,\)}
&\ge M_n n^{-1/4}\log(n)^{1/2}/2,
  \end{align*}
  using \hyperref[lem:smt]{Lemmas}~\ref{lem:smt}\eqref{it:hs}
  and~\eqref{it:sj}. We deduce that
  \begin{align*}
    T(\widehat \theta) &\ge \max(\abs{\alpha_{0,0}(\widehat \theta)}, \abs{\beta_{j,2^{-j}\lfloor 2^j v \rfloor}(\widehat \theta)} : j = 0, \dots, J-1),\\
                       &\ge 2^{-(J+3)/2} (\abs{\alpha_{0,0}(\widehat \theta)} + \tsum_{j=0}^{J-1}2^{j/2} \abs{\beta_{j,2^{-j}\lfloor 2^j v \rfloor}(\widehat \theta)})\\
                       &\ge M_n' n^{-1/2}\log(n)^{1/2},
  \end{align*}
  for a sequence \(M_n' \to \infty.\)

  In case~\eqref{it:ub}, on the event \(E_n,\) we likewise have
  \begin{align*}
    \mel \abs{\alpha_{0,0}(\widehat \theta)} + \tsum_{j=0}^{j_n-1}
    2^{j/2}\abs{\beta_{j,2^{-j}\lfloor 2^{j-j_n}k_n \rfloor}(\widehat \theta)}
    \\ &\ge\abs{P_{j_n}S_{2^{-j_n}k_n}(\widehat \theta)}\\
    &= 2^{j_n}\abs{\tint_{2^{-j_n}k_n}^{2^{-j_n}(k_n+1)} S_t(\widehat
      \theta)\,dt}\\
    &\ge M_n2^{j_n/2}n^{-1/2}\log(n)^{1/2},
  \end{align*}
  for some \(j_n = 0, \dots, J\) and \(k_n.\) The result then follows
  as in part~\eqref{it:u0}.
\end{proof}

Finally, we can prove our lower bound on detection rates.

\begin{proof}[Proof of \autoref{thm:lb}]
  In each case~\eqref{it:la} and~\eqref{it:lb}, we will reduce the
  statement to a known testing inequality. We will consider the model
  \[Y_i \coloneqq \delta_n^{1/2}n^{-1/2}2^{j_n}(B_{t_{i} \vee \tau} -
  B_\tau) + \varepsilon_i,\]
  where \(B_t\) is an adapted Brownian motion, the independent
  \(\mathcal F_{t_{i+1}}\)-measurable variables \(\varepsilon_i\) are
  standard Gaussian given \(\mathcal F_{t_i},\) \(\tau \in [0,1]\) is
  to be defined, and in case~\eqref{it:la} we set \(j_n \coloneqq J.\)
  It can be checked that this model satisfies our assumptions.

  Under \(H_0,\) we set \(\tau \coloneqq 1,\) so we have mean and variance
  functions \[\mu \coloneqq 0, \qquad \sigma^2 \coloneqq 1.\] Under \(H_1,\) we
  instead set \(\tau \coloneqq t_{m},\) where \(m \coloneqq \lfloor n(1 -
  2^{-j_n}) \rfloor.\) We then have
  \[S_t = \delta_n^{1/2}n^{-1/2}2^{j_n} (B_{t \vee
    \tau} - B_\tau),\] so in case~\eqref{it:la},
  \[\P[E_n] = \P[\norm{S}_\infty \ge \delta_n
  n^{-1/4}] \to 1.\]
  Similarly, in case~\eqref{it:lb},
  \[\P[E_n] \ge \P[2^{j_n/2}\abs{\tint_{1-2^{-j_n}}^1 S_t\,dt}
  \ge \delta_n n^{-1/2}] \to 1.\]

  It remains to show that no sequence of critical regions \(C_n\)
  can satisfy \(\lim \tsup_n \P[C_n] < 1\) under \(H_0,\) and
  \(\P[C_n] \to 1\) under \(H_1.\) We note that under
  \(H_0,\) we have \(Y \sim N(0, I),\) while under \(H_1,\) \(Y \sim
  N(0, I + \delta_n\Sigma),\) for a covariance matrix
  \[\Sigma_{k,l} = 0 \vee 2^{2j_n}(k \wedge l - m)/n^2.\]
  As \(\Sigma\) is non-negative definite, and has Frobenius norm
  \(O(1),\) the result follows from Lemma~2.1 of \citet{munk_lower_2010}.
\end{proof}


\subsection{Technical proofs}
\label{sec:ptc}

We now give proofs of our technical results, beginning with a
demonstration that our examples satisfy our assumptions.

\begin{lemma}
\label{lem:exa}
The examples in \autoref{sec:det} satisfy the conditions of
\autoref{ass:mod}.
\end{lemma}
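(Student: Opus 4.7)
The plan is to verify each example in \autoref{sec:det} satisfies \autoref{ass:mod} in turn. In all cases, the underlying semimartingale $X_t$ (and, where present, $X_{2,t}$) has locally-bounded characteristics, so by a standard localisation argument we may assume without loss of generality that these characteristics, along with $b_t, b_t'$, $c^\mu_s$, $c^X_{i,s}$, and the truncation bound on $f_s$, are uniformly bounded. Under this reduction, the conditions~\eqref{eq:wsm} on $\mu_t$ and $X_t$ follow immediately from It\=o's formula and the Burkholder--Davis--Gundy inequality applied to their semimartingale representations, noting that the finite-variation jump part has bounded $\beta$-index and therefore bounded first-moment contributions.

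The covariate-estimate bounds~\eqref{eq:xha} are trivial in the local volatility and jumps examples, where $\widehat X_i = X_{t_i}$. For microstructure noise, the pre-averaged estimate $\widehat X_{1,j}$ averages $n$ noisy observations of $X_{1,t}$ over a block of length $1/n$; the conditions $\E[\varepsilon_i\mid\F_{t_i'}]=0$, $\E[\varepsilon_i^2\mid\F_{t_i'}] = X_{2,t_i'} = O(1)$, and the semimartingale increments of $X_{1,t}$ give the required bounds on $\E[\|\widehat X_{1,j}-X_{1,t_j}\|^2]$ and fourth moments. The second coordinate $\widehat X_{2,j}$ is handled similarly. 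The stochastic volatility case proceeds analogously, with $\widehat X_{1,j} = X_{1,t_j}$ exact and $\widehat X_{2,j}$ a pre-average of estimated local variances.

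The main work is verifying~\eqref{eq:ymb}. In the local volatility case, a direct It\=o expansion of $n(X_{t_{i+1}}-X_{t_i})^2$ gives $\E[Y_i\mid\F_{t_i}] = \mu_{t_i}+O(n^{-1/2})$ and $\Var[Y_i\mid\F_{t_i}] = 2\mu_{t_i}^2 + O(n^{-1/2})$, with the $(4+\varepsilon)$-th moment bound following from Gaussian tails of the scaled diffusion increment and boundedness of $\mu_t$. For the truncated realised volatility, we must show the indicator $1_{x^2<\alpha_n}$ neither removes significant mass from the diffusion nor admits significant jump contributions: condition~\eqref{eq:an} that $\log n = o(\alpha_n)$ makes the probability of truncating a pure-diffusion increment decay faster than any polynomial, so removing this mass only perturbs the mean by $O(n^{-1/2})$; while $\alpha_n = o(n^\kappa)$ for all $\kappa > 0$, combined with the $\beta$-index bound on $f_t$, ensures that the retained jump contribution is likewise $O(n^{-1/2})$ in mean. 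For microstructure noise and stochastic volatility, $Y_j$ is a cosine-weighted spectral combination of pre-averaged increments; standard calculations, as in \citet{jacod_microstructure_2009} and \citet{vetter_estimation_2012}, yield the claimed mean and variance up to the required order, with the variance function $\sigma^2$ absorbing both the diffusion and the (co)variance contributions from $X_{2,t}$.

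The main obstacle is the $(4+\varepsilon)$-th moment bound in the microstructure noise and stochastic volatility cases, where $Y_j$ is a cosine-weighted combination of $n$ martingale-difference-like summands after cancellation of the dominant variance term $\widehat X_{2,j}$ (or $\widehat X_{2,j}^2$). Here we invoke \autoref{lem:was}\eqref{it:em}, which provides exponential moment bounds (and hence $L^{4+\varepsilon}$ bounds) for weighted sums of bounded-moment martingale differences, applied after localising and decomposing the cosine-weighted sum into its martingale part and a negligible remainder. Combining these ingredients across the four models, every condition of \autoref{ass:mod} is satisfied, which completes the verification.
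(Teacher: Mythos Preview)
Your proposal is correct and follows essentially the same approach as the paper: localisation to reduce to bounded characteristics, trivial verification of~\eqref{eq:xha} in the first two examples and pre-averaging bounds in the latter two, It\=o-type decompositions for~\eqref{eq:ymb}, and appeal to \autoref{lem:was} for the higher-moment bounds in the microstructure noise and stochastic volatility cases. The paper differs only in being more explicit about the martingale-difference decompositions (writing $Y_i$ as a square of a sum of labelled terms $Z_{1,i},Z_{2,i},\dots$ in each case, and in the jumps example splitting the jump integral at a threshold $n^{-1/2+\delta'}$ rather than reasoning directly from the truncation level $\alpha_n$), but the strategy is the same.
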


\begin{proof}
  As in \autoref{sec:det}, we may assume our conditions on \(\mu,\)
  \(\sigma\) and \(\widehat \theta\) are satisfied. It thus remains to
  establish the conditions \eqref{eq:wsm}--\eqref{eq:ymb} for each of
  the examples.
\begin{description}
\item[Local volatility]

  By standard localisation arguments, we may assume condition
  \eqref{eq:wsm}, as well as that \(b_t,\,\mu_t^{-1} = O(1).\)
  Condition \eqref{eq:xha} is trivial. To establish condition \eqref{eq:ymb}, we make the
  decomposition
\[Y_i = (Z_{1,i} + Z_{2,i} + Z_{3,i})^2,\]
where
\begin{align*}¯
Z_{1,i} &\coloneqq \sqrt{n} \tint_{t_i}^{t_{i+1}} \sqrt{\mu_{t_i}} \, dB_t,\\
Z_{2,i} &\coloneqq \sqrt{n} \tint_{t_i}^{t_{i+1}} (\sqrt{\mu_t} - \sqrt{\mu_{t_i}}) \, dB_t,\\
Z_{3,i} &\coloneqq \sqrt{n} \tint_{t_i}^{t_{i+1}} b_t\,dt.
\end{align*}

We then have that, for \(p > 0,\)
\begin{align*}
Z_{1,i} \mid \F_{t_i} &\sim N(0, \mu_{t_i}), \\
\E[\abs{Z_{2,i}}^p \mid \F_{t_i}] &= O(n^{-p/2}),\\
\intertext{using Burkholder-Davis-Gundy, and}
\abs{Z_{3,i}} &= O(n^{-1/2}).
\end{align*}
The desired bounds follow using Cauchy-Schwarz.

\item[Jumps]

  By localisation, we may again assume \eqref{eq:wsm}, as well as the
  bounds
  \(b_t,\, \mu_t^{-1},\, \tint_\R 1 \wedge \abs{f_t(x)}^\beta \,dx =
  O(1).\) Condition \eqref{eq:xha} is again trivial. For
  \eqref{eq:ymb}, we write
\[Y_i = g_n(Z_{1,i} + Z_{2,i} + Z_{3,i} + Z_{4,i} + Z_{5,i}),\]
where
\begin{align*}
Z_{1,i} &\coloneqq \sqrt{n} \tint_{t_i}^{t_{i+1}} \sqrt{\mu_{t_i}}\, dB_t,\\
Z_{2,i} &\coloneqq \sqrt{n} \tint_{t_i}^{t_{i+1}} \tint_{A_t} f_t(x)\,\lambda(dx,dt),\\
Z_{3,i} &\coloneqq \sqrt{n} \tint_{t_i}^{t_{i+1}} \tint_{A_t^c} f_t(x)\,\lambda(dx,dt),\\
Z_{4,i} &\coloneqq \sqrt{n} \tint_{t_i}^{t_{i+1}} (\sqrt{\mu_t} - \sqrt{\mu_{t_i}})\,dB_t,\\
Z_{5,i} &\coloneqq \sqrt{n} \tint_{t_i}^{t_{i+1}} b_t\,dt,
\end{align*}
and \[A_t \coloneqq \{x \in \R : \abs{f_t(x)} \le n^{-1/2+\delta'}\},\] for
some sufficiently small \(\delta' > 0.\)

We then have that, for \(p = 2,4,16,\) and some \(\epsilon' > 0,\)
\begin{align*}
Z_{1,i} \mid \F_{t_i} &\sim N(0, \mu_{t_i}),\\
\E[\abs{Z_{2,i}}^p \mid \F_{t_i}] &= O(n^{-1/2 - \epsilon'}),\\
\intertext{by repeated application of Burkholder-Davis-Gundy,}
\E[Z_{1,i}Z_{2,i}\mid \F_{t_i}] &= 0,\\
\intertext{by It\=o's Lemma,}
\E[\abs{Z_{4,i}}^p \mid \F_{t_i}] &= O(n^{-p/2}),\\
\intertext{by Burkholder-Davis-Gundy, and}
\abs{Z_{5,i}} &= O(n^{-1/2}).
\end{align*}

Letting \(Z_i \coloneqq Z_{1,i} + Z_{2,i} + Z_{4,i} + Z_{5,i},\) and
\(Y_i' \coloneqq Z_i^2,\) we deduce that
\begin{align*}
  \E[Y_i' \mid \F_{t_i}] &= \mu_{t_i} + O(n^{-1/2}),\\
  \E[\abs{Y_i'}^2 \mid \F_{t_i}] &= \sigma_{t_i}^2 + \mu_{t_i}^2 + O(n^{-1/4}),\\
  \E[\abs{Y_i'}^8 \mid \F_{t_i}] &= O(1).
\end{align*}
It thus remains to control the difference between \(Y_i\) and
\(Y_i'.\)

We first note that for any \(p \ge 2,\) small enough \(q > 1,\) and
\(\alpha_n\) as in \eqref{eq:an},
\begin{align*}
  \E[\abs{Z_i}^p 1_{Z_i^2 > \alpha_n}]
  &= O(n^{-1/2}) + O(1)\E[\abs{Z_{1,i}}^p1_{Z_i^2 > \alpha_n}]\\
  &= O(n^{-1/2}) + O(1)\P[Z_i^2 > \alpha_n]^{1/q},\\
  \intertext{using H\"older's inequality,}
  &= O(n^{-1/2}),
\end{align*}
using a Gaussian tail bound, Markov's inequality, and that
\(\alpha_n\) grows super-logarithmically. We also have
\[\P[Z_{3,i} \ne 0\mid \F_{t_i}] = O(n^{-1/2-\delta'}),\]
using our assumptions on \(f_t(x).\) As \(\alpha_n\) grows
sub-polynomially, the required bounds follow also for \(Y_i.\)

\item[Microstructure noise]

  As before, by localisation we may assume condition \eqref{eq:wsm},
  as well as the boundedness of characteristic processes. To show \eqref{eq:xha},
  we first note that
\[\widehat X_{1,j} - X_{1,t_j} = W_{1,j} + W_{2,j},\]
where
\begin{align*}
W_{1,j} &\coloneqq n^{-1}\tsum_{i=0}^{n-1} \varepsilon_{i,nj+i},\\
W_{2,j} &\coloneqq n^{-1}\tsum_{i=0}^{n-1} (X_{1,t'_{nj+i}}-X_{1,t_j}).
\end{align*}

For \(p = 2, 4,\) we then have
\begin{align*}
\E[\abs{W_{1,j}}^p\mid\F_{t_j}] &= O(n^{-p/2}),\\
\intertext{using \autoref{lem:was}\eqref{it:em}, and}
\E[\abs{W_{2,j}}^p \mid \F_{t_j}] &= O(n^{-p/2}),
\end{align*}
using Burkholder-Davis-Gundy. We deduce that
\[\E[\abs{\widehat X_{1,j} - X_{1,t_j}}^p\mid\F_{t_j}] = O(n^{-p/2}).\]
By a similar method, decomposing \(\widehat X_{2,j}\) into a sum of
nuisance terms and martingale difference sequences, the same bound
holds also for \(\widehat X_{2,j}.\)

To show \eqref{eq:ymb}, we write
\[Y_j = 2(Z_{1,j} + Z_{2,j} + Z_{3,j})^2 - \pi^2 \widehat X_{2,j},\]
using integration by parts, where
\begin{align*}
Z_{1,j} &\coloneqq \bal \sqrt{n} \tint_{t_j}^{t_{j+1}} \sin(\pi n (t - t_j)) \sqrt{\mu_t} \, dB_t\\
&+ \pi n^{-1/2} \tsum_{i=0}^{n-1} \cos(\pi(i+\tfrac12)/n)\varepsilon_{nj+i},\eal\\
Z_{2,j} &\coloneqq \sqrt{n} \tint_{t_j}^{t_{j+1}} \sin(\pi n (t - t_j)) b_t\, dt,\\
Z_{3,j} &\coloneqq \pi (\bal n^{-1/2} \tsum_{i=0}^{n-1} \cos(\pi(i+\tfrac12)/n)X_{1,t'_{nj+i}}\\
&- n^{3/2} \tint_{t_j}^{t_{j+1}} \cos(\pi n(t-t_j)) X_{1,t}\,dt).\eal
\end{align*}

We then have
\begin{align*}
\E[2Z_{1,j}^2 \mid \F_{t_j}] &= \mu_{t_j} + \pi^2 X_{2,t_j} + O(n^{-1/2}),\\
\intertext{by direct computation,}
\E[4Z_{1,j}^4 \mid F_{t_j}] &= 3(\mu_{t_j} + \pi^2 X_{2,t_j})^2 + O(n^{-1/4}),\\
\intertext{using \autoref{lem:was}\eqref{it:gd},}
\E[\abs{Z_{1,j}}^\kappa \mid \F_{t_j}] &= O(1),\\
\intertext{using \autoref{lem:was}\eqref{it:em},}
\abs{Z_{2,j}} &= O(n^{-1/2}),\\
\intertext{and for \(p > 0,\)}
\E[\abs{Z_{3,j}}^p \mid \F_{t_j}] &= O(n^{-p}),
\end{align*}
using \autoref{lem:was}\eqref{it:em}. The desired results follow.

\item[Stochastic volatility]

  Using integration by parts, we can make a decomposition
\[Y_j = 2(Z_{1,j} + Z_{2,j} + Z_{3,j} + Z_{4,j})^2 - 2\pi^2 \widehat X_{2,j}^2,\]
where
\begin{align*}
  Z_{1,j} &\coloneqq \bal \sqrt{n}\tint_{t_j}^{t_{j+1}} \sin (\pi n(t-t_j)) \sqrt{\mu_t}\,dB_t'\\
          &+ \bal \pi n^{-1/2} \tsum_{i=0}^{n-1} \cos(\pi(i+\tfrac12)/n) \times\\
            &(\widetilde X_{2,nj+i} - \E[\widetilde X_{2,nj+i} \mid \F_{t'_{nj+i}}]),\eal\eal\\
  Z_{2,j} &\coloneqq \sqrt{n}\tint_{t_j}^{t_{j+1}} \sin (\pi n(t-t_j)) b'_t\,dt\\
  Z_{3,j} &\coloneqq \pi (\bal n^{-1/2} \tsum_{i=0}^{n-1} \cos(\pi(i + \tfrac12)/n) X_{2,t'_{nj+i}}\\
&- n^{3/2} \tint_{t_j}^{t_{j+1}} \cos(\pi n(t - t_j)) X_{2,t}\,dt),\eal\\
Z_{4,j} &\coloneqq \bal\pi n^{-1/2} \tsum_{i=0}^{n-1} \cos(\pi(i+\tfrac12)/n) \times \\
&(\E[\widetilde X_{2,nj+i} \mid \F_{t'_{nj+i}}] - X_{2,t'_{nj+i}}).\eal
\end{align*}
The desired results then follow similarly to the microstructure noise
example.\qedhere
\end{description}
\end{proof}

Next, we give some standard exponential moment bounds on stochastic
integrals.

\begin{lemma}
  \label{lem:exp}
  Let \((\Omega, \mathcal F, (\mathcal F_t)_{t \in [0, \infty]}, \P)\)
  be a filtered probability space, with adapted Brownian motion \(B_t
  \in \R,\) and Poisson random measure \(\lambda(dx,dt)\) having
  compensator \(dx\,dt.\)
  \begin{enumerate}
  \item \label{it:ce} Let \(c_t \in \R\) be a predictable process, and
    define
    \[\rho^2_t \coloneqq \tint_0^t c_s^2\,ds.\]
    If \(\rho^2_\infty < \infty\) almost surely, then for \(u \in
    \R,\) the stochastic integral
    \[W_t \coloneqq \tint_0^t c_s\,dB_s\] satisfies
    \[\E[\exp(uW_\infty - \tfrac12 u^2 \rho_\infty^2) \mid \mathcal F_0] \le
    1.\]
  \item \label{it:se} In the setting of part~\eqref{it:ce}, if
    \(\rho^2_\infty \le R\) almost surely, we further have
    \[\E[\exp(u\tsup_{t \ge 0} \abs{W_t} - u^2R)\mid \mathcal F_0] = O(1).\]
  \item \label{it:de}
    Let \(t \ge 0,\) and \(f_s(x) \in \R\) be a predictable function,
    with \(\abs{f_s(x)} \le 1,\)
    \[\tint_0^t \tint_\R \abs{f_s(x)}\,dx\,ds = O(R),\]
    for some \(R \in (0, 1).\) Then for \(u \le
    -\log(R),\) the stochastic integral
    \[W_t \coloneqq \tint_0^t \tint_\R f_s(x) \,(\lambda(dx,ds) -
    dx\,ds)\]
    satisfies
    \[\E[\exp(u\tsup_{s \in [0,t]}\abs{W_s}) \mid \mathcal F_0] = O(1).\]
  \end{enumerate}
\end{lemma}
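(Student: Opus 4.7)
The plan is to establish all three parts via the exponential (super)martingale method, deriving supremum bounds with Doob's maximal inequality.

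For part~\eqref{it:ce}, I would introduce the Dol\'eans-Dade exponential $Z_t \coloneqq \exp(uW_t - \tfrac12 u^2 \rho_t^2),$ and verify by It\^o's formula that $dZ_t = u c_t Z_t\,dB_t,$ so $Z_t$ is a nonnegative local martingale and hence a supermartingale with $Z_0 = 1.$ Since $\rho_\infty^2 < \infty$ almost surely, $Z_\infty$ is a well-defined nonnegative random variable, and the supermartingale property yields $\E[Z_\infty \mid \F_0] \le 1.$

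For part~\eqref{it:se}, I would combine part~\eqref{it:ce}, the uniform bound $\rho_t^2 \le R,$ and Doob's maximal inequality applied to the nonnegative supermartingale associated to an auxiliary parameter $v > 0$ to deduce
\[\P(\tsup_{t \ge 0} W_t \ge a \mid \F_0) \le \exp(-va + \tfrac12 v^2 R),\]
and optimising in $v = a/R$ produces the Gaussian tail $\exp(-a^2/(2R)).$ The same argument applied to $-W_t$ yields a matching bound, so $\tsup_t |W_t|$ is sub-Gaussian with parameter $R.$ The desired exponential moment then follows from $\E[\exp(u\tsup|W_t|)] = 1 + u\tint_0^\infty e^{ua}\,\P(\tsup|W_t| > a)\,da,$ by completing the square in the resulting Gaussian integral; the $e^{-u^2R}$ prefactor cancels the leading Gaussian in $u,$ leaving an expression uniformly $O(1).$

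For part~\eqref{it:de}, I would use the Poisson exponential martingale
\[Z_s \coloneqq \exp(uW_s - \tint_0^s \tint_\R (e^{uf_r(x)} - 1 - uf_r(x))\,dx\,dr),\]
which is again a nonnegative local, hence super, martingale. The crucial estimate is to control the exponential compensator: expanding $e^{uf} - 1 - uf = \tsum_{k\ge 2}(uf)^k/k!$ and using $|f| \le 1$ to absorb the high powers of $|f|$ into $|f|$ itself gives $|e^{uf} - 1 - uf| \le |f|(e^{|u|} - 1 - |u|).$ Combining with $\tint\tint |f_s(x)|\,dx\,ds = O(R)$ and the hypothesis $u \le -\log R$ (so that $e^{|u|} R = O(1)$) makes the compensator uniformly $O(1).$ Doob's maximal inequality then yields an exponential tail bound on $\tsup_s W_s,$ and by symmetry on $\tsup_s(-W_s),$ which I would integrate as in part~\eqref{it:se} to extract the claimed moment bound.

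The main obstacle is the final integration in part~\eqref{it:de}: unlike the Brownian setting, the tail bound from the Poisson supermartingale is only sub-exponential, with decay rate exactly matching the growth $e^{ua}$ in the integrand, so the naive integral is borderline divergent. I would handle this by running the supermartingale argument with a slightly enlarged parameter $u' \in (u, -\log R],$ upgrading the decay to $e^{-u'a}$ and rendering $\tint e^{(u - u')a}\,da$ convergent with a universal bound, the enlargement being available whenever the hypothesis has any slack and extending to the boundary by continuity.
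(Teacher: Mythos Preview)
Your parts~\eqref{it:ce} and~\eqref{it:se} are correct. Part~\eqref{it:ce} is exactly the paper's argument. For part~\eqref{it:se} the paper takes a slightly different, more direct route: instead of extracting sub-Gaussian tails and integrating, it bounds $\E[\mathcal E(uW)_\infty^2\mid\F_0]\le e^{u^2R}$, deduces that $\mathcal E(\pm uW)$ are true martingales, and applies Doob's $L^2$ maximal inequality to $\E[\sup_t\mathcal E(\pm uW)_t\mid\F_0]$. Both approaches are valid; the paper's avoids the tail integration.

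Part~\eqref{it:de} has a genuine gap. Your exponential-supermartingale argument correctly yields $\P(\sup_s W_s\ge a\mid\F_0)=O(e^{-ua})$, but, as you already observe, integrating this against $e^{ua}$ diverges. Your fix of running the bound at a larger $u'\in(u,-\log R]$ gives only $\E[e^{u\sup W}\mid\F_0]=O(1/(u'-u))$, which blows up as $u\uparrow-\log R$; an appeal to continuity cannot produce a \emph{uniform} $O(1)$ at the endpoint, which is what the lemma asserts. The paper sidesteps the maximal inequality entirely. It introduces the variation martingale
\[M_t\coloneqq\tint_0^t\tint_\R|f_s(x)|\,(\lambda(dx,ds)-dx\,ds),\]
notes the pathwise bound $\sup_{s\le t}|W_s|\le M_t+O(R)$, and then bounds the single terminal moment $\E[e^{uM_t}\mid\F_0]$. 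The key device is the secant inequality $e^x\le 1+cx$ on $[0,-\log R]$ with $c=(R^{-1}-1)/(-\log R)$, applied to each jump $u\Delta M_s\in[0,-\log R]$; this gives $e^{uM_t}\le O(1)\,\mathcal E(cuM)_t$, and since $\mathcal E(cuM)$ is a nonnegative supermartingale one obtains $\E[e^{uM_t}\mid\F_0]=O(1)$ uniformly in $u\le -\log R$, including the boundary.
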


\begin{proof}
  We begin with part~\eqref{it:ce}.  By localisation and martingale
  convergence, we have
  \[W_t \overset{\mathrm{a.s.}}\to W_\infty.\] The Dol\'eans-Dade
  exponential
  \[\mathcal E(uW)_t \coloneqq \exp(uW_t - \tfrac12 u^2 \rho_t^2)\]
  is a non-negative local martingale, so a supermartingale. Hence by
  Fatou's lemma,
  \[\E[\mathcal E(uW)_\infty\mid \mathcal F_0] \le \textstyle \lim\inf_{t\to\infty}
  \E[\mathcal E(uW)_t \mid \mathcal F_0] \le \E[\mathcal E(uW)_0 \mid
  \mathcal F_0] = 1.\]

  To show part~\eqref{it:se}, we have
  \[\E[\mathcal E(uW)_\infty^2 \mid \mathcal F_0] \le \exp(u^2R)\E[\mathcal
  E(2uW)_\infty \mid \mathcal F_0] \le \exp(u^2R),\]
  so \(\mathcal E(uW)_t\) is a true martingale. We deduce that
  \begin{align*}
    \mel\E[\exp(u \tsup_{t \ge 0} \abs{W_t} - u^2R) \mid \mathcal F_0]\\
    &\le \exp(-\tfrac12u^2R)\E[\tsup_{t \ge 0} \mathcal E(uW)_t + \tsup_{t \ge 0} \mathcal E(-uW)_t \mid \mathcal F_0]\\
    &= O(1)\exp(-\tfrac12u^2R)\E[\mathcal E(uW)_\infty^2 + \mathcal E(-uW)_\infty^2 \mid \mathcal F_0]^{1/2},\\
    \intertext{using Doob's martingale inequality,}
    &= O(1).
  \end{align*}

  We now prove part~\eqref{it:de}, noting we may assume \(u \ge 0.\)
  Defining the variation martingale
  \[M_t \coloneqq \tint_0^t \tint_\R \abs{f_s(x)}\,(\lambda(dx, ds) - dx\,ds),\]
  we then have
  \[\tsup_{s \in [0,t]} \abs{W_s} \le M_t + O(R),\]
  so it suffices to bound \(M_t.\) Let \(M_t^c\) denote the continuous
  part of \(M_t,\) and \(\Delta M_t\) its jump at time \(t.\) Then for
  \(v \ge 0,\) the Dol\'eans-Dade exponential
  \[\mathcal E(vM)_t \coloneqq \exp(vM_t^c)\tprod_{s\le t}(1 + v\Delta
  M_s)\] is a non-negative local martingale, so a supermartingale.

  Furthermore, since \(u \Delta M_s \in [0, -\log(R)],\) we have
  \[\exp(u\Delta M_s) \le 1 + cu\Delta M_s,\]
  where the constant \(c \coloneqq (1-R^{-1})/\log(R).\) We deduce
  that
  \begin{align*}
    \E[\exp(uM_t) \mid \mathcal F_0] &= O(1)\E[\exp((c-1)uM_t^c
    +
    uM_t) \mid \mathcal F_0]\\
    &= O(1)\E[\exp(cuM_t^c + u\tsum_{s\le t} \Delta M_s) \mid
    \mathcal F_0]\\
    &= O(1)\E[\mathcal E(cuM)_t \mid \mathcal F_0]\\
    &= O(1)\mathcal E(cuM)_0\\
    &= O(1).\qedhere
  \end{align*}
\end{proof}

We may now prove our central limit theorem for martingale
differences. Our argument uses a Skorokhod embedding, as in
\citet{mykland_embedding_1995} or \citet{obloj_skorokhod_2004}, for
example.

\begin{proof}[Proof of \autoref{lem:was}]
  We begin with a Skorokhod embedding, allowing us to consider the
  variables \(X_i\) as stopped Brownian motions on an extended
  probability space. Our argument proceeds by induction on a variable
  \(k = 0, \dots, n.\)
  
  We claim that for \(i = 0, \dots, k-1,\) on an extended probability
  space, we can construct processes \((B_{i,t})_{t \in [0,\infty)},\)
  which are Brownian motions given the \(\sigma\)-algebra \(\widetilde
  {\mathcal F}_{i}\) generated by \(\mathcal F_{i}\) and
  \(B_0,\dots,B_{i-1},\) and are independent of \(\mathcal F\) given
  \(\mathcal F_{i+1}.\) We further claim we can construct variables
  \(\tau_i \in [0, \infty),\) which are stopping times in the natural
  filtrations \(\mathcal G_{i,t}\) of the \(B_{i,t},\) so that \(X_i =
  B_{i,\tau_i}.\)

  For \(k = 0,\) the claim is trivial; we will show that if the claim
  holds for \(k,\) it holds also for \(k+1.\) By Skorokhod embedding,
  on a further-extended probability space, we can construct a process
  \(\widetilde B_{k,t}\) which is a Brownian motion given \(\widetilde
  {\mathcal F}_{k},\) and a variable \(\widetilde \tau_k\) which is
  a stopping time in the natural filtration of \(\widetilde B_{k,t},\)
  such that the variable \(\widetilde X_k \coloneqq \widetilde
  B_{k,\widetilde \tau_k}\) is distributed as \(X_k\) given
  \(\widetilde{\mathcal F}_{k}.\)

  Since the stopped process \((\widetilde B_{k,t\wedge \widetilde
    \tau_k})_{t\in[0,\infty)}\) is continuous and eventually constant,
  the pair \(((\widetilde B_{k,t\wedge \widetilde
    \tau_k})_{t\in[0,\infty)},\widetilde \tau_k)\) takes values in a
  Polish space. We can thus define the regular conditional
  distribution \(\mathbb Q_k(x)\) of \(((\widetilde
  B_{k,t\wedge\widetilde \tau_k})_{t\in[0,\infty)},\widetilde
  \tau_k)\) given \(\widetilde X_k = x\) and \(\widetilde{\mathcal
    F}_{k}.\) On a further-extended probability space, we can then
  generate a pair \(((B_{k,t\wedge \tau_k})_{t\in[0,\infty)},\tau_k)\)
  with distribution \(\mathbb Q_k(X_k)\) given \(\widetilde{\mathcal
    F}_{k}\) and \(X_k,\) independent of \(\mathcal F\) given
  \(\mathcal F_{k+1}.\)

  We deduce that the triplet \(((B_{k,t\wedge
    \tau_k})_{t\in[0,\infty)},\tau_k,X_k)\) is distributed as the
  triplet \(((\widetilde B_{k,t\wedge \widetilde
    \tau_k})_{t\in[0,\infty)},\widetilde \tau_k,\widetilde X_k)\)
  given \(\widetilde{\mathcal F}_{k},\) and hence \(B_{k,t \wedge
    \tau_k}\) and \(\tau_k\) satisfy the conditions of our claim. It
  remains to define \(B_{k,t}\) for \(t > \tau_k;\) we set
  \[B_{k,t+\tau_k} \coloneqq B_{k,\tau_k} + B_{k,t}', \qquad t \ge
  0,\]
  for an independent Brownian motion \(B_{k,t}'.\) We then conclude
  that \(B_{k,t}\) and \(\tau_k\) satisfy the conditions of our claim;
  by induction, the claim thus holds for \(k = n.\)

  Next, we will show we can realise the sums \(\tsum_{i=0}^{n-1}
  c_iX_i\) as integrals against a common Brownian motion. Define a
  process
  \[B_t \coloneqq \tsum_{j=0}^{n-2} B_{j,T(j,t) \wedge \tau_j} + B_{n-1,T(n-1,t)},\]
  where the variables
  \[T(j,t) \coloneqq 0 \vee (t - \tsum_{i=0}^{j-1} \tau_i).\]
  We will show that \(B_t\) is a Brownian motion with respect to a
  suitable filtration \(\mathcal G_t,\) and that the sums
  \(\tsum_{i=0}^{n-1} c_iX_i\) can be written as stochastic integrals
  against \(B_t.\)

  For fixed \(j = 0, \dots, n-1,\) the \(\sigma\)-algebras
  \[\widetilde{\mathcal G}_{j,t} \coloneqq \sigma(\widetilde{\mathcal
    F}_{j}, \mathcal G_{j,t})\]
  form a filtration in \(t \ge 0,\) and the variables \(T(j,t)\) are
  \(\widetilde{\mathcal G}_{j,t}\)-stopping times.  For fixed \(t \ge
  0,\) we can thus define the \(\sigma\)-algebras
  \(\widetilde{\mathcal G}_{j,T(j,t)},\) which form a filtration in
  \(j = 0, \dots, n-1,\) and the variables
  \[j(t) \coloneqq \tmax\{j=0, \dots, n-1:\tsum_{i=0}^{j-1} \tau_i \le
  t\},\]
  which are \(\widetilde{\mathcal G}_{j,T(j,t)}\)-stopping times.

  We can then define the \(\sigma\)-algebras
  \[\mathcal G_t \coloneqq \widetilde{\mathcal G}_{j(t),T(j(t),t)},\]
  which form a filtration in \(t,\) and check that the process \(B_t\)
  is a \(\mathcal G_t\)-Brownian motion. We conclude that given
  \(\mathcal F_{i}\)-measurable variables \(c_i,\) the sums
  \begin{equation}
    \label{eq:cx}
    \tsum_{i=0}^{n-1} c_iX_i = \tint_0^\infty f_c(t)\,dB_t,
  \end{equation}
  where the \(\mathcal G_t\)-predictable integrands
  \[f_c(t) \coloneqq \tsum_{j=0}^{n-1} c_j 1_{(0, \tau_j]}(T(j,t)).\]

  In part~\eqref{it:gd}, we consider the case \(c_i = 1,\) and obtain
  \[\tsum_{i=0}^{n-1}X_i = B_\nu, \qquad \nu \coloneqq \tsum_{i=0}^{n-1} \tau_i.\]
  Defining the
  random variables
  \[\xi \coloneqq B_1, \qquad \eta \coloneqq B_\nu - B_1,\]
  we then have \(\tsum_{i=0}^{n-1}X_i = \xi + \eta,\) and \(\xi \sim
  N(0,1)\) given \(\mathcal F_0.\)

  Furthermore, using Burkholder-Davis-Gundy, we have
  \[\E[\abs{\eta}^{4\kappa} \mid \F_0] = O(1) \E[\abs{\nu - 1}^{2 \kappa} \mid \F_0],\]
  while using \autoref{lem:exp}\eqref{it:ce},
  \[\E[\exp(u\eta - \tfrac12 u^2 \abs{\nu - 1}) \mid
  \mathcal F_0] \le 1.\]
  It thus remains to bound the distance of \(\nu\) from 1.

  For \(j = 0, \dots, n,\) define the \(\widetilde{\mathcal
    F}_j\)-martingale
  \[V_{j} \coloneqq \tsum_{i=0}^{j-1} (\tau_i - \E[\tau_i \mid
  \widetilde{\mathcal F}_{i}]),\]
  and the total mean
  \[\overline \nu \coloneqq
  \tsum_{i=0}^{n-1} \E[\tau_i \mid \widetilde{\mathcal F}_{i}].\]
  We then have
  \[\abs{\nu - 1} \le \abs{V_n} + \abs{\overline \nu - 1};\]
  we will show that both terms on the right-hand side are small.

  We first obtain
  \begin{align}
    \notag \E[\abs{V_n}^{2\kappa}\mid\mathcal F_0]
    &=
      O(1)\E[(\tsum_{i=0}^{n-1} \abs{\tau_i - \E[\tau_i\mid
      \widetilde{\mathcal F}_{i}]}^2)^{\kappa}\mid\mathcal F_0],\\
    \intertext{by Burkholder-Davis-Gundy,}
    \notag &=
      O(n^{\kappa-1})\tsum_{i=0}^{n-1}\E[\abs{\tau_i -
      \E[\tau_i\mid\widetilde{\mathcal
      F}_{i}]}^{2\kappa}\mid\mathcal F_0],\\
    \intertext{by Jensen's inequality,}
    \notag &=
      O(n^{\kappa-1})\tsum_{i=0}^{n-1}\E[\abs{\tau_i}^{2\kappa}
      \mid\mathcal F_0]\\
    \notag &=
      O(n^{\kappa-1})\tsum_{i=0}^{n-1}\E[\abs{X_i}^{4\kappa}\mid\mathcal F_0],
      \intertext{by Burkholder-Davis-Gundy and Doob's martingale inequality,}
    \label{eq:nm} &= O(n^{-\kappa}).
  \end{align}

  We also have that
  \begin{align*}
    \overline \nu
    &= \tsum_{i=0}^{n-1}\E[X_{i}^2 \mid \widetilde{\mathcal F}_{i}],\\
    \intertext{by Ito's isometry,}
    &= \tsum_{i=0}^{n-1}\E[X_i^2 \mid \mathcal F_{i}],
  \end{align*}
  as the \(B_{j,t}\) are independent of \(\mathcal F\) given
    \(\mathcal F_{j+1}.\) We deduce that
  \[\E[\abs{\nu - 1}^{2\kappa} \mid \mathcal F_0] = O(n^{-\kappa}),\]
  as required.

  In part~\eqref{it:em}, we again apply Burkholder-Davis-Gundy and
  \autoref{lem:exp}\eqref{it:ce} to the sums~\eqref{eq:cx}.  We claim
  that
  \[\tsup_c \tint_0^\infty f_c^2(t)\,dt \le A + M,\]
  for terms \(A\) and \(M\) as in the statement of the
  \hyperref[lem:was]{Lemma}, so
  \[\tsup_c \E[\abs{\upsilon_c}^{4\kappa}\mid\F_0] = O(1),\] and
  \[\tsup_c \E[\exp(u \upsilon_c - \tfrac12 u^2 (A + M))
  \mid \mathcal F_0] \le 1.\]
  It thus remains to prove the claim.

  As before, we have
  \[\abs{\nu} \le \abs{V_n} + \abs{\overline \nu},\]
  and
  \begin{align*}
    \overline \nu &= \tsum_{i=0}^{n-1}\E[X_i^2 \mid \mathcal F_{i}]\\
    &= O(n^{1-1/2\kappa})(\tsum_{i=0}^{n-1} \E[\abs{X_i}^{4\kappa} \mid \mathcal
      F_i])^{1/2\kappa},\\
    \intertext{by Jensen's inequality,}
    &=O(1).
  \end{align*}
  For any random variables \(c_i
  = O(1),\) we deduce that
  \[\tint_0^\infty f_c^2(t)\,dt = \tsum_{i=0}^{n-1} c_i^2 \tau_i =
  O(\nu) = O(1 + \abs{V_n}).\]
  The claim thus holds for terms \(A = O(1),\) \(M = O(\abs{V_n}),\)
  and we further have that \(M\) satisfies~\eqref{eq:ms},
  using~\eqref{eq:nm}.
\end{proof}

We next prove our result on combining exponential moment bounds.

\begin{proof}[Proof of \autoref{lem:tec}]
  We first note that by rescaling the \(X_i,\) we may assume \(r_n =
  1.\) Then on an extended probability space, let \(\xi\) be standard
  Gaussian, independent of \(\mathcal F.\) For any \(R > 0,\) we have
  \begin{align*}
    \mel \E[\exp(X_i^2 / 4R)1_{M \le R} ]\\
    &= \E[\E[\exp((2R)^{-1/2} X_i \xi)1_{M \le R}\mid\mathcal F]]\\
    &\le \E[\exp(\xi^2 / 4) \E[\exp((2R)^{-1/2} \xi X_i - \tfrac12
    (2R)^{-1} \xi^2 M)1_{M \le R} \mid \xi]]\\
    &= O(1) \E[\exp(\xi^2 / 4) ]\\
    &= O(1).
  \end{align*}
  We deduce that, for any \(R > 0,\)
  \[\E[\tmax_i\exp(X_i^2 / 4R)1_{M \le R}] \le
  \tsum_i \E[\exp(X_i^2 / 4R)1_{M \le R}] = O(n),\]
  so \[\tmax_i \,\abs{X_i}1_{M \le R} = O_p(R^{1/2} \log(n)^{1/2}).\]
  Since \(M =
  O_p(1),\) we conclude that \(\tmax_i \,\abs{X_i} =
  O_p(\log(n)^{1/2}).\)
\end{proof}

We continue with a proof of our moment bounds on the \(Y_i\) and
\(Z_i(\theta).\)

\begin{proof}[Proof of \autoref{lem:obs}]
  To show part~\eqref{it:yl}, we note that the functions \(\mu\) and
  \(\sigma^2\) are locally Lipschitz, \(\sigma^2\) is positive, and
  \(\theta,\) \(t_{i}\) and \(\widehat X_i\) are bounded. We may
  thus restrict the functions \(\mu\) and \(\sigma^2\) to a compact
  set, on which \(\mu\) and \(\sigma^2\) are \(C^1,\) and
  \(1/\sigma^2\) is bounded. We deduce that part~\eqref{it:yl} holds;
  by a similar argument, part~\eqref{it:sl} holds also.

  To show part~\eqref{it:zm}, we then have
  \[\E[\abs{Z_i(\theta)}^{4 + \varepsilon} \mid \mathcal
  F_{t_{i}}] = O(1)\E[1 + \abs{Y_i}^{4 + \varepsilon} \mid \mathcal
  F_{t_{i}}] = O(1),\] and
  \[Z_i(\theta) = (Y_i - \mu(\theta,t_{i},
  X_{t_{i}}))/\sigma(\theta, t_{i},X_{t_{i}}) + \gamma_i,\] for a term
  \[\gamma_i = O(1)(1 + \abs{Y_i})\norm{\widehat X_i - X_{t_{i}}}.\]

  Using Cauchy-Schwarz, we obtain
  \begin{align*}
    \E[\gamma_i \mid \mathcal F_{t_{i}}] &= O(n^{-1/2}),\\
    \E[\gamma_i^2 \mid \mathcal F_{t_{i}}] &= O(n^{-1/2}).
  \end{align*}
  We conclude that
  \begin{align*}
    \E[Z_i(\theta) \mid \mathcal F_{t_{i}}] &=
    S_{t_{i}}(\theta) + \E[\gamma_i \mid \F_{t_i}],\\
    &= S_{t_{i}}(\theta) + O(n^{-1/2}),
  \end{align*}
  and under \(H_0,\) using Cauchy-Schwarz, also
  \begin{align*}
    \E[Z_i(\theta_0)^2 \mid \mathcal F_{t_{i}}] 
    &= 1 + O(1)(\E[\gamma_i^2 \mid \F_{t_i}]^{1/2} + \E[\gamma_i^2 \mid \F_{t_i}]),\\
    &= 1 + O(n^{-1/4}).
  \end{align*}
  
  To show part~\eqref{it:ys}, we define the random variables
  \[R_k \coloneqq m^{-1} \tsum_{i=ns_k}^{ns_{k+1}-1} ((Y_i-\E[Y_i \mid
  \mathcal F_{t_i}])^2 - \Var[Y_i \mid \mathcal F_{t_i}]),\]
  where \(m \coloneqq n(s_{k+1}-s_k).\) \(R_k\) is then an average of
  \(m\) terms of a martingale difference sequence, whose conditional
  variances are bounded. We deduce that
  \[\E[R_k^2] = O(m^{-1}) = O(n^{-1/2}),\]
  and so
  \begin{align*}
    \E[(\tmax_k n^{-1/2} \tsum_{i=ns_k}^{ns_{k+1}-1} Y_i^2)^2]
    &= O(1)\E[1 + \tmax_k R_k^2]\\
    &= O(1)(1 + \tsum_k \E[R_k^2])\\
    &= O(1).
  \end{align*}
  The desired result follows.
\end{proof}

Finally, we prove our result on the behaviour of the processes
\(S_t(\theta)\) under \autoref{ass:ito}.

\begin{proof}[Proof of \autoref{lem:smt}]
  We begin by defining the processes \(\widetilde S_t(\theta),\)
  \(\overline S_t(\theta),\) and times \(\tau_i.\)
  We can split the process \(\mu_t\) into parts
  \[\mu_t = \widetilde \mu_t + \overline \mu_t,\]
  where \(\widetilde \mu_t\) is a process with jumps of size at most
  \(n^{-1/4}\log(n)^{1/2},\) and \(\overline \mu_t\) is an orthogonal
  pure-jump process with jumps of size at least
  \(n^{-1/4}\log(n)^{1/2}.\) We can similarly define terms
  \(\widetilde X_t,\) \(\overline X_t.\)

  Let \(\tau_1 < \dots < \tau_{N-1}\) denote the times at which
  \(\overline \mu_t\) or \(\overline X_t\) jump, and set \(\tau_0 \coloneqq 0,\)
  \(\tau_N \coloneqq 1.\) We can then decompose the processes
  \begin{equation}
    \label{eq:sd}
    S_t(\theta) = \widetilde S_t(\theta) + \overline S_t(\theta),
  \end{equation}
  where
  \[\overline S_t(\theta) \coloneqq \tsum_{\tau_i \le t} \Delta
  S_{\tau_i}(\theta),\]
  letting \(\Delta S_{t}(\theta)\) denote the jump in \(S_t(\theta)\)
  at time \(t,\) and \(\widetilde S_t(\theta)\) is then defined by~\eqref{eq:sd}.

  To prove part~\eqref{it:hs}, we first note that the model functions
  \(\mu\) and \(\sigma^2\) are continuously differentiable in \(t,\)
  twice continuously differentiable in \(X,\) and \(\sigma^2\) is
  positive. By It\=o's lemma, we can thus write
  \[d\widetilde S_t(\theta) = \widetilde b_t(\theta)\,dt + \widetilde
  c_t(\theta)^T\,dB_t + \tint_\R \widetilde
  f_t(x,\theta)\,(\lambda(dx, dt) - dx\,dt),\]
  for integrators \(B_t\) and \(\lambda(dx,dt)\) given by
  \autoref{ass:ito}, predictable processes \(\widetilde b_t(\theta),\,
  \widetilde c_t(\theta),\) and predictable functions \(\widetilde
  f_t(x,\theta).\) Since \(\theta,\) \(t,\) \(\mu_t\) and \(X_t\) are
  bounded, we also have \(\widetilde b_t(\theta),\,\widetilde
  c_t(\theta) = O(1),\) \(\widetilde f_t(x,\theta) = O(
  n^{-1/4}\log(n)^{1/2}),\) and \(\tint_\R \abs{\widetilde
    f_t(x,\theta)}\,dx = O(1).\)

  To bound the size of changes in \(\widetilde S_t(\theta),\) we will consider
  the variables
  \[M_k(\theta) \coloneqq \tsup_{t \in I_k} \abs{\widetilde S_t(\theta) -
    \widetilde S_{2^{-J}k}(\theta)},\]
  where the intervals \(I_k \coloneqq 2^{-J}[k, k+1].\) We have
  \[M_k(\theta) \le \tsum_{i=0}^{q+2} M_{k,i}(\theta),\]
  for terms
  \[
  M_{k,i}(\theta) \coloneqq
  \begin{cases}
    \tsup_{t \in I_k}
    \abs{\tint_{2^{-J}k}^t \widetilde b_t(\theta)\,dt},& i = 0,\\
    \tsup_{t \in I_k} \abs{\tint_{2^{-J}k}^t \widetilde
      c_{i,t}(\theta)\,dB_{i,t}},&
    i = 1, \dots, q+1,\\
    \tsup_{t \in I_k} \abs{\tint_{2^{-J}k}^t \tint_\R \widetilde f_t(x,
      \theta)\, (\lambda(dx,dt) - dx\,dt)},& i = q+2.
  \end{cases}\]
  
  In each case \(i = 0, \dots, q+2,\) we will bound the maximum
  \[\widetilde M_{i} \coloneqq \tmax_{k,\theta \in \underline \Theta_n} M_{k,i}(\theta).\]
  From the definitions, we have
  \[\widetilde M_0 = O(n^{-1/2}).\]
  For \(i = 1, \dots, q+1,\) we use \autoref{lem:exp}\eqref{it:se},
  obtaining that 
  \[\E[\exp(un^{1/4}M_{k,i}(\theta) - u^2R)] = O(1),\]
  for all \(u \in \R,\) and some fixed \(R > 0.\) Using
  \autoref{lem:tec}, we deduce that
  \[\widetilde M_i = O_p(n^{-1/4}\log(n)^{1/2}).\]

  Finally, using \autoref{lem:exp}\eqref{it:de}, for small enough
  \(\varepsilon' > 0\) we have
  \[\E[\exp(\varepsilon'n^{1/4}\log(n)^{1/2}M_{k,q+2}(\theta))] = O(1).\]
  We deduce that
  \begin{align*}
    \E[\exp(\varepsilon' n^{1/4}\log(n)^{1/2}\widetilde M_{q+2})]
    &\le \tsum_{k,\theta \in
    \underline
    \Theta_n}\E[\exp(\varepsilon' n^{1/4}\log(n)^{1/2}M_{k,q+2}(\theta))]\\
    &=
  O(n^{\kappa+1/2}),
  \end{align*}
  and so \[\widetilde M_{q+2} = 
  O_p(n^{-1/4}\log(n)^{1/2}).\]
  We conclude that the random variable
  \[
  M \coloneqq \tmax_{k,\theta \in \underline \Theta_n} M_k(\theta) \le
  \tsum_{i=0}^{q+2} \widetilde M_i = O_p(n^{-1/4}\log(n)^{1/2}).\]
  Part~\eqref{it:hs} then follows trivially.

  To show part~\eqref{it:sj}, for \(s, t \in [0,1],\) define the
  translated processes
  \[\widetilde S_s^{(t)}(\theta) \coloneqq \widetilde S_s(\theta) -
  \widetilde S_t(\theta).\] We then have
  \begin{align*}
    R_J\widetilde S_t(\theta)
    &= R_J\widetilde S_t^{(t)}(\theta),\\
    \intertext{since wavelets are orthogonal to constant functions,}
    &= -P_J\widetilde S_t^{(t)}(\theta),\\
    \intertext{since \(\widetilde S_t^{(t)}(\theta) = 0,\)}
    &= -\tsum_k \varphi_{J,k}(t) \tint_0^1 \varphi_{J,k}(s) (\widetilde S_s(\theta) -
      \widetilde S_t(\theta))\,ds\\
    &= O(M),
  \end{align*}
  using the compact support of \(\varphi.\) The desired result
  follows.

  To show part~\eqref{it:lj}, we first note that the processes
  \(\overline S_t(\theta)\) are constant on the intervals \([\tau_i,
  \tau_{i+1})\) and \([\tau_{N-1},\tau_N].\) Setting \(\tau_i = 1\)
  for \(i > N,\) we then have
  \begin{align*}
    \mel\P[\exists\,i \le n^{1/4} :\tau_i \le 1 - \delta_n,\,\tau_{i+1}<\tau_i +
    \delta_n]\\
    &\le \tsum_{i=0}^{\lfloor n^{1/4} \rfloor} \P[\tau_i \le 1 - \delta_n]\P[\tau_{i+1} <
      \tau_i+\delta_n \mid \tau_i \le 1 - \delta_n]\\
    &=o(n^{-1/4})\tsum_{i=0}^{\lfloor n^{1/4} \rfloor} \P[\tau_i \le 1 - \delta_n],\\
    \intertext{using \autoref{ass:ito}, and that \(\tau_i\) is a stopping time,}
    &=o(1).
  \end{align*}
  Similarly, we have 
  \[\P[\exists\,i: \tau_i \in (1-\delta_n,1)],\, \P[N > n^{1/4}] = o(1).\]
  The desired result follows.
\end{proof}


\bibliographystyle{abbrvnat}
{\footnotesize \bibliography{sdgft}}

\end{document}